\newtheorem{theorem}{Theorem}[section]
\newtheorem{lemma}[theorem]{Lemma}
\newtheorem{proposition}[theorem]{Proposition}
\newtheorem{corollary}[theorem]{Corollary}
\newtheorem{claim}{Claim}
\theoremstyle{definition}
\newtheorem*{definition}{Definition}
\newtheorem*{remark}{Remark}
\title
[Parametrization of the $p$-Weil--Petersson curves]
{Parametrization of the $p$-Weil--Petersson curves: 
holomorphic dependence}
\author[H. Wei]{Huaying Wei} 
\address{Department of Mathematics and Statistics, Jiangsu Normal University \endgraf Xuzhou 221116, PR China} 
\curraddr{Department of Mathematics, School of Education, Waseda University \endgraf
Shinjuku, Tokyo 169-8050, Japan}
\email{hywei@jsnu.edu.cn} 
\author[K. Matsuzaki]{Katsuhiko Matsuzaki}
\address{Department of Mathematics, School of Education, Waseda University \endgraf
Shinjuku, Tokyo 169-8050, Japan}
\email{matsuzak@waseda.jp}
\subjclass[2020]{Primary 32G15, 30C62, 30H25; Secondary 26A46, 46G20, 30H35}
\keywords{Weil--Petersson Teichm\"uller space, integrable Beltrami coefficients, Bers simultaneous uniformization, pre-Schwarzian derivative model,
analytic Besov space, Weil--Petersson curves, arc-length parametrization}
\thanks{Research supported by 
Japan Society for the Promotion of Science (KAKENHI 18H01125 and 21F20027).}
\begin{document}

\maketitle

\begin{abstract}
Similarly to the Bers simultaneous uniformization,
the product of the $p$-Weil--Petersson Teichm\"uller spaces for $p \geq 1$
provides the coordinates for the space of $p$-Weil--Petersson embeddings $\gamma$ 
of the real line $\mathbb R$ into the complex plane $\mathbb C$. 
We prove the biholomorphic correspondence from this space to 
the $p$-Besov space of $u=\log \gamma'$ on $\mathbb R$ for $p>1$. 
From this fundamental result, several consequences follow immediately which 
clarify the analytic structures concerning parameter spaces of $p$-Weil--Petersson curves.
In particular, it follows that the correspondence 
of the Riemann mapping parameters
to the arc-length parameters keeping the images of curves
is a homeomorphism with bi-real-analytic dependence of change of parameters. This is a counterpart
to a classical theorem of Coifman and Meyer for chord-arc curves.
\end{abstract}

\section{Introduction}

The Weil--Petersson metric was originally introduced in the study of Teichm\"uller spaces of
Riemann surfaces. 
Besides this direction of researches of long history,
a study on the subspace $T_2$ of the universal Teichm\"uller space $T$ that admits the Weil--Petersson metric
was initiated by Cui \cite{Cu}, and later the Hilbert manifold structure of $T_2 \subset T$ and the curvatures 
of the Weil--Petersson metric were further investigated
by Takhtajan and Teo \cite{TT}.
Based on these fundamental work, Shen and his coauthors have developed
complex-analytic theories of the subspace $T_2$, which is nowadays called the
Weil--Petersson Teichm\"uller space. As Beltrami coefficients
representing elements of $T_2$ are square integrable with respect to the hyperbolic metric,
this is also called the integrable Teichm\"uller space. 

The theme of this paper is in complex analytic aspects of
the Weil--Petersson Teichm\"uller space.
In the series of papers, Shen \cite{Sh18} first characterized the Weil--Petersson class $W_2$,
which consists of
quasisymmetric homeomorphisms 
representing elements of $T_2$, without using quasiconformal extension. 
It was given in terms of
the fractional dimensional Sobolev space $H^{1/2}_{\mathbb R}$ of real-valued functions. 
Then, Shen and Tang \cite{ST}
regarded $H^{1/2}_{\mathbb R}$ as a new parameter space for $T_2$ which 
is real-analytically equivalent to the original complex Hilbert structure. In this work, 
they considered the Weil--Petersson class $W_2$ on the real line $\mathbb R$ and applied the arguments of 
chord-arc curves induced by BMO functions in Semmes \cite{Se}.
Further, Shen and Wu \cite{SW} considered the Weil--Petersson curves in the complex plane $\mathbb C$,
which is the complex generalization of the elements in $W_2$, and proved that
the Riemann mappings onto the domains defined by Weil--Petersson curves move continuously.

Recently, Bishop \cite{Bi} has accomplished a comprehensive study on the Weil--Petersson curves.
He collected about twenty characterizations of Weil--Petersson curves from various viewpoints of
analysis and geometry
including the complex analytic methods as we mentioned above.  
For instance, a planar geometric
characterization of a bounded Weil--Petersson curve $\Gamma$ is
given by the rate at which the perimeter of the inscribed $2^n$-polygon for $\Gamma$ converges to that of $\Gamma$.
Other characterizations are given by 
certain measurement of coarse smoothness for closed
rectifiable curves $\gamma$,
the M\"obius energy defined on $\gamma$ similarly to knots, hyperbolic geometry of convex cores spanned by $\gamma$,
the curvatures of minimal surfaces with the boundary $\gamma$, and so on.
One can find other related work in the references therein including Shanon and Mumford \cite{SM} on 2D-shape mapping,
and Wang \cite{Wa} arising from the SLE theory.

In this present paper, we give a foundation of the parametrization of the space of
Weil--Petersson curves in the framework of the quasiconformal Teichm\"uller theory.
We represent this space as the product of the Weil--Petersson Teichm\"uller spaces in three ways
and prove analytic and topological correspondences of those factors.
In this fashion, we can understand the structure of the space of
Weil--Petersson curves clearly and easily. In particular, we can reprove and extend several known results
that have important applications as immediate consequences from our fundamental theorems.
We also develop those arguments in the generalization to $p$-Weil--Petersson curves for $p>1$.

For $p \geq 1$, let ${\mathcal M}_p(\mathbb U)$ be the set of Beltrami coefficients
that are $p$-integrable with respect to the hyperbolic metric on the upper half-plane $\mathbb U \subset \mathbb C$.
The $p$-Weil--Petersson Teichm\"uller space $T_p(\mathbb U)$ is the quotient
space of ${\mathcal M}_p(\mathbb U)$ by the Teichm\"uller equivalence. 
Precise definitions are in Section 2 .
It has been proved that $T_p(\mathbb U)$ possesses a complex Banach manifold structure
via the Bers embedding. On the lower half-plane $\mathbb L \subset \mathbb C$, the corresponding spaces ${\mathcal M}_p(\mathbb L)$ and
$T_p(\mathbb L)$ are defined.
The $p$-Weil--Petersson class $W_p$ is the set of
quasisymmetric homeomorphisms $f:\mathbb R \to \mathbb R$ that extend
quasiconformally to $\mathbb U$ (and to $\mathbb L$) with their complex dilatations in ${\mathcal M}_p(\mathbb U)$ (and in ${\mathcal M}_p(\mathbb L)$).
Then, the element $f$ in $W_p$ for $p>1$ can be characterized by the property that
$f$ is locally absolutely continuous and $\log f'$ belongs to 
the $p$-Besov space $B_p^{\mathbb R}(\mathbb R)$ of real-valued functions, which coincides with $H^{1/2}_{\mathbb R}(\mathbb R)$ for $p=2$.
In fact, in our paper \cite{WM-3} generalizing \cite{ST}, it was proved that if a quasisymmetric homeomorphism $f$ of $\mathbb R$ is locally absolutely continuous
and $\log f'$ is in $B_p^{\mathbb R}(\mathbb R)$, then the variant of the Beurling--Ahlfors extension by the heat kernel
introduced by Fefferman, Kenig and Pipher \cite{FKP}
yields a quasiconformal homeomorphism of $\mathbb U$ whose complex dilatation is in ${\mathcal M}_p(\mathbb U)$.
We will elaborate these concepts in Section 3 as well as those introduced next.

A $p$-Weil--Petersson embedding $\gamma:\mathbb R \to \mathbb C$ is the restriction of
a quasiconformal homeomorphism of $\mathbb C$ whose complex dilatations on $\mathbb U$ and $\mathbb L$
belong to ${\mathcal M}_p(\mathbb U)$ and ${\mathcal M}_p(\mathbb L)$, respectively.
Due to this definition of a curve as a continuous mapping not only its image,
the space ${\rm WPC}_p$ of all normalized $p$-Weil--Petersson embeddings for $p \geq 1$ can be 
parametrized in the same spirit of the Bers simultaneous uniformization. Namely,
${\rm WPC}_p$ is identified with the product of the $p$-Weil--Petersson Teichm\"uller spaces
$T_p(\mathbb U) \times T_p(\mathbb L)$. 
Although this natural viewpoint has been missed in the literature, we emphasize in this paper that
this can significantly clarify the theory of ${\rm WPC}_p$. In our recent paper \cite{WM-0},
we carry out similar arguments for the space of chord-arc curves and succeed in obtaining 
several interesting consequences.

In Theorems \ref{holo} and \ref{biholo}, we prove the following basic result on this parametrization of ${\rm WPC}_p$.
The $p$-Besov space $B_p(\mathbb R)$ is a complex Banach space of complex-valued functions which contains $B_p^{\mathbb R}(\mathbb R)$
as the real subspace. The crucial point is not only to give the characterization of elements in ${\rm WPC}_p$ but also
to show that this correspondence is biholomorphic. This is a new result even in the case of $p=2$.

\begin{theorem}\label{thm1}
For any $p$-Weil--Petersson embedding $\gamma:\mathbb R \to \mathbb C$, 
the logarithm of its derivative $\log \gamma'$ belongs to
the $p$-Besov space $B_p(\mathbb R)$ for $p>1$. Moreover, this correspondence 
$$
L:{\rm WPC}_p \cong T_p(\mathbb U) \times T_p(\mathbb L) \to B_p(\mathbb R)
$$
is a biholomorphic homeomorphism onto the image.
\end{theorem}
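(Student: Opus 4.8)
The plan is to check, in turn, that $L$ is well defined with good a priori bounds, that $L$ is injective, that $L$ is holomorphic, and that $L^{-1}$ is holomorphic on $L({\rm WPC}_p)$. For well-definedness I would use the Riemann-map decomposition behind the identification ${\rm WPC}_p\cong T_p(\mathbb U)\times T_p(\mathbb L)$: writing $\gamma=F|_{\mathbb R}$ with $\mu_F|_{\mathbb U}\in\mathcal M_p(\mathbb U)$ and $\mu_F|_{\mathbb L}\in\mathcal M_p(\mathbb L)$, letting $\psi^+\colon\mathbb U\to\Omega^+:=F(\mathbb U)$ be the normalized conformal map and $w^+:=(\psi^+)^{-1}\circ F|_{\mathbb R}\in W_p$, one has $\gamma=\psi^+\circ w^+$ on $\mathbb R$, so that
\[
\log\gamma'=\bigl(\log(\psi^+)'\bigr)\big|_{\mathbb R}\circ w^+\;+\;\log (w^+)'.
\]
The second term belongs to $B_p^{\mathbb R}(\mathbb R)$ by the characterization of $W_p$ recalled in the introduction; for the first I would invoke the pre-Schwarzian (analytic Besov) model of the Weil--Petersson Teichm\"uller space, giving $\log(\psi^+)'$ in the analytic Besov space on $\mathbb U$ with bounded boundary trace in $B_p(\mathbb R)$, together with the invariance of $B_p(\mathbb R)$ under precomposition by homeomorphisms of $W_p$. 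Keeping track of the norm inequalities then shows that $\|\log\gamma'\|_{B_p}$ stays bounded on sets bounded in $T_p(\mathbb U)\times T_p(\mathbb L)$, a local boundedness that the holomorphy step will use. Injectivity is immediate, since $\log\gamma'=\log\tilde\gamma'$ forces $\gamma=a\tilde\gamma+b$ and the normalization built into ${\rm WPC}_p$ gives $a=1$, $b=0$.

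For holomorphy, since the Bers projection $\mathcal M_p(\mathbb U)\times\mathcal M_p(\mathbb L)\to T_p(\mathbb U)\times T_p(\mathbb L)$ is a holomorphic split submersion, it suffices to show that $(\mu,\nu)\mapsto\log\gamma_{\mu,\nu}'$ is holomorphic on the open set $\mathcal M_p(\mathbb U)\times\mathcal M_p(\mathbb L)$. I would \emph{not} try to do this through the displayed formula: each of its two terms depends only real-analytically on $(\mu,\nu)$ --- $\psi^+$ and $w^+$ are built using the reflected Beltrami coefficient, which varies anti-holomorphically --- and it is only their sum that is holomorphic, which is precisely the cancellation underlying Bers simultaneous uniformization in this setting. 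Instead I would argue directly from the Ahlfors--Bers theorem: $(\mu,\nu)\mapsto F_{\mu,\nu}(z)$ is holomorphic and locally bounded for each $z$, hence $(\mu,\nu)\mapsto\gamma_{\mu,\nu}=F_{\mu,\nu}|_{\mathbb R}$, and then its distributional derivative $\gamma_{\mu,\nu}'$, depends holomorphically on $(\mu,\nu)$ as a map into $\mathcal D'(\mathbb R)$.

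The crux is to promote this to holomorphy of $(\mu,\nu)\mapsto\log\gamma_{\mu,\nu}'$ as a map into $B_p(\mathbb R)$. Using that a Gateaux-holomorphic, locally bounded map between complex Banach spaces is holomorphic, I would reduce to a one-parameter holomorphic family $(\mu_t,\nu_t)$ and show that $t\mapsto\log\gamma_t'$ is continuous into $B_p(\mathbb R)$ and that its derivative $\tfrac{d}{dt}\log\gamma_t'=\dot\gamma_t'/\gamma_t'$, computed from the Ahlfors--Bers variational formula, exists in $B_p(\mathbb R)$; the local boundedness in $B_p$ from the first step and the reflexivity of $B_p(\mathbb R)$ for $p>1$ would be used to pass to the limit in the intermediate estimates. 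Carrying out the $B_p$-norm estimate for $\dot\gamma_t'/\gamma_t'$ --- which is exactly where the cancellation that makes $\log\gamma_t'$ move holomorphically rather than merely real-analytically has to be quantified --- is the step I expect to be the main obstacle, since $B_p(\mathbb R)$ is not stable under $v\mapsto e^v$ and the logarithm cannot be handled by soft functional calculus.

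Finally, for biholomorphy I would show that $DL$ is, at every point, a linear isomorphism of the tangent space onto $B_p(\mathbb R)$. At the basepoint this is the assertion that the boundary traces of the analytic Besov spaces on $\mathbb U$ and on $\mathbb L$ span $B_p(\mathbb R)$ with trivial intersection --- the Riesz-projection, or Hardy-type, splitting --- while the linearized reparametrization contributes only to higher order; at a general point $\gamma$ the same computation uses the Riesz-type splitting of $B_p(\mathbb R)$ adapted, via the conformal maps onto $\Omega^\pm$, to the curve $\gamma$. Granted this, the Banach inverse function theorem makes $L$ a local biholomorphism, and together with the injectivity already established, $L$ is a biholomorphic homeomorphism of ${\rm WPC}_p$ onto the open subset $L({\rm WPC}_p)$ of $B_p(\mathbb R)$, with $L^{-1}$ continuous and holomorphic; alternatively the continuity of $L^{-1}$ can be seen from a heat-kernel (Fefferman--Kenig--Pipher type) reconstruction of the quasiconformal extension from $\log\gamma'$.
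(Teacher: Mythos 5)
Your opening decomposition $\gamma=\psi^+\circ w^+$, $\log\gamma'=\log(\psi^+)'|_{\mathbb R}\circ w^++\log(w^+)'$, is exactly the paper's route to $\log\gamma'\in B_p(\mathbb R)$ (Theorem \ref{curve}), and injectivity via the normalization is also as in the paper. The first genuine gap is in the holomorphy step. You correctly observe that each term of the decomposition depends only real-analytically on the pair $(\mu_1,\mu_2)$ (the welding map is built from the reflected coefficient $\overline{\mu_1(\bar z)}$, which is anti-holomorphic in $\mu_1$), but you then abandon the decomposition entirely and propose a direct variational argument whose crux --- the $B_p$-estimate for $\dot\gamma_t'/\gamma_t'$ --- you explicitly leave as ``the main obstacle.'' That estimate is the whole content of the holomorphy claim, so as written this step is not a proof. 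What you are missing is that Hartogs' theorem for Banach spaces reduces the problem to \emph{separate} holomorphy, and for separate holomorphy the decomposition does work: with $[\mu_1]$ fixed, $f=w^+$ and hence the bounded linear operator $P_f\circ b$ are fixed, the push-forward $F_*$ by a bi-Lipschitz extension of $f$ is a biholomorphic automorphism of $\mathcal M_p(\mathbb L)$, and $\log\gamma'=P_f\circ b(\mathcal L_{H_{F_*\mu_2}})+\log f'$ is then a fixed bounded linear map applied to a holomorphic function of $[\mu_2]$; the other variable is handled by the symmetric decomposition. The cancellation you worry about never has to be quantified.

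The second gap is in the biholomorphy step. Reducing to ``$DL$ is a linear isomorphism at every point'' is a legitimate strategy, but at a general point the surjectivity of $DL$ is precisely the hard analytic fact: one must produce, from Besov data near $\log\gamma'$, a quasiconformal extension with $p$-integrable dilatation, i.e.\ a local holomorphic section of $L$. Your ``Riesz-type splitting of $B_p(\mathbb R)$ adapted to the curve'' and the claim that the linearized reparametrization ``contributes only to higher order'' are not substantiated, and the heat-kernel reconstruction you mention as an alternative only gives the extension of the welding homeomorphism, not of the embedding. The paper instead imports a local holomorphic inverse $\Psi_{iv}$ at arc-length points from Shen--Wu (built on Semmes's construction) and transports it to an arbitrary point $u+iv'$ by the affine automorphism $Q_u$ of $L({\rm WPC}_p)$ and the right translation $\widetilde R_{[\nu]}$ of the Bers coordinates; some such input (or an actual proof of surjectivity of $DL$ at the base point together with a genuine transitivity argument) is needed to close your argument.
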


The $p$-Weil--Petersson class $W_p$ of all normalized quasisymmetric homeomorphisms of $\mathbb R$ onto
itself is a real-analytic submanifold of ${\rm WPC}_p$ corresponding to the diagonal axis of the Bers coordinates
$T_p(\mathbb U) \times T_p(\mathbb L)$. All normalized $p$-Weil--Petersson embeddings that are induced by Riemann mappings
on $\mathbb U$ make a complex-analytic submanifold $RM_p$ of ${\rm WPC}_p$ corresponding to $\{[0]\} \times T_p(\mathbb L)$. 
Every $\gamma \in {\rm WPC}_p$ is represented uniquely as the reparametrization of $h \in RM_p$ by
$f \in W_p$. We set these correspondences as $f=\Pi(\gamma)$ and $h=\Phi(\gamma)$. In the Bers coordinates, these maps are defined by
$\Pi([\mu_1],[\mu_2])=([\mu_1],[\mu_1])$ and
$\Phi([\mu_1],[\mu_2])=([0],[\mu_2] \ast [\mu_1]^{-1})$. Here, we see that 
$\Phi$ is a continuous surjection
by the topological group property of $T_p$ (Theorem \ref{topgroup}). Thus, we have a homeomorphism
$$
(\Pi,\Phi):{\rm WPC}_p \to W_p \times RM_p.
$$
This is the second product structure of ${\rm WPC}_p$.

Moreover, let $IW_p$ be the space of all $p$-Weil--Petersson embeddings that have arc-length parametrizations.
Every $\gamma \in {\rm WPC}_p$ is represented uniquely as the reparametrization of its arc-length parametrization
$\gamma_0 \in IW_p$ by $f \in W_p$. 
In this way, ${\rm WPC}_p \cong W_p \times IW_p$. See Section 4 in more details.
Since the condition $\gamma \in IW_p$ is equivalent to the condition that $L(\gamma)$ is purely imaginary,
we have $IW_p=L^{-1}(iB_p^{\mathbb R}(\mathbb R)^\circ)$ and this is a real-analytic submanifold of ${\rm WPC}_p$.
Here, $iB_p^{\mathbb R}(\mathbb R)^\circ$ is the intersection of the open subset $L({\rm WPC}_p) \subset B_p(\mathbb R)$ with 
the real subspace $iB_p^{\mathbb R}(\mathbb R)$ consisting of purely imaginary functions. Clearly, 
$L(W_p)=B_p^{\mathbb R}(\mathbb R)$.
Then, there is a bijection $J$ from $B_p^{\mathbb R}(\mathbb R) \times iB_p^{\mathbb R}(\mathbb R)^\circ$ to
$L({\rm WPC}_p)$ such that this product structure is compatible with
$W_p \times IW_p$ on ${\rm WPC}_p$ under $J^{-1} \circ L$ (Lemma \ref{arclength}). 
Again by the topological group property of $T_p$,
$J$ is a homeomorphism (Theorem \ref{Jhomeo}) and thus we have a homeomorphism
$$
J^{-1} \circ L:{\rm WPC}_p \cong W_p \times IW_p\to B_p^{\mathbb R}(\mathbb R) \times iB_p^{\mathbb R}(\mathbb R)^\circ.
$$
This is the third product structure of ${\rm WPC}_p$.

By investigating these product structures,
we obtain topological correspondence between the factor subspaces of the products.
This in particular shows that these subspaces admit different analytic structures that are topologically equivalent to each other.
We develop those arguments in Section 6 and the results are summarized as follows.

\begin{theorem}\label{thm3}
The space ${\rm WPC}_p$ for $p>1$
admits two other product structures $W_p \times RM_p$ and $W_p \times IW_p$
that are homeomorphic to ${\rm WPC}_p$.
The fiber structures for the projections to the second factors in the both products
are the same and each fiber consists of the family of all normalized $p$-Weil--Petersson 
embeddings of the same image.
\end{theorem}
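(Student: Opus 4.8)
The plan is to assemble Theorem \ref{thm3} from the three product structures already described in the introduction, treating it essentially as a bookkeeping result that packages Theorems \ref{topgroup}, \ref{Jhomeo}, and the homeomorphism $(\Pi,\Phi)$ together with the fiber identification. First I would recall the two product maps: the homeomorphism $(\Pi,\Phi):{\rm WPC}_p \to W_p \times RM_p$ built from the Bers coordinates via $\Pi([\mu_1],[\mu_2])=([\mu_1],[\mu_1])$ and $\Phi([\mu_1],[\mu_2])=([0],[\mu_2]\ast[\mu_1]^{-1})$, whose continuity in the $\Phi$-component is exactly the content of the topological group property of $T_p$ (Theorem \ref{topgroup}); and the homeomorphism $J^{-1}\circ L:{\rm WPC}_p \to B_p^{\mathbb R}(\mathbb R)\times iB_p^{\mathbb R}(\mathbb R)^\circ$, which via Lemma \ref{arclength} is compatible with the splitting $W_p \times IW_p$, and which is a homeomorphism because $J$ is (Theorem \ref{Jhomeo}). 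That $L$ itself is a biholomorphic homeomorphism onto its image is Theorem \ref{thm1}. So the first half of the statement — that ${\rm WPC}_p$ admits the two product structures $W_p \times RM_p$ and $W_p \times IW_p$, both homeomorphic to ${\rm WPC}_p$ — follows directly by composing these known homeomorphisms; nothing new is needed beyond citing them in the right order.

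The substantive point is the claim about fibers. I would argue that for both products the projection to the second factor sends $\gamma$ to a representative of its image curve, and that the fiber over that representative is precisely the set of all normalized $p$-Weil--Petersson embeddings sharing the same image in $\mathbb C$. For the $W_p \times IW_p$ structure this is almost definitional: by Lemma \ref{arclength} every $\gamma\in{\rm WPC}_p$ is written uniquely as $\gamma_0\circ f$ with $\gamma_0\in IW_p$ the arc-length parametrization and $f\in W_p$, and $\gamma_0$ depends only on the image of $\gamma$ (since arc-length parametrization, once normalized, is determined by the image up to the normalization), while conversely any $\gamma$ with that image has the same $\gamma_0$; hence the fiber of the second projection over $\gamma_0$ is exactly $\{\gamma_0\circ f : f\in W_p\}$, the family of all normalized reparametrizations of that image. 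For the $W_p \times RM_p$ structure I would show the second projection $\Phi$ likewise depends only on the image: $h=\Phi(\gamma)\in RM_p$ is the Riemann-mapping parametrization of the same curve, obtained from $\gamma$ by precomposition with $\Pi(\gamma)\in W_p$, so two embeddings with the same image have the same $\Phi$-value, and every embedding with that image is $h\circ f$ for some $f\in W_p$. The key observation making both fiber descriptions coincide is that passing between $\gamma_0$, $h$, and any other parametrization of a fixed image is exactly precomposition by an element of $W_p$ — the same group acting simply transitively on each fiber — so the two fiber structures are identical as partitions of ${\rm WPC}_p$, even though the chosen section ($IW_p$ versus $RM_p$) differs.

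I would then verify the topological statement that these fibers are the ``same'': concretely, the identity map on ${\rm WPC}_p$ carries the fiber partition of $(\Pi,\Phi)$ onto that of $J^{-1}\circ L$ because both partitions are the orbit partition of the $W_p$-reparametrization action, which is intrinsic to ${\rm WPC}_p$ and independent of the chosen coordinates; in Bers terms, the fiber through $([\mu_1],[\mu_2])$ is $\{([\nu],[\mu_2]\ast[\mu_1]^{-1}\ast[\nu]) : [\nu]\in T_p(\mathbb U)\}$, manifestly the same set of equivalence classes whether we label it by its $IW_p$-representative or its $RM_p$-representative. The main obstacle — and the only place real content enters rather than formal rearrangement — is confirming that the arc-length parametrization $\gamma_0$ of a $p$-Weil--Petersson curve is again a $p$-Weil--Petersson embedding and that it is determined by the image together with the normalization; this is where one uses $IW_p = L^{-1}(iB_p^{\mathbb R}(\mathbb R)^\circ)$ and the equivalence ``$\gamma\in IW_p \iff L(\gamma)$ purely imaginary'' from Section 4, together with Theorem \ref{thm1} to know $L(\gamma_0)$ lies in the image of $L$. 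Once that identification is in hand, and once the simple transitivity of the $W_p$-action on each fiber is recorded, the theorem is immediate.
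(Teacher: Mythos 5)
Your proposal is correct and follows essentially the same route as the paper, which assembles Theorem \ref{thm3} from the homeomorphism $(\Pi,\Phi)$ (via Theorem \ref{topgroup}), the homeomorphism $J^{-1}\circ L$ (via Theorems \ref{biholo} and \ref{Jhomeo} and Lemma \ref{arclength}), and the fiber identifications of Propositions \ref{fibers} and \ref{ab}. Your description of both fiber partitions as the orbit partition of the right-translation ($W_p$-reparametrization) action, including the explicit Bers-coordinate formula for a fiber, matches the paper's argument.
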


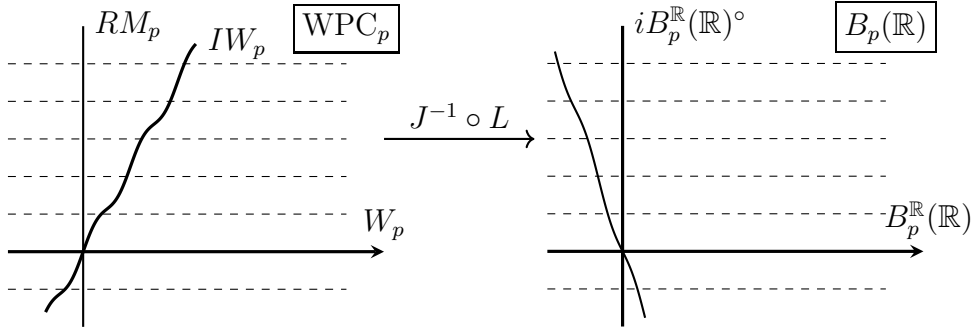
\begin{figure}[h]
\begin{tikzpicture}
\draw[->,>=stealth,very thick] (-1,0)--(4,0)node[above]{$W_p$}; 
\draw[-,>=stealth,thick] (0,-1)--(0,3)node[right]{$RM_p$}; 
\draw[dashed] (-1,-0.5)--(3.5,-0.5); 
\draw[dashed] (-1,0.5)--(3.5,0.5); 
\draw[dashed] (-1,1)--(3.5,1); 
\draw[dashed] (-1,1.5)--(3.5,1.5); 
\draw[dashed] (-1,2)--(3.5,2); 
\draw[dashed] (-1,2.5)--(3.5,2.5)node[above]{\fbox{${\rm WPC}_p$}}; 
\draw[very thick,samples=100,domain=-0.5:1.5] plot(\x,{1.8*\x+0.1*sin(10*\x r)})node[right]{$IW_p$};

\draw[->,thick](4,1.5)--(6,1.5); 
\draw(5,1.5)node[above]{$J^{-1} \circ L$};
\end{tikzpicture}
\begin{tikzpicture}
\draw[->,>=stealth,very thick] (-1,0)--(4,0)node[above]{$\ B_p^{\mathbb R}(\mathbb R)$}; 
\draw[-,>=stealth,very thick] (0,-1)--(0,3)node[right]{$iB_p^{\mathbb R}(\mathbb R)^\circ$}; 
\draw[dashed] (-1,-0.5)--(3.5,-0.5); 
\draw[dashed] (-1,0.5)--(3.5,0.5); 
\draw[dashed] (-1,1)--(3.5,1); 
\draw[dashed] (-1,1.5)--(3.5,1.5); 
\draw[dashed] (-1,2)--(3.5,2); 
\draw[dashed] (-1,2.5)--(3.5,2.5)node[above]{\fbox{$B_p(\mathbb R)$}}; 
\draw[thick,samples=100,domain=-0.9:0.3] plot(\x,{-3*\x+0.1*sin(10*\x r)});
\end{tikzpicture}
\caption{The fiber structure of ${\rm WPC}_p$}
\end{figure}

More detailed real-analytic correspondence between the subspaces of ${\rm WPC}_p$
are also obtained. Especially, the dependence of the Riemann mappings on the arc-length parameters of $p$-Weil--Petersson
curves is an interesting problem, and this is considered in Corollary \ref{real-analytic} and Theorem \ref{CM}.
The latter theorem contains the following new achievement on 
the real-analytic dependence of the following reparametrization map.

\begin{theorem}\label{thmnew}
The reparametrization map from the arc-length parametrization to the Riemann mapping parametrization
defined by 
$$
\lambda=L \circ \Pi \circ L^{-1}|_{iB_p^{\mathbb R}(\mathbb R)^\circ}:iB_p^{\mathbb R}(\mathbb R)^\circ \to B_p^{\mathbb R}(\mathbb R)
$$ 
is a real-analytic homeomorphism onto an open contractible domain of $B_p^{\mathbb R}(\mathbb R)$ for $p>1$ whose inverse is
also real-analytic.
\end{theorem}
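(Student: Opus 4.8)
The plan is to construct an explicit real-analytic two-sided inverse for $\lambda$, so that the inverse function theorem in Banach spaces is not even needed. Recall from Theorem \ref{thm1} that $L$ is a biholomorphic homeomorphism of ${\rm WPC}_p\cong T_p(\mathbb U)\times T_p(\mathbb L)$ onto an open subset of $B_p(\mathbb R)$, under which $W_p$ and $IW_p$ correspond to the totally real (hence real-analytic) submanifolds $B_p^{\mathbb R}(\mathbb R)$ and $iB_p^{\mathbb R}(\mathbb R)^\circ$, and that in the Bers coordinates $\Pi([\mu_1],[\mu_2])=([\mu_1],[\mu_1])$. First I would record that $\lambda=L\circ\Pi\circ L^{-1}|_{iB_p^{\mathbb R}(\mathbb R)^\circ}$ is real-analytic: $L^{-1}|_{iB_p^{\mathbb R}(\mathbb R)^\circ}$ and $L|_{W_p}$ are restrictions of the biholomorphic maps $L^{-1}$ and $L$ to totally real submanifolds and so are real-analytic, while $\Pi$ is real-analytic because in the Bers coordinates it is the holomorphic projection onto the first factor followed by the real-analytic boundary-value identification $T_p(\mathbb U)\to T_p(\mathbb L)$ (which is well defined here because, for the Weil--Petersson class, extendability to one half-plane with $p$-integrable dilatation forces it on the other).

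Next I would read off the inverse from the arc-length constraint. If $v=\log\gamma_0'\in iB_p^{\mathbb R}(\mathbb R)^\circ$ with $\gamma_0\in IW_p$, write the Riemann mapping decomposition $\gamma_0=h\circ f$ with $h=\Phi(\gamma_0)\in RM_p$ and $f=\Pi(\gamma_0)\in W_p$, and set $w=\lambda(v)=\log f'\in B_p^{\mathbb R}(\mathbb R)$. Since $|\gamma_0'|\equiv1$ and $w$ is real-valued, $\log|h'|=-w\circ f^{-1}$ on $\mathbb R$; as $\log h'$ extends holomorphically to $\mathbb U$ and $h$ is normalized, its boundary values are $\log h'|_{\mathbb R}=-w\circ f^{-1}+i\mathcal{H}(-w\circ f^{-1})+ic_0$, where $\mathcal{H}$ is a Hilbert transform on $\mathbb R$ and $c_0=c_0(w)\in\mathbb R$ is the constant fixed by the normalization. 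Inserting this into $\log\gamma_0'=(\log h'|_{\mathbb R})\circ f+\log f'$, the real parts cancel, leaving
\[
v=\lambda^{-1}(w)=i\bigl(c_0(w)-\mathcal{H}(w\circ f^{-1})\circ f\bigr),\qquad f'=e^{w}.
\]
I would take the right-hand side as the definition of a map $\widetilde\lambda:B_p^{\mathbb R}(\mathbb R)\to iB_p^{\mathbb R}(\mathbb R)$ and verify that it is real-analytic: $w\mapsto f$ is real-analytic (exponential, antiderivative, and the normalizing renormalization); $g\mapsto g\circ f$ and $g\mapsto g\circ f^{-1}$ are bounded, real-analytic self-maps of $B_p^{\mathbb R}(\mathbb R)$ depending real-analytically on $w$, which is where the topological group structure of $T_p$ (Theorem \ref{topgroup}), in its quantitative real-analytic refinement, is used; and $\mathcal{H}$ is a bounded linear operator on $B_p^{\mathbb R}(\mathbb R)$, hence real-analytic.

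The computation above shows $\widetilde\lambda\circ\lambda=\mathrm{id}$ on $iB_p^{\mathbb R}(\mathbb R)^\circ$, so $\lambda$ is injective. Put $U:=\widetilde\lambda^{-1}(iB_p^{\mathbb R}(\mathbb R)^\circ)$, an open subset of $B_p^{\mathbb R}(\mathbb R)$ since $\widetilde\lambda$ is continuous. For $w\in U$ the curve $\gamma_0:=L^{-1}(\widetilde\lambda(w))$ is then a genuine $p$-Weil--Petersson embedding, the locally univalent map $h$ with boundary-value $\log h'$ reconstructed as above is in fact univalent and realizes the Riemann mapping $\Phi(\gamma_0)$, and consequently $\Pi(\gamma_0)$ is precisely the $f$ built from $w$; this gives $\lambda(\widetilde\lambda(w))=w$, so that $U$ is exactly the image of $\lambda$. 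Hence $\lambda$ is a real-analytic diffeomorphism of $iB_p^{\mathbb R}(\mathbb R)^\circ$ onto the open domain $U\subset B_p^{\mathbb R}(\mathbb R)$ with real-analytic inverse $\widetilde\lambda|_U$. Finally, $U$ is contractible: $\lambda$ is a homeomorphism onto it, and $iB_p^{\mathbb R}(\mathbb R)^\circ$ is contractible because, through the biholomorphism $L$ and the map $\Phi$, it is homeomorphic to the coordinate slice $RM_p\cong T_p(\mathbb L)$ of the contractible product $T_p(\mathbb U)\times T_p(\mathbb L)$, the needed bicontinuity of $\Phi$ on $IW_p$ being supplied by the product-structure results (Theorems \ref{topgroup} and \ref{Jhomeo}).

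The step I expect to be the main obstacle is the claim that the composition operators $g\mapsto g\circ f^{\pm1}$ are bounded and real-analytic on the $p$-Besov space $B_p^{\mathbb R}(\mathbb R)$ and depend real-analytically on $f\in W_p$: this is the substantive analytic content underlying $\widetilde\lambda$, and it is exactly the point at which the topological---and in fact real-analytic---group structure of $T_p$ must be deployed in strengthened, quantitative form. By comparison, the harmonic-analytic ingredient (boundedness of $\mathcal{H}$ on $B_p^{\mathbb R}(\mathbb R)$) and the bookkeeping for the normalization constant $c_0(w)$ should be routine.
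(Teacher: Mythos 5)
Your derivation of the inversion formula $v=-P_f\circ\mathcal H\circ P_f^{-1}(\log f')$ with $f'=e^{w}$ is correct and is exactly the identity the paper uses (its equation $(\ast)$) to prove injectivity of $\lambda$; your arguments for the real-analyticity of $\lambda$ itself and for contractibility of the image are also sound. The genuine gap is the step you yourself flag as the ``main obstacle'': the claim that your candidate inverse $w\mapsto -iP_{f}\circ\mathcal H\circ P_{f}^{-1}(w)$, with $f$ depending on $w$ through $f'=e^{w}$, is real-analytic on $B_p^{\mathbb R}(\mathbb R)$. You propose to obtain this from a ``quantitative real-analytic refinement'' of the topological group structure of $T_p$, but Theorem \ref{topgroup} gives only continuity, and no such refinement is available by soft arguments: right translations of $T_p$ are biholomorphic, but the joint dependence of the composition operator $P_f$ on the symbol $f$ is not differentiable --- formally, differentiating $w\circ f^{-1}$ in the direction of $f$ produces $w'$, a loss of derivative that $B_p(\mathbb R)$ cannot absorb. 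The real-analytic dependence of the conjugated Hilbert transform $P_f\mathcal H P_f^{-1}$ on the curve is precisely the deep harmonic-analytic content of the Coifman--Meyer theorem (resting on the Cald\'eron--Coifman--McIntosh--Meyer theory of the Cauchy integral on Lipschitz curves), so at this point you are in effect assuming the essential part of the conclusion rather than proving it.

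The paper deliberately avoids re-proving that hard step. Its route is: (i) $\lambda$ is real-analytic because $\Pi$ is a coordinate projection in the Bers coordinates, $IW_p$ is a real-analytic submanifold, and $L$ is biholomorphic; (ii) the already-established BMO/chord-arc version $\tilde\lambda$ of Semmes and Coifman--Meyer is invoked as a black box, giving that $d_{\tilde w}\tilde\lambda$ is a bounded linear isomorphism; (iii) one checks $d_{w}\tilde\lambda|_{iB_p^{\mathbb R}(\mathbb R)}=d_{w}\lambda$ and proves surjectivity of $d_w\lambda$ onto $B_p^{\mathbb R}(\mathbb R)$ by pulling back a segment under $\tilde\lambda^{-1}$ and using the identity $(\ast)$ only with $\mathcal H$ and $P_f$ as \emph{fixed} bounded operators on $B_p(\mathbb R)$ (Lemma \ref{Hilbert} and Proposition \ref{pullback}), with no analytic dependence on $f$ required; (iv) the open mapping and inverse function theorems then yield the real-analyticity of $\lambda^{-1}$. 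To repair your proof you would either have to establish the real-analytic dependence of $P_f\mathcal H P_f^{-1}$ on $w$ in the $B_p$ setting --- a substantial theorem in its own right --- or switch to this derivative-comparison argument.
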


This is the Weil--Petersson curve version of the original result for
chord-arc curves by Coifman and Meyer \cite{CM}. Their researches provided
an important topic to this field. See 
Semmes \cite[Section 6]{Se} and Wu \cite{Wu}.
Our formulation of the space of $p$-Weil--Petersson curves can make these arguments transparent, and
in particular, the real-analycity of $\lambda$ already follows from our arguments immediately.
The essential step in the original work is the investigation of the inverse correspondence, but
once we know that $\lambda$ is real-analytic, we can apply their result to see that
$\lambda^{-1}$ is also real-analytic. These are demonstrated in Section 5.

\section{The $p$-Weil--Petersson Teichm\"uller space and the $p$-Besov space}

A measurable function $\mu$ on $\mathbb U$ is called a {\it Beltrami coefficient} if $\Vert \mu \Vert_\infty<1$.
By the solution of the Beltrami equation, there exists a quasiconformal homeomorphism $F$ of
$\mathbb U$ onto itself whose complex dilatation $\mu_F=\bar \partial F/\partial F$ coincides with $\mu$
uniquely up to the post-composition of affine transformations of $\mathbb U$.
The definition on the lower half-plane $\mathbb L$ can be similarly done for this and all other concepts 
appearing hereafter.

For $p \geq 1$, let $\mathcal M_p(\mathbb U)$ be the set of Beltrami coefficients $\mu$ satisfying
$$
\Vert \mu \Vert_p^p=\int_{\mathbb U}\frac{|\mu(z)|^p}{y^2} dxdy<\infty.
$$
By the norm $\Vert \cdot \Vert_\infty+\Vert \cdot \Vert_p$, 
$\mathcal M_p(\mathbb U)$ is a domain of the corresponding Banach space.

The {\it $p$-Weil--Petersson Teichm\"uller space} $T_p(\mathbb U)$ on the upper half-plane $\mathbb U$ is
defined to be
the set of all Teichm\"uller equivalence classes $[\mu]$ for $\mu \in \mathcal M_p(\mathbb U)$.
Here, $\mu$ and $\mu'$ are equivalent if the quasiconformal homeomorphisms $F_\mu$ and $F_{\mu'}$ 
of $\mathbb U$ onto itself determined by $\mu$ and $\mu'$ have the same boundary extension to $\mathbb R$
up to the post-composition with affine transformations of $\mathbb R$.
The quotient map $\pi:\mathcal M_p(\mathbb U) \to T_p(\mathbb U)$ taking the equivalence class
is called the {\it Teichm\"uller projection}.
The canonical complex Banach manifold structure of $T_p(\mathbb U)$ for $p \geq 1$ 
%such that
%the Teichm\"uller projection is holomorphic with a local holomorphic right inverse 
is introduced via
the Bers embedding into certain complex Banach space (see \cite[Theorem 4.4]{Ya}, \cite[Theorem 2.1]{TS}, \cite[Theorem 4.1]{WM-1},
and Appendix).

The boundary extension to $\mathbb R$ of a quasiconformal homeomorphism $F:\mathbb U \to \mathbb U$ 
is called a {\it quasisymmetric homeomorphism}. If such an $f:\mathbb R \to \mathbb R$
is the extension of $F$ whose complex dilatation is in $\mathcal M_p(\mathbb U)$, 
we say that $f$ is a {\it $p$-Weil--Petersson class homeomorphism}. Then, the above definition of $T_p(\mathbb U)$
is equivalent to saying that $T_p(\mathbb U)$ is the set of all $p$-Weil--Petersson class homeomorphisms
modulo affine transformations of $\mathbb R$. 

A $p$-Weil--Petersson class homeomorphism for $p>1$ can be intrinsically defined as an increasing homeomorphism 
$f:\mathbb R \to \mathbb R$ such that $f$ is locally absolutely continuous and $\log f'$ belongs to
$B_p^{\mathbb R}(\mathbb R)$ defined below (see \cite[Theorem 1.1]{Sh18}, \cite[Theorem 1.2]{TS}, 
\cite[Theorems 1.2 and 1.3]{STW} and \cite[Theorem 5.5]{WM-1}). Partially this will be seen in Lemma \ref{real-1}. 
 
The {\it $p$-Besov space} $B_p(\mathbb R)$ for $p > 1$ is
the set of all locally integrable complex-valued functions $u$ on $\mathbb R$ that satisfy
$$
\Vert u \Vert^p_{B_p}=\frac{1}{4\pi^2}\int_{-\infty}^{\infty}\!\int_{-\infty}^{\infty} \frac{|u(t)-u(s)|^p}{|t-s|^2} dsdt<\infty.
$$
It is easy to see that if $u \in B_p(\mathbb R)$ then $|u|,\ {\rm Re}\,u,\ {\rm Im}\,u \in B_p(\mathbb R)$.
We can regard $B_p(\mathbb R)$ as a complex Banach space with norm $\Vert \cdot \Vert_{B_p}$
by taking the quotient of constant functions. In other words, we regard $B_p(\mathbb R)$ as
a homogeneous Besov space, which is often denoted by $\dot B_p(\mathbb R)$ in the literature.
The real Banach subspace of $B_p(\mathbb R)$ consisting of the elements represented by 
real-valued functions is denoted by $B_p^{\mathbb R}(\mathbb R)$.
We note that when $p=1$, $B_p(\mathbb R)$ degenerates into the space of constant functions (see \cite[Exercise 17.14]{Leo}).

Concerning the variable change operator on this space, 
the following result was shown in \cite[Theorem 12 and Remark 5]{B2} (see also \cite[Theorem 2.2]{Vo} and \cite[Theorem 1.3 and Section 3.4]{BS}). 

\begin{proposition}\label{pullback} Let $p>1$.
The increasing homeomorphism $h$ from $\mathbb R$ onto itself is quasisymmetric if and only if 
the variable change operator $P_h: u \mapsto u\circ h$ gives an isomorphism of the $p$-Besov space $B_p(\mathbb R)$
onto itself,
that is, $P_h$ and $(P_h)^{-1}$ are bounded linear operators. 
\end{proposition}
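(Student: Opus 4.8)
The plan is to isolate the single equivalence ``$h$ is quasisymmetric if and only if $P_h$ is a bounded operator on $B_p(\mathbb R)$'' and to read the proposition off from it, handling the two implications by a real-variable estimate built on the doubling property of quasisymmetric maps and by a test-function argument exploiting the criticality of $B_p(\mathbb R)=\dot B^{1/p}_{p,p}(\mathbb R)$. Two reductions come first. Since $h$ is a homeomorphism of $\mathbb R$, one always has $(P_h)^{-1}=P_{h^{-1}}$ on $B_p(\mathbb R)$, and $h$ is quasisymmetric exactly when $h^{-1}$ is; hence it suffices to prove that $h$ quasisymmetric $\Longleftrightarrow$ $P_h\colon B_p(\mathbb R)\to B_p(\mathbb R)$ bounded. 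Next, the full M\"obius group acts isometrically on $B_p(\mathbb R)$: the measure $|t-s|^{-2}\,ds\,dt$ on $\mathbb R\times\mathbb R$ entering the seminorm is invariant under the diagonal action of M\"obius maps, so $\|u\circ\phi\|_{B_p}=\|u\|_{B_p}$ for M\"obius $\phi$. Consequently one may pre- and post-compose $h$ with M\"obius maps without changing $\|P_h\|$, and I shall normalize freely.

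\emph{If $h$ is $K$-quasisymmetric, then $P_h$ is bounded.} I would rewrite the seminorm in reduced form, $\|u\|_{B_p}^p\asymp\int_0^\infty\ell^{-2}\,\|u(\cdot+\ell)-u\|_{L^p(\mathbb R)}^p\,d\ell$, and substitute $u\mapsto u\circ h$, so that the inner $L^p$-norm becomes $\int_{\mathbb R}|u(h(x+\ell))-u(h(x))|^p\,dx$. Then I would change variables $y=h(x)$: although $h$ need not be absolutely continuous, quasisymmetry is precisely the statement that $\nu_h=h_*(\mathrm{Leb})$ is a doubling measure on $\mathbb R$ (with distribution function $h^{-1}$), so the integral equals $\int_{\mathbb R}|u(\phi_\ell(y))-u(y)|^p\,d\nu_h(y)$ with $\phi_\ell(y)=h(h^{-1}(y)+\ell)$. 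Quasisymmetry supplies exactly what the estimate needs: for fixed $\ell$, $\phi_\ell(y)-y$ is a doubling function of $y$ comparable to the local $\nu_h$-scale, and on each such scale $\nu_h$ is comparable, with constants depending only on $K$, to Lebesgue measure. Feeding this into Fubini and reorganizing the $\ell$-integral as an integral over the image scales $\delta$ returns $C(K)\int_0^\infty\delta^{-2}\|u(\cdot+\delta)-u\|_{L^p}^p\,d\delta\asymp C(K)\|u\|_{B_p}^p$; applying the same to $h^{-1}$ gives the reverse inequality, so $P_h$ is an isomorphism. (For $p=2$ there is a shortcut: $B_2(\mathbb R)$ is the trace space of the Dirichlet space on $\mathbb U$, whose energy is conformally, hence quasiconformally, invariant, and $h$ extends quasiconformally; but for $p\ne2$ the weighted energy $\int_{\mathbb U}|\nabla U|^p\,y^{p-2}\,dx\,dy$ is not quasi-invariant and one is thrown back on the estimate above.)

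\emph{If $h$ is not quasisymmetric, then $P_h$ (or $P_{h^{-1}}$) is unbounded.} Failure of quasisymmetry yields triples $(x_k,t_k)$ along which $R_k:=\bigl(h(x_k+t_k)-h(x_k)\bigr)/\bigl(h(x_k)-h(x_k-t_k)\bigr)\to\infty$ (or $\to0$, the symmetric case). Normalizing by M\"obius maps, I would reduce to $x_k=0$, $t_k=1$, $h(0)=0$, $h(1)=1$, $h(-1)=-1/R_k$. Against each such $h$ I would test with a function $u\in B_p(\mathbb R)$ carrying a borderline logarithmic spike at the origin, $u(y)\asymp(\log(1/|y|))^{\alpha}$ near $0$ for a fixed $\alpha\in(0,1-1/p)$ (whence $u\in B_p(\mathbb R)$, with a smooth cutoff), of width matched to the smaller image side $1/R_k$. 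The point is that $h$ stretches the two sides of that spike by ratios differing by a factor $R_k$, so the \emph{symmetric} spike of $u$ becomes a spike of $u\circ h$ whose two branches are offset by an additive $\log R_k$; a computation then shows $\|u\circ h\|_{B_p}\gtrsim(\log R_k)^{1/p}\,\|u\|_{B_p}$. Hence $\|P_h\|\gtrsim(\log R_k)^{1/p}$ for every $k$, so $\|P_h\|=\infty$; undoing the normalization, $P_h$ or $P_{h^{-1}}$ is unbounded, and $P_h$ is not an isomorphism of $B_p(\mathbb R)$. (Alternatively, one can organize this through the fact that a bounded $P_h$ forces $h$ to preserve the capacity associated with $B_p(\mathbb R)$ up to comparison, which characterizes quasisymmetry; for $p=2$ this is the logarithmic capacity.)

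The main obstacle is the tension between quasisymmetry and the generic lack of absolute continuity of a quasisymmetric homeomorphism. In the forward direction one cannot change variables naively; everything must be routed through the doubling measure $\nu_h$ and its scale-by-scale comparability with Lebesgue measure, and carrying out the reorganization of the $\ell$-integral with the correct $K$-dependent constants is the real content. In the converse direction the delicate point is to make the ``scale-mismatch'' amplification rigorous when $h$ is not affine near the bad triple: the offset between the two branches of $u\circ h$ must still be controlled from below over enough scales, which in the worst cases requires passing to a different scale or to $h^{-1}$; one must also pin down the logarithmic exponent $\alpha$ and the cutoff so that $u\in B_p(\mathbb R)$ while the amplification genuinely diverges. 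I expect these two points to absorb essentially all of the work; once the right reduced form of the $B_p$-norm is in hand, the remaining steps are routine.
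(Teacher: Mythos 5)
The paper does not prove Proposition \ref{pullback}; it is quoted from the literature (Bourdaud \cite[Theorem 12 and Remark 5]{B2}, with \cite{Vo} and \cite{BS}), so there is no in-paper argument to compare with. Judged on its own, your sketch sets up the right reductions (the isometric action of affine/M\"obius changes of variable on $B_p(\mathbb R)$, the reduction to one implication via $(P_h)^{-1}=P_{h^{-1}}$, the reduced form of the seminorm), but both implications stop exactly at the point where the proof has to happen, and the forward direction contains an assertion that is false as stated. A quasisymmetric homeomorphism can be completely singular, so $\nu_h=h_*(\mathrm{Leb})$ is in general mutually singular with Lebesgue measure; the claim that ``on each such scale $\nu_h$ is comparable, with constants depending only on $K$, to Lebesgue measure'' therefore fails, and with it the proposed ``Fubini and reorganization'' of $\int_0^\infty \ell^{-2}\int|u(\phi_\ell(y))-u(y)|^p\,d\nu_h(y)\,d\ell$ into $C(K)\int_0^\infty\delta^{-2}\|u(\cdot+\delta)-u\|_{L^p}^p\,d\delta$. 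What quasisymmetry actually gives is only that $\nu_h$ is doubling, and converting an integral against a doubling (possibly singular) measure of increments at the variable scale $\phi_\ell(y)-y$ into Lebesgue-measured increments is the entire content of the theorem; it is normally done through a dyadic/oscillation discretization of $\dot B^{1/p}_{p,p}$ (Bourdaud's route) or, for $p=2$ only, through quasiconformal quasi-invariance of the Dirichlet energy, as you note. (Singularity of $h$ also means $u\circ h$ is not well defined on a.e.-equivalence classes without passing to quasicontinuous representatives, another point the sketch must address before the change of variables is even legitimate.)

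The converse direction has a parallel gap that you yourself flag but do not close: the failure of quasisymmetry gives a bad ratio $R_k$ at a \emph{single} scale $t_k$, whereas the claimed lower bound $\Vert u\circ h\Vert_{B_p}\gtrsim(\log R_k)^{1/p}\Vert u\Vert_{B_p}$ requires controlling the distortion of $h$ over the $\sim\log R_k$ dyadic scales between $1/R_k$ and $1$ after normalization, which the hypothesis does not supply; ``passing to a different scale or to $h^{-1}$'' is the whole difficulty, not a remark. A secondary inaccuracy: with $u(y)\asymp(\log(1/|y|))^{\alpha}$, $\alpha<1-1/p$, the two branches of $u\circ h$ are \emph{not} offset by an additive $\log R_k$ (that holds only for $\alpha=1$, which is excluded since $\log(1/|y|)\notin B_p$ locally); the actual amplification comes from the mismatch $(A+\log R_k)^\alpha-A^\alpha$ summed over scales, which must be computed and which degenerates at scales far below $1/R_k$. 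The usual rigorous device here is to test with truncated logarithms adapted to the two image side-lengths, whose $B_p$-norm grows like $N^{1/p}$ in the truncation height, and to use only the endpoint values of the bad triple. In short: the strategy is reasonable and the obstacles are correctly identified, but the two estimates that constitute the proof are not carried out, and the forward direction as written relies on a comparison of $\nu_h$ with Lebesgue measure that does not hold.
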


\begin{remark}
In the case of $p=2$, it is known that the operator norm $\Vert P_h \Vert$ of the variable change operator $P_h:B_p(\mathbb R) \to B_p(\mathbb R)$ depends only on the doubling constant of $h$ 
(or equivalently, the quasisymmetric constant of $h$ or the Teichm\"uller distance $d_\infty(h,{\rm id})$).
See \cite[Theorem 3.1]{NS}. On the contrary, in the case of $p \neq 2$, the estimate of the operator norm seems
more difficult. See \cite[Remark 4]{B2}. In a special case where $\log h'$ belongs to $B_p(\mathbb R)$,
a certain dependence of $\Vert P_h \Vert$ on $h$ will be shown later in Lemma \ref{general}.
\end{remark}

Next, we consider analytic function spaces.
Let $\mathcal B(\mathbb U)$ denote the {\it Bloch space} of functions $\varphi$ holomorphic on $\mathbb U$ with semi-norm 
$$
\Vert \varphi \Vert_{\mathcal B} = \sup_{z \in \mathbb U} |\varphi'(z)|y. 
$$
Let $\mathcal B_p(\mathbb U)$ denote 
the {\it analytic $p$-Besov space} for $p > 1$ (or $p$-Dirichlet space) of holomorphic functions $\varphi$ on $\mathbb U$ with semi-norm 
$$
\Vert \varphi \Vert_{\mathcal B_p} = \left(\frac{1}{\pi}\iint_{\mathbb U} |\varphi'(z)|^p y^{p-2} dxdy \right)^{\frac{1}{p}}. 
$$
Then, $\mathcal B_p(\mathbb U) \subset \mathcal B_q(\mathbb U) 
\subset \mathcal B(\mathbb U)$ 
for $1 < p \leq q$, and the inclusion maps are continuous. 
By considering functions in these spaces modulo additive constants, which we always do hereafter,
the semi-norms become norms and the spaces become complex Banach spaces.

These spaces can be defined on the unit disk $\mathbb D$ in the same way as on $\mathbb U$, and 
any conformal homeomorphism $\mathbb U \to \mathbb D$ induces an isometric isomorphism between the corresponding spaces.
For example,
we take the Cayley transformation $T(z) = (z-i)/(z+i)$, which maps $\overline{\mathbb U}$ onto $\overline{\mathbb D}$, 
and define the push-forward operator $T_*: \varphi \mapsto \varphi\circ T^{-1}$ for functions $\varphi$ on $\mathbb U$. Then,
$$
\frac{1}{\pi}\iint_{\mathbb D}|(T_*\varphi)'(w)|^p\left(\frac{1-|w|^2}{2}\right)^{p-2}dudv
=\frac{1}{\pi}\iint_{\mathbb U} |\varphi'(z)|^p y^{p-2}dxdy,
$$
and by defining ${\mathcal B}_p(\mathbb D)$ as the space of holomorphic functions on $\mathbb D$ with the
norm in the left side integral of the above equation finite, ${\mathcal B}_p(\mathbb D)$ and ${\mathcal B}_p(\mathbb U)$
are isometric.

We also see that the Cayley transformation $T$ induces an isometric isomorphism
between $B_p(\mathbb S)$ and $B_p(\mathbb R)$. Here, $B_p(\mathbb S)$ can be also defined in a natural way. Indeed,
\begin{equation*}
\begin{split}
&\quad \frac{1}{4\pi^2}\int_{\mathbb S} \int_{\mathbb S} \frac{|(T_*\varphi)(w_1)-(T_*\varphi)(w_2)|^p}{|w_1-w_2|^2}|dw_1||dw_2|\\
&=\frac{1}{4\pi^2}\int_{\mathbb R} \int_{\mathbb R} 
\frac{|\varphi(z_1)-\varphi(z_2)|^p}{|T(z_1)-T(z_2)|^2}|T'(z_1)||T'(z_2)||dz_1||dz_2|\\
&=\frac{1}{4\pi^2}\int_{\mathbb R} \int_{\mathbb R} 
\frac{|\varphi(z_1)-\varphi(z_2)|^p}{|z_1-z_2|^2}|dz_1||dz_2|,
\end{split}
\end{equation*}
where we used the identity 
$$
\frac{|T(z_1)-T(z_2)|^2}{|z_1-z_2|^2}=|T'(z_1)||T'(z_2)|
$$
for a M\"obius transformation $T$.

We will use a fact that each function $\varphi \in \mathcal B_p(\mathbb U)$ 
has boundary values almost everywhere on $\mathbb R$, 
and this boundary function $b(\varphi)$ belongs to the $p$-Besov space $B_p(\mathbb R)$.
As we have seen above, the results on the pairs $({\mathcal B}_p(\mathbb U),B_p(\mathbb R))$ 
and $({\mathcal B}_p(\mathbb D),B_p(\mathbb S))$ correspond under $T_*$;
we will consider this problem for $({\mathcal B}_p(\mathbb D),B_p(\mathbb S))$.

The boundary function $b(\phi)$ is given by the non-tangential limit of $\phi \in \mathcal B_p(\mathbb D)$.
The existence of the non-tangential limit, and moreover, the reproduction of $\phi$ from $b(\phi)$
by the Poisson integral have been proved (see \cite[Lemma 10.13]{Zh}).

For $p=2$, the fact that $b(\phi) \in B_2(\mathbb S)$ for $\phi \in \mathcal B_2(\mathbb D)$
is well-known as the Douglas formula for the Dirichlet integral:
$$
\Vert \phi \Vert_{\mathcal B_2}=\int_{\mathbb D} |\phi'(z)|^2 \frac{dxdy}{\pi}=
\int_{\mathbb S}\! \int_{\mathbb S} \frac{|b(\phi)(z)-b(\phi)(w)|^2}{|z-w|^2} \frac{|dz|}{2\pi}\frac{|dw|}{2\pi}
=\Vert b(\phi) \Vert_{B_2}.
$$
See \cite[Theorem 2-5]{Ah} for example.
The statement for the general case is as follows (see \cite[pp.131, 301]{Zh}). 

\begin{lemma}\label{131}
The boundary function $b(\phi)$ of $\phi \in \mathcal B_p(\mathbb D)$ belongs to $B_p(\mathbb S)$ for $p > 1$. 
The boundary extension operator $b:\mathcal B_p(\mathbb D) \to B_p(\mathbb S)$ is a bounded linear isomorphism onto the image.
\end{lemma}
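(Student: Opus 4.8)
The plan is to reduce the statement about $b:\mathcal B_p(\mathbb D)\to B_p(\mathbb S)$ to the already-established correspondence between $\mathcal B_p(\mathbb D)$ and the fractional derivative / integral models, and to exhibit an explicit bounded linear inverse of $b$. First I would recall from \cite[pp.131, 301]{Zh} that for $p>1$ a holomorphic $\phi$ on $\mathbb D$ lies in $\mathcal B_p(\mathbb D)$ precisely when it is the Poisson (equivalently, Cauchy) integral of a function in $B_p(\mathbb S)$, and that the two norms are comparable; the Douglas formula recalled above is the isometric instance $p=2$. So the content splits into two directions: (i) boundedness of $b$, i.e. every $\phi\in\mathcal B_p(\mathbb D)$ has non-tangential boundary values forming a function in $B_p(\mathbb S)$ with $\Vert b(\phi)\Vert_{B_p}\lesssim\Vert\phi\Vert_{\mathcal B_p}$, and (ii) boundedness of the inverse on the image, i.e. $\Vert\phi\Vert_{\mathcal B_p}\lesssim\Vert b(\phi)\Vert_{B_p}$, which combined with injectivity (a nonconstant holomorphic function cannot have boundary values constant a.e.) gives that $b$ is a linear isomorphism onto its image.

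For direction (i) I would estimate the Besov double integral of $b(\phi)$ directly. Writing $b(\phi)(e^{i\theta})-b(\phi)(e^{i\psi})$ as the boundary limit of $\phi$, one uses the standard device
$$
\phi(e^{i\theta})-\phi(e^{i\psi})=\int_{\sigma}\phi'(z)\,dz
$$
along a suitable path $\sigma$ joining the two boundary points through the disk at hyperbolic distance $\sim 1$ from each, together with Hölder's inequality in the form
$$
|b(\phi)(e^{i\theta})-b(\phi)(e^{i\psi})|^p\lesssim |\theta-\psi|^{p-1}\int_\sigma|\phi'(z)|^p\,|dz|.
$$
Integrating against $|e^{i\theta}-e^{i\psi}|^{-2}\sim|\theta-\psi|^{-2}$ and applying Fubini, each point $z\in\mathbb D$ is charged by a family of arcs of comparable scale $\sim 1-|z|$, producing the weight $(1-|z|)^{p-2}$ and hence $\Vert\phi\Vert_{\mathcal B_p}^p$. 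This is a routine Schur-type / integration-by-parts computation, and I would phrase it via the isometry with $(\mathcal B_p(\mathbb U),B_p(\mathbb R))$ established above so the path $\sigma$ can be taken as a vertical segment joining $t_1,t_2\in\mathbb R$ to the point $x+iy$ with $x=(t_1+t_2)/2$, $y=|t_1-t_2|$.

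For direction (ii), the cleanest route is to invoke the Poisson/Cauchy reproduction $\phi=\mathcal P[b(\phi)]$ recalled from \cite[Lemma 10.13]{Zh} and to bound the Besov seminorm $\Vert\phi\Vert_{\mathcal B_p}$ of a Poisson extension of $u\in B_p(\mathbb S)$ by $\Vert u\Vert_{B_p}$. Concretely, differentiating under the Poisson kernel gives
$$
\phi'(z)=\frac{1}{2\pi}\int_{\mathbb S}\partial_z P(z,\zeta)\,u(\zeta)\,|d\zeta|
=\frac{1}{2\pi}\int_{\mathbb S}\partial_z P(z,\zeta)\,\bigl(u(\zeta)-u(\zeta_z)\bigr)\,|d\zeta|,
$$
where $\zeta_z$ is the radial projection of $z$ and we used $\int\partial_z P=0$; then one estimates $|\phi'(z)|$ by $(1-|z|)^{-1}$ times an average of $|u(\zeta)-u(\zeta_z)|$ over the arc of scale $1-|z|$, raises to the $p$-th power via Hölder/Jensen, multiplies by $(1-|z|)^{p-2}$ and integrates over $\mathbb D$; Fubini again converts this to the Besov double integral of $u$. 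I expect direction (ii), i.e. the $B_p(\mathbb S)\to\mathcal B_p(\mathbb D)$ bound for the Poisson extension, to be the main obstacle, since for $p\neq 2$ there is no orthogonality to exploit and one must be careful that the ``first-difference'' kernel manipulations only lose constants depending on $p$; the case $p=2$ is the Douglas identity and serves as a sanity check. Once both inequalities are in hand, linearity and boundedness of $b$ are immediate, injectivity follows from the identity theorem for harmonic functions, and $b:\mathcal B_p(\mathbb D)\to B_p(\mathbb S)$ is a bounded linear isomorphism onto its (closed) image, which by the open mapping theorem has bounded inverse; transporting through the Cayley isometry $T_*$ yields the same statement for $(\mathcal B_p(\mathbb U),B_p(\mathbb R))$.
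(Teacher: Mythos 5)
The paper gives no proof of this lemma at all: it is quoted from the literature (Zhu, pp.~131 and 301; Stein, Section V.5; Pavlovi\'c), so any self-contained argument is already a departure. Your architecture --- a two-sided norm comparison, with the trace inequality obtained by integrating $\phi'$ along tent paths and the extension inequality by differentiating the Poisson kernel against first differences --- is indeed the standard one in those references. However, both halves of the sketch have a genuine gap as written.

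For direction (i), the single-path estimate $|\phi(e^{i\theta})-\phi(e^{i\psi})|^p\lesssim|\theta-\psi|^{p-1}\int_\sigma|\phi'|^p\,|dz|$ followed by Fubini charges a point $z$ at depth $\delta=1-|z|$ with the total weight $\int_\delta^{2\pi}h^{p-3}\,dh$, coming from all pairs at separation $h\geq\delta$ whose tent passes through $z$. This is comparable to $\delta^{p-2}$ only when $1<p<2$; for $p=2$ it is $\log(1/\delta)$ and for $p>2$ it is a constant, so you do not recover the weight $(1-|z|)^{p-2}$ and the resulting bound is strictly weaker than $\Vert\phi\Vert_{\mathcal B_p}^p$ (for $p=2$ it would even contradict the sharpness of the Douglas identity). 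The standard repair is to take $L^p$ norms in the boundary variable first (Minkowski's integral inequality), reducing to $\Vert u(\cdot+t)-u\Vert_{L^p}\lesssim\int_0^t g(y)\,dy+t\,g(t)$ with $g(y)=\Vert\phi'(\cdot+iy)\Vert_{L^p}$, and then to apply Hardy's inequality $\int_0^\infty t^{-2}\bigl(\int_0^t g\bigr)^p\,dt\lesssim_p\int_0^\infty t^{p-2}g^p\,dt$; this is exactly where the hypothesis $p>1$ enters. For direction (ii), plain H\"older against the measure $|d\zeta|/|z-\zeta|^2$ gives $|\phi'(z)|^p\lesssim(1-|z|)^{1-p}\int_{\mathbb S}|u(\zeta)-u(\zeta_z)|^p\,|z-\zeta|^{-2}|d\zeta|$, and after multiplying by $(1-|z|)^{p-2}$ and applying Fubini the radial integral $\int_0^1(1-r)^{-1}|r\eta-\zeta|^{-2}\,dr$ diverges logarithmically at $r=1$. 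One needs a weighted Schur test --- split the kernel as $|z-\zeta|^{-2}=(1-|z|)^{\epsilon/q}|z-\zeta|^{-2/p-\epsilon/q}\cdot(1-|z|)^{-\epsilon/q}|z-\zeta|^{\epsilon/q-2/q}$ for small $\epsilon>0$ and the conjugate exponent $q$ --- to make the same computation close. Both repairs are standard, and you correctly flagged (ii) as the delicate point, but ``routine Schur-type computation'' undersells what is required in either direction. Two minor remarks: since the spaces are taken modulo constants, injectivity is immediate from the Poisson reproduction $\phi=\mathcal P[b(\phi)]$, and once the two-sided norm equivalence is established the open mapping theorem is not needed.
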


A proof for the non-homogeneous Besov space on $\mathbb R$ (and on $\mathbb R^n$)
can be found in \cite[Section V.5]{St}.
A more explicit proof is in \cite[Theorems 2.1 and 5.1]{Pa}. These are referred to by \cite[p.505]{Rei}.

The inverse map of the boundary extension $b$ can be extended to $B_p(\mathbb S)$ as the Riesz--Szeg\"o projection.
The boundedness of this projection is also known (see \cite[Section 2.3]{Pe}).
This implies the boundedness of the conjugate operator.
On the real line $\mathbb R$, this is the {\it Hilbert transformation} $\mathcal H$, which is defined by
$$
({\mathcal H}u)(x)=\lim_{\varepsilon \to 0}\frac{1}{\pi} \int_{|x-t|>\varepsilon} \frac{u(t)}{x-t}dt
$$
(see \cite[Section III.1]{Ga}).

\begin{lemma}\label{Hilbert}
The Hilbert transformation $\mathcal H$ gives a bounded linear surjective isomorphism
$\mathcal H: B_p(\mathbb R) \to B_p(\mathbb R)$ such that $\mathcal H^2=-I$ for $p > 1$.
\end{lemma}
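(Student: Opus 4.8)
The plan is to deduce Lemma~\ref{Hilbert} from the already-established facts about the boundary extension operator $b$ and the Riesz--Szeg\"o projection, transferring everything from $\mathbb S$ to $\mathbb R$ via the Cayley transformation $T$. First I would recall the decomposition of $B_p(\mathbb S)$ into analytic and anti-analytic parts: by Lemma~\ref{131}, $b$ identifies $\mathcal B_p(\mathbb D)$ with a closed subspace $B_p^+(\mathbb S) \subset B_p(\mathbb S)$, and the complex conjugate identifies $\mathcal B_p(\mathbb D)$ (via anti-holomorphic functions) with the complementary subspace $B_p^-(\mathbb S)$. The boundedness of the Riesz--Szeg\"o projection $S: B_p(\mathbb S) \to B_p^+(\mathbb S)$, quoted from \cite[Section 2.3]{Pe}, together with the corresponding projection onto $B_p^-(\mathbb S)$, gives a bounded direct-sum decomposition $B_p(\mathbb S) = B_p^+(\mathbb S) \oplus B_p^-(\mathbb S)$ modulo constants. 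The conjugate operator on $\mathbb S$ is then, up to normalization, the operator that acts as $-i$ on the analytic part and as $+i$ on the anti-analytic part (the harmonic conjugate of the real part of a boundary function); it is bounded because $S$ is, and it squares to $-I$ since it is diagonalized with eigenvalues $\pm i$ on the two summands.

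Next I would transfer this to $\mathbb R$. The Cayley transformation $T(z)=(z-i)/(z+i)$ induces an isometric isomorphism $T_*: B_p(\mathbb S) \to B_p(\mathbb R)$, as computed in the excerpt using the identity $|T(z_1)-T(z_2)|^2/|z_1-z_2|^2 = |T'(z_1)||T'(z_2)|$, and it also carries $\mathcal B_p(\mathbb D)$ isometrically onto $\mathcal B_p(\mathbb U)$. Hence it carries the splitting $B_p^+(\mathbb S) \oplus B_p^-(\mathbb S)$ to the splitting of $B_p(\mathbb R)$ into the boundary values of $\mathcal B_p(\mathbb U)$ and of anti-holomorphic $p$-Besov functions on $\mathbb U$. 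Under this transfer the conjugate operator on $\mathbb S$ becomes precisely the Hilbert transformation $\mathcal H$ on $\mathbb R$: this is the standard fact that the harmonic conjugate on the half-plane has $\mathcal H$ as its boundary operator, with $\mathcal H$ acting as $-i$ (resp. $+i$) on the Hardy-type analytic (resp. anti-analytic) summand. Since $T_*$ is an isometry and the conjugate operator on $\mathbb S$ is bounded, $\mathcal H: B_p(\mathbb R) \to B_p(\mathbb R)$ is bounded; it is surjective with bounded inverse because the two summands are interchanged appropriately and $\mathcal H^2$ acts as $(-i)^2 = -1$ on each summand, i.e. $\mathcal H^2 = -I$, which also exhibits $\mathcal H^{-1} = -\mathcal H$.

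I expect the main technical point to be making the identification "conjugate operator on $\mathbb S$ $\leftrightarrow$ Hilbert transformation on $\mathbb R$" precise at the level of $B_p$ rather than $L^p$, since on the homogeneous Besov space we work modulo constants and the principal-value integral defining $\mathcal H$ must be checked to be compatible with the Cayley change of variables (the kernel $1/(x-t)$ pulls back correctly under M\"obius maps only after accounting for the $|T'|$ factors, matching the computation already displayed for the $B_p$ norm). Once that compatibility is recorded, the boundedness, surjectivity, and the relation $\mathcal H^2 = -I$ all follow formally from the bounded projection onto the analytic subspace. A cleaner alternative, which I would actually carry out, is to bypass the integral formula altogether: \emph{define} the action of $\mathcal H$ on $B_p(\mathbb R)$ by transporting the conjugate operator from $\mathcal B_p(\mathbb U)$ (where it is manifestly bounded with square $-I$ by Lemma~\ref{131} and the Riesz--Szeg\"o boundedness), and then observe that this transported operator agrees with the principal-value Hilbert transformation on the dense subclass where the latter is classically defined. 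This reduces the lemma to the boundedness of the Riesz--Szeg\"o projection, which is the cited input.
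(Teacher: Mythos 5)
Your proposal is correct and follows essentially the same route as the paper, which derives the lemma from the boundedness of the Riesz--Szeg\"o projection on $B_p(\mathbb S)$ (citing Peller) so that the conjugate operator is bounded, and then identifies it with the Hilbert transformation on $\mathbb R$ via the Cayley transformation; the paper leaves these steps as citations, while you fill in the analytic/anti-analytic splitting, the eigenvalue $\pm i$ computation giving $\mathcal H^2=-I$ modulo constants, and the (correctly flagged) compatibility of the principal-value kernel with the M\"obius change of variables. No gaps.
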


This also follows from the results on more general operators, for example, in \cite[Th\'eor\`eme A]{Lem} 
and \cite[Proposition 4.7]{GP}.

The $p$-Besov space $B_p(\mathbb R)$ is closely related to
BMO functions defined below.
A locally integrable complex-valued function $u$ on $\mathbb R$ is of {\it BMO} if
$$
\Vert u \Vert_{*} = \sup_{I \subset \mathbb R}\frac{1}{|I|} \int_I |u(x)-u_I| dx <\infty,
$$
where the supremum is taken over all bounded intervals $I$ on $\mathbb R$ and $u_I$ denotes the integral mean of $u$
over $I$. The set of all BMO functions on $\mathbb R$ is denoted by ${\rm BMO}(\mathbb R)$.
This is regarded as a Banach space with the BMO-norm $\Vert \cdot \Vert_*$
by ignoring the difference of constant functions. 
It is said that $u \in {\rm BMO}(\mathbb R)$ is of {\it VMO} if
$$ 
\lim_{|I| \to 0}\frac{1}{|I|} \int_I |u(x)-u_I| dx=0,
$$
and the set of all such functions is denoted by ${\rm VMO}(\mathbb R)$.

The following relation between $B_p(\mathbb R)$ and ${\rm VMO}(\mathbb R)$ is known.
See \cite[Section 3]{SW} and \cite[Propositions 2.2 and 2.3]{WM-3}.

\begin{proposition}\label{VMOBp}
$(1)$ If $u \in B_p(\mathbb R)$ then $u \in {\rm VMO}(\mathbb R)$. Moreover, $\Vert u \Vert _* \leq \Vert u \Vert_{B_p}$.
$(2)$ If $u \in B_p^{\mathbb R}(\mathbb R)$ then $e^u$ is an $A_\infty$-weight.
\end{proposition}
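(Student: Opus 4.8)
For (1) I would argue by hand, and for (2) I would reduce the claim to a scale-invariant exponential integrability inequality for the critical Besov space.

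For the BMO part of (1), fix a bounded interval $I$. Since $u_I$ is the mean of $u$ over $I$, I would write $u(x)-u_I=\frac{1}{|I|}\int_I (u(x)-u(y))\,dy$, so that $\frac{1}{|I|}\int_I|u(x)-u_I|\,dx\le\frac{1}{|I|^2}\int_I\int_I|u(x)-u(y)|\,dy\,dx$. Applying Jensen's inequality with the convex function $t\mapsto t^p$ and the probability measure $|I|^{-2}dx\,dy$ on $I\times I$, and then using $|x-y|\le|I|$ to replace $|I|^{-2}$ by $|x-y|^{-2}$, I obtain
$$
\left(\frac{1}{|I|}\int_I|u(x)-u_I|\,dx\right)^p\le \int_I\int_I\frac{|u(x)-u(y)|^p}{|x-y|^2}\,dy\,dx.
$$
Comparing the right-hand side with the full double integral defining $\Vert u\Vert_{B_p}$ and taking the supremum over $I$ gives $u\in{\rm BMO}(\mathbb R)$ together with the stated norm bound.

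The VMO part follows from the same estimate and the absolute continuity of the Lebesgue integral. The integrand $g(x,y)=|u(x)-u(y)|^p/|x-y|^2$ lies in $L^1(\mathbb R^2)$ exactly because $u\in B_p(\mathbb R)$, so for every $\varepsilon>0$ there is $\delta>0$ with $\int_A g<\varepsilon$ whenever the planar measure of $A$ is below $\delta$. As $|I\times I|=|I|^2$ depends only on $|I|$ and not on the location of $I$, the quantity $\int_{I\times I}g$ tends to $0$ as $|I|\to 0$ uniformly in the position of $I$; the displayed inequality then yields $\lim_{|I|\to 0}\frac{1}{|I|}\int_I|u-u_I|\,dx=0$.

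For (2) I would use the reverse Jensen (Hru\v{s}\v{c}ev) characterization: $e^u$ is an $A_\infty$-weight if and only if $\sup_I\frac{1}{|I|}\int_I e^{\,u-u_I}\,dx<\infty$ over all bounded intervals. Writing $\omega_p(I)=\bigl(\int_I\int_I g\,dy\,dx\bigr)^{1/p}$ for the intrinsic Gagliardo seminorm of $u$ on $I$, the estimate from (1) shows that $\omega_p(I)$ is bounded across all scales, $\sup_I\omega_p(I)^p\le 4\pi^2\Vert u\Vert_{B_p}^p<\infty$. The crucial analytic input is a scale- and translation-invariant Moser--Trudinger inequality for the critical fractional Sobolev space $W^{1/p,p}=\dot B_p^{1/p,p}$: there are universal constants $c_0,C_0>0$ with
$$
\frac{1}{|I|}\int_I\exp\!\left(c_0\left(\frac{|u-u_I|}{\omega_p(I)}\right)^{p'}\right)dx\le C_0,\qquad p'=\frac{p}{p-1},
$$
for every interval $I$ (for $p=2$ this is the Chang--Marshall--Beurling inequality for $H^{1/2}$).

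Granting this inequality, the conclusion is purely algebraic. By Young's inequality with conjugate exponents $p'$ and $p$, for real $u$ one has $u-u_I\le |u-u_I|\le c_0\bigl(|u-u_I|/\omega_p(I)\bigr)^{p'}+C\,\omega_p(I)^p$ with $C=C(p,c_0)$, whence
$$
\frac{1}{|I|}\int_I e^{\,u-u_I}\,dx\le e^{\,C\,\omega_p(I)^p}\,\frac{1}{|I|}\int_I\exp\!\left(c_0\left(\frac{|u-u_I|}{\omega_p(I)}\right)^{p'}\right)dx\le C_0\,e^{\,4\pi^2 C\Vert u\Vert_{B_p}^p},
$$
finite and independent of $I$, giving the reverse Jensen inequality and hence $e^u\in A_\infty$. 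The feature that makes this work for arbitrary, not necessarily small, $\Vert u\Vert_{B_p}$ is that the exponent $p'>1$ lets Young's inequality absorb $\omega_p(I)$ of any size at the cost of the finite additive term $C\,\omega_p(I)^p$; an ordinary John--Nirenberg bound, which controls $e^{c|u-u_I|}$ only for $c$ below a threshold proportional to $1/\omega_p(I)$, would be insufficient. Accordingly, the main obstacle I expect is establishing the localized critical Moser--Trudinger inequality with the intrinsic seminorm $\omega_p(I)$ as normalizer; everything else is routine.
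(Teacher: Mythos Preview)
The paper does not supply its own proof of this proposition; it defers to \cite[Section~3]{SW} and \cite[Propositions~2.2 and~2.3]{WM-3}. So there is no in-paper argument against which to compare yours, and I evaluate your proposal on its own merits.

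Part~(1) is correct and standard, with one caveat about the constant. With the paper's normalization $\Vert u\Vert_{B_p}^p=(4\pi^2)^{-1}\iint |u(x)-u(y)|^p|x-y|^{-2}\,dx\,dy$, your displayed inequality yields $\bigl(\tfrac{1}{|I|}\int_I|u-u_I|\bigr)^p\le 4\pi^2\Vert u\Vert_{B_p}^p$, hence only $\Vert u\Vert_*\le(4\pi^2)^{1/p}\Vert u\Vert_{B_p}$ rather than the constant-$1$ bound as stated. The continuous inclusion and the VMO conclusion (via absolute continuity of the Lebesgue integral, exactly as you argue) are unaffected; the precise constant presumably reflects the normalization used in the cited source.

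Part~(2) is a valid route. By dilation and translation invariance, the localized Moser--Trudinger inequality you need reduces to a fixed interval, where it becomes the Orlicz embedding of the critical fractional Sobolev space $W^{1/p,p}$ (mean-zero functions) into $\exp L^{p'}$; this is the one-dimensional instance of the borderline case $sp=n$ and is available in the literature on fractional Trudinger inequalities. Your observation that ordinary John--Nirenberg is insufficient, and that the exponent $p'>1$ is precisely what allows Young's inequality to absorb an arbitrary $\omega_p(I)$ at the cost of the bounded additive term $C\,\omega_p(I)^p\le 4\pi^2 C\Vert u\Vert_{B_p}^p$, is exactly the point. The Hru\v{s}\v{c}ev reverse-Jensen characterization of $A_\infty$ then finishes the argument as you indicate. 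This is the natural strategy and is consistent with how the result is proved in the reference the paper cites.
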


Here, we say that a non-negative 
locally integrable function $\omega \geq 0$ is an {\it $A_{\infty}$-weight} 
if there exists a constant $C_\infty(\omega) \geq 1$ such that
\begin{equation*}
\frac{1}{|I|} \int_I \omega(x) dx \leq C_\infty(\omega) \exp \left(\frac{1}{|I|} \int_I \log \omega(x) dx \right) 
\end{equation*}
for every bounded interval $I \subset \mathbb R$.
If $\omega$ is an $A_{\infty}$-weight, then $\log\omega$ is a BMO function (see \cite[Corollary IV.2.19]{GR}).  

Finally, we introduce classes of Beltrami coefficients on $\mathbb U$ including $\mathcal M_p(\mathbb U)$.
Let $\lambda$ be a positive Borel measure on the upper half-plane $\mathbb{U}$. 
We say that $\lambda$ is a {\em Carleson measure} if
$$
\Vert \lambda \Vert_c  = \sup_{I \subset \mathbb R} \frac{\lambda(I \times (0,|I|])}{|I|} < \infty,
$$
where the supremum is taken over all bounded closed interval $I \subset \mathbb R$ and
$I \times (0,|I|] \subset \mathbb U$ is the Carleson box. The set of all Carleson measures on $\mathbb U$ is denoted by
${\rm CM}(\mathbb U)$. 
A Carleson measure $\lambda \in {\rm CM}(\mathbb U)$ is called {\em vanishing} if
$$
\lim_{|I| \to 0}\frac{\lambda(I \times (0,|I|])}{|I|} = 0.
$$
The set of all vanishing Carleson measures on $\mathbb U$ is denoted by
${\rm CM}_0(\mathbb U)$.

For a Beltrami coefficient $\mu$ on $\mathbb{U}$, we define a positive Borel measure 
$\lambda_{\mu}$ that is absolutely continuous with respect to the Lebesgue measure and satisfies
$$
d\lambda_{\mu}(z) = |\mu(z)|^2y^{-1}dxdy. 
$$
Using this, a norm of $\mu$ is defined by $\Vert \mu \Vert_c=\Vert \lambda_\mu \Vert_c^{1/2}$.
Let ${\mathcal M}_c(\mathbb U)$ be the set of all Beltrami coefficients on $\mathbb U$ with $\Vert \mu \Vert_c<\infty$,
which is a domain of the Banach space with norm $\Vert \cdot \Vert_c +\Vert \cdot \Vert_\infty$.
The following claim implies the inclusion ${\mathcal M}_p(\mathbb U) \subset {\mathcal M}_c(\mathbb U)$.

\begin{proposition}\label{pBel}
If $\mu \in {\mathcal M}_p(\mathbb U)$ for $p \geq 1$, then $\lambda_\mu \in {\rm CM}_0(\mathbb U)$.
Moreover, for $p \geq 2$, $\Vert \mu \Vert_c \leq C_p \Vert \mu \Vert_p$ for some constant $C_p>0$ depending only on $p$.
\end{proposition}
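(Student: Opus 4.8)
\emph{Proof proposal.} The plan is to treat the two ranges $1\le p\le 2$ and $p\ge 2$ separately, handling the first by an elementary pointwise bound and the second by H\"older's inequality, while in both cases deducing the vanishing Carleson property from the single observation that
$$
\nu(E):=\int_E |\mu(z)|^p\,y^{-2}\,dxdy
$$
defines a \emph{finite} Borel measure on $\mathbb U$, of total mass $\Vert\mu\Vert_p^p$. Writing $Q_I=I\times(0,|I|]$ for a Carleson box, I would first record the reduction step: since $Q_I\subset\{z\in\mathbb U:0<\mathrm{Im}\,z\le|I|\}$ and these strips decrease to the empty set as $|I|\to0$, continuity from above for the finite measure $\nu$ gives $\sup_{|I|\le\delta}\nu(Q_I)\le\nu(\{0<\mathrm{Im}\,z\le\delta\})\to0$ as $\delta\to0$. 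Consequently it will suffice to bound $\lambda_\mu(Q_I)/|I|$ by a fixed constant times a positive power of $\nu(Q_I)$: the smallness then forces $\lambda_\mu\in{\rm CM}_0(\mathbb U)$, and replacing $\nu(Q_I)$ by $\nu(\mathbb U)=\Vert\mu\Vert_p^p$ in the same bound will yield the norm comparison.

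For $1\le p\le2$ the bound is immediate: because $\Vert\mu\Vert_\infty<1$ we have $|\mu|^2\le|\mu|^p$ a.e., and on $Q_I$ one has $y^{-1}\le|I|\,y^{-2}$; multiplying these gives $\lambda_\mu(Q_I)\le|I|\,\nu(Q_I)$, hence $\lambda_\mu(Q_I)/|I|\le\nu(Q_I)\to0$ uniformly in the location of $I$. In the borderline case $p=2$ this already gives $\Vert\mu\Vert_c\le\Vert\mu\Vert_2$, i.e.\ $C_2=1$.

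For $p>2$ I would instead split $|\mu|^2 y^{-1}=(|\mu|^p y^{-2})^{2/p}\cdot y^{(4-p)/p}$ and apply H\"older's inequality on $Q_I$ with the conjugate exponents $p/2$ and $p/(p-2)$. This reduces matters to the auxiliary integral $\int_{Q_I}y^{(4-p)/(p-2)}\,dxdy=|I|\int_0^{|I|}y^{(4-p)/(p-2)}\,dy$; here the only thing to check is that the exponent satisfies $(4-p)/(p-2)>-1$, which holds for every $p>2$, so the inner integral converges and equals $\tfrac{p-2}{2}|I|^{p/(p-2)}$. Inserting this and dividing by $|I|$ produces
$$
\frac{\lambda_\mu(Q_I)}{|I|}\le\bigl(\tfrac{p-2}{2}\bigr)^{(p-2)/p}\,\nu(Q_I)^{2/p}\le\bigl(\tfrac{p-2}{2}\bigr)^{(p-2)/p}\Vert\mu\Vert_p^2,
$$
so that $\lambda_\mu\in{\rm CM}_0(\mathbb U)$ by the reduction step and $\Vert\mu\Vert_c=\Vert\lambda_\mu\Vert_c^{1/2}\le C_p\Vert\mu\Vert_p$ with $C_p=\bigl(\tfrac{p-2}{2}\bigr)^{(p-2)/(2p)}$, which depends only on $p$.

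The computation itself is routine, so there is no serious obstacle; the one point worth being careful about is that the vanishing Carleson condition requires the smallness of $\nu(Q_I)$ to be \emph{uniform} over all positions of $I$ of a given small length, and this uniformity is exactly what the finite-measure/continuity-from-above observation supplies. A minor technical check, flagged above, is the integrability at the boundary of $y^{(4-p)/(p-2)}$ in the H\"older step.
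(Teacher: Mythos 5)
Your proof is correct and follows essentially the same route as the paper: for $p\ge 2$ you apply H\"older's inequality on the Carleson box with the conjugate exponents $p/2$ and $p/(p-2)$ (exactly the paper's $p'=p/2$, $q'=p/(p-2)$ computation, with the same auxiliary integral of $y^{(4-p)/(p-2)}$ and the same constant $\bigl(\tfrac{p-2}{2}\bigr)^{(p-2)/p}$), while your pointwise bound $|\mu|^2\le|\mu|^p$ for $1\le p\le 2$ is just the explicit form of the paper's reduction via ${\mathcal M}_p\subset{\mathcal M}_2$. Your strip/continuity-from-above argument is a welcome explicit justification of the uniform vanishing that the paper only asserts; the only blemish is a harmless wording slip where you call $\tfrac{p-2}{2}|I|^{p/(p-2)}$ the value of the \emph{inner} integral rather than of the full integral over $Q_I$.
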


\begin{proof}
For the first statement, we may assume that $p \geq 2$ because ${\mathcal M}_p(\mathbb U) \subset {\mathcal M}_2(\mathbb U)$
if $p < 2$.
Let $p'=p/2 \geq 1$ and take $q'\geq 1$ satisfying $1/p'+1/q'=1$. When $p'=1$, the inequality becomes simpler.
For any bounded interval $I \subset \mathbb R$, we have
\begin{align*}
&\quad \frac{1}{|I|}\int_0^{|I|}\! \int_I \frac{|\mu(z)|^2}{y}dxdy
=\frac{1}{|I|}\int_0^{|I|}\! \int_I \frac{|\mu(z)|^2}{y^{2/p'}}\cdot y^{\frac{2}{p'}-1}dxdy\\
&\leq \left(\int_0^{|I|}\! \int_I \frac{|\mu(z)|^p}{y^2}dxdy \right)^{2/p} \left(|I|^{1-q'}\int_0^{|I|} 
y^{-\left(1-\frac{1}{p'-1}\right)} dy \right)^{1/q'}.
\end{align*}
The first factor in the last line of the above inequality is bounded by $\Vert \mu \Vert_p^2$ and
tends to $0$ uniformly as $|I| \to 0$. The second factor is equal to $(p'-1)^{-1/q'}$ ($=1$ when $p'=1$), which we define as
$C_p^2$. Taking the square root shows the statement.
\end{proof}

\section{The Bers coordinates of the space of $p$-Weil--Petersson curves}

In this section, we introduce the Bers coordinates for the space of
$p$-Weil--Petersson embeddings $\mathbb R \to \mathbb C$ and show
the holomorphic correspondence to the $p$-Besov space
for $p>1$.

\begin{definition}
A continuous embedding $\gamma:\mathbb R \to \mathbb C$ passing through $\infty$ is called a
{\it $p$-Weil--Petersson embedding} for $p \geq 1$ if there is a quasiconformal homeomorphism 
$G:\mathbb C \to \mathbb C$ such that $G|_{\mathbb R}=\gamma$ and 
its complex dilatation $\mu_G=\bar \partial G/\partial G$ on $\mathbb U$ belongs to $\mathcal M_p(\mathbb U)$ and
$\mu_G$ on $\mathbb L$ belongs to $\mathcal M_p(\mathbb L)$.
We call such $G$ a {\it $p$-Weil--Petersson quasiconformal homeomorphism} associated with $\gamma$.
\end{definition}

The image of $\gamma:\mathbb R \to \mathbb C$ as above 
is called a {\it $p$-Weil--Petersson curve}. To consider the space of $p$-Weil--Petersson curves,
we add their parametrizations and regard them as $p$-Weil--Petersson embeddings. 
Special types of $p$-Weil--Petersson embeddings $\gamma$ are as follows. 
If such an embedding $\gamma$ maps $\mathbb R$ onto itself, this is nothing but a $p$-Weil--Petersson class homeomorphism. 
If $\gamma$ extends conformally to $\mathbb U$, we call
it the {\it Riemann mapping parametrization} of a $p$-Weil--Petersson curve.

We can define a {\it BMO embedding} $\gamma:\mathbb R \to \mathbb C$ by replacing the above $\mathcal M_p(\mathbb U)$
and $\mathcal M_p(\mathbb L)$ with $\mathcal M_c(\mathbb U)$ and $\mathcal M_c(\mathbb L)$.
By Proposition \ref{pBel}, we see that any $p$-Weil--Petersson embedding $\gamma$ for $p \geq 1$
is a BMO embedding.
Hence, we can utilize the following
known properties of BMO embeddings (see \cite[Proposition 3.3, Theorem 3.6]{WM-0})
for $p$-Weil--Petersson embeddings. 

\begin{proposition}\label{derivative}
A BMO-embedding $\gamma:\mathbb R \to \mathbb C$ has its derivative $\gamma'$ almost everywhere on $\mathbb R$ and $\log \gamma' \in {\rm BMO}(\mathbb R)$.
Moreover, if $|\gamma'|$ is an $A_\infty$-weight on $\mathbb R$, then 
$\gamma$ is locally absolutely continuous and the image $\gamma(\mathbb R)$ is a chord-arc curve.
\end{proposition}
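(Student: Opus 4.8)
The plan is to realize $\gamma$ via a Bers-type simultaneous uniformization and reduce both assertions to the boundary behaviour of two Riemann maps together with that of a pair of strongly quasisymmetric reparametrizations of $\mathbb{R}$. Fix a quasiconformal $G:\mathbb{C}\to\mathbb{C}$ associated with $\gamma$, so that $G|_{\mathbb{R}}=\gamma$ and $\mu_G|_{\mathbb{U}}\in\mathcal{M}_c(\mathbb{U})$, $\mu_G|_{\mathbb{L}}\in\mathcal{M}_c(\mathbb{L})$. Write $\Omega_+=G(\mathbb{U})$, $\Omega_-=G(\mathbb{L})$; these are quasidisks with common boundary the Jordan curve $\Gamma=\gamma(\mathbb{R})$. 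Let $\psi_+:\mathbb{U}\to\Omega_+$ and $\psi_-:\mathbb{L}\to\Omega_-$ be conformal, and set $h_\pm=(\psi_\pm^{-1}\circ G)|_{\mathbb{R}}$, which are quasisymmetric self-homeomorphisms of $\mathbb{R}$ whose quasiconformal extensions have complex dilatations $\mu_G|_{\mathbb{U}}$ and $\mu_G|_{\mathbb{L}}$; taking boundary values of $G|_{\mathbb{U}}=\psi_+\circ(\psi_+^{-1}\circ G|_{\mathbb{U}})$, and likewise on $\mathbb{L}$, gives $\gamma=\psi_\pm\circ h_\pm$ on $\mathbb{R}$, the $\psi_\pm$ being understood via their Carath\'eodory extensions. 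Since the complex dilatations generate Carleson measures, $h_\pm$ belong to the BMO--Teichm\"uller class: $h_\pm$ and $h_\pm^{-1}$ are locally absolutely continuous, $\log h_\pm'\in{\rm BMO}(\mathbb{R})$, and $h_\pm'$, $(h_\pm^{-1})'$ are $A_\infty$-weights; in particular composition with $h_\pm$ maps ${\rm BMO}(\mathbb{R})$ into itself because $h_\pm^{-1}$ is strongly quasisymmetric.

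For the first assertion I would invoke the Astala--Zinsmeister correspondence, in the form underlying the heat-kernel extension of \cite{FKP}: after a reflection so that $\psi_+$ becomes the conformal restriction of a global quasiconformal map whose remaining dilatation is again in $\mathcal{M}_c$, the Carleson condition forces the pre-Schwarzian $\psi_+''/\psi_+'$ to generate a Carleson measure on $\mathbb{U}$, equivalently $\log\psi_+'\in{\rm BMOA}(\mathbb{U})$, and the same for $\psi_-$ on $\mathbb{L}$. Because ${\rm BMOA}\subset\bigcap_{q<\infty}H^q$, the function $\log\psi_+'$ has finite non-tangential limits almost everywhere on $\mathbb{R}$, lying in ${\rm BMO}(\mathbb{R})$, so $\psi_+$ has a finite non-zero angular derivative a.e. Since $h_+$ and $h_+^{-1}$ are locally absolutely continuous, hence map null sets to null sets, the chain rule yields $\gamma'(x)=\psi_+'(h_+(x))\,h_+'(x)$ for almost every $x$, so $\gamma'$ exists a.e. Then $\log\gamma'=(\log\psi_+')\circ h_++\log h_+'$; the first summand lies in ${\rm BMO}(\mathbb{R})$ because $\log\psi_+'\in{\rm BMO}(\mathbb{R})$ and composition with $h_+$ preserves ${\rm BMO}(\mathbb{R})$, the second by the BMO--Teichm\"uller property, whence $\log\gamma'\in{\rm BMO}(\mathbb{R})$.

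For the ``moreover'', assume $|\gamma'|$ is an $A_\infty$-weight on $\mathbb{R}$. From $|\gamma'|=(|\psi_\pm'|\circ h_\pm)\,h_\pm'$ and the change of variables $E\mapsto h_\pm^{-1}(E)$, the measure $|\psi_\pm'|\,dx$ is the pushforward of $|\gamma'|\,dx$ under the strongly quasisymmetric homeomorphism $h_\pm$; pushforward by such a map carries $A_\infty$-weights to $A_\infty$-weights, so $|\psi_\pm'|$ is an $A_\infty$-weight on the boundary and in particular locally integrable there. Since $\psi_\pm$ is univalent, $\psi_\pm'$ is a zero-free function of the Smirnov class $N^+$, so local integrability of its boundary values upgrades it to a locally $H^1$ function; hence $\Gamma$ is locally rectifiable, $\psi_\pm$ is locally absolutely continuous on the boundary, and by the F.\ and M.\ Riesz theorem harmonic measure and arc length on $\Gamma$ are mutually absolutely continuous, so $\psi_\pm$ and $\psi_\pm^{-1}$ satisfy Luzin's condition (N). Composing with $h_+$ shows that $\gamma$ is locally absolutely continuous, so $\gamma(x)=\gamma(x_0)+\int_{x_0}^{x}\gamma'(t)\,dt$ on bounded intervals. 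Finally $\Gamma$ is a rectifiable quasicircle whose harmonic measure from each of its complementary domains is an $A_\infty$-weight with respect to arc length; by the standard characterization of chord-arc (Lavrentiev) curves, $\Gamma$ is a chord-arc curve.

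The main obstacle is the bridge used in the second paragraph: turning the Carleson-measure condition on the Beltrami coefficient over a half-plane into $\log\psi_\pm'\in{\rm BMOA}$. This is precisely where the quasiconformal data is converted into boundary BMO-regularity, and it rests on the Astala--Zinsmeister / Fefferman--Kenig--Pipher estimates linking Carleson measures, the (pre-)Schwarzian derivative and $A_\infty$-properties of elliptic/harmonic measure; everything else is bookkeeping with known composition and boundary-value facts. A secondary delicate point is the rectifiability step in the ``moreover'', which needs $\psi'\in N^+$ for univalent $\psi$, the F.\ and M.\ Riesz theorem, and the $A_\infty$-harmonic-measure characterization of chord-arc curves, all to be applied with due care near the point at infinity.
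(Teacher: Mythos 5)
The paper itself does not prove this proposition; it is imported wholesale from \cite[Proposition 3.3, Theorem 3.6]{WM-0}. Your argument reconstructs what is evidently the intended proof, and it is the exact BMO-analogue of the template the paper uses for the $B_p$-versions (Lemmas \ref{RM} and \ref{real-1} and Theorem \ref{curve}): factor $\gamma=\psi_+\circ h_+$ through a Riemann map onto $G(\mathbb U)$ and a strongly quasisymmetric self-map of $\mathbb R$, control $h_+$ by Fefferman--Kenig--Pipher/Semmes, control $\log\psi_+'$ by Astala--Zinsmeister, and assemble the conclusion with the chain rule and the invariance of ${\rm BMO}$ and $A_\infty$ under strongly quasisymmetric changes of variable. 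The overall plan is sound.

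Two steps are asserted rather than proved. First, knowing that $\log\psi_+'$ has finite non-tangential limits a.e.\ only gives the \emph{angular} derivative of $\psi_+$; to write $\gamma'(x)=\psi_+'(h_+(x))\,h_+'(x)$ you need the boundary function of $\psi_+$ to be differentiable a.e.\ \emph{along} $\mathbb R$ with derivative equal to that angular derivative. This is false for general conformal maps and is precisely the point for which the paper invokes \cite[Theorem 5.5]{Pom} in Lemma \ref{RM}; it holds here because $\psi_+$ extends quasiconformally to $\mathbb C$, but it must be said. Second, the phrase ``after a reflection so that $\psi_+$ becomes the conformal restriction of a global quasiconformal map whose remaining dilatation is again in $\mathcal M_c$'' conceals the real work: one must extend $h_+$ to $\mathbb L$ by a quasiconformal map that is bi-Lipschitz for the hyperbolic metric so that the push-forward of $\mu_G|_{\mathbb L}$ stays in $\mathcal M_c(\mathbb L)$ before Astala--Zinsmeister can be applied; the paper makes this replacement explicit in the proof of Theorem \ref{curve}, and the same care is required here. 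Both gaps are fillable by standard citations, so I would accept the proposal modulo these two references; the final chord-arc step via the $A_\infty$/Lavrentiev characterization of quasidisks is legitimate, but note that the quantitative argument behind it uses $\log\psi_\pm'\in{\rm BMOA}$ (which you have from the first part) and not merely rectifiability together with the $A_\infty$ property of harmonic measure.
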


A locally rectifiable Jordan curve $\Gamma$ 
passing through $\infty$ is called a {\it chord-arc curve} if there is
a constant $K \geq 1$ such that the length of the arc between $a, b \in \Gamma$
is bounded by $K|a-b|$.

\begin{lemma}\label{RM}
If $h:\mathbb R \to \mathbb C$ is a Riemann mapping parametrization of a $p$-Weil--Petersson curve for $p>1$, 
then $h$ is locally absolutely continuous with $\log h' \in B_p(\mathbb R)$.
\end{lemma}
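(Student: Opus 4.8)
The plan is to realize $h$ as the boundary function of a conformal map $H$ on $\mathbb U$, to recognize that $\log H'$ lies in the analytic $p$-Besov space $\mathcal B_p(\mathbb U)$, and then to transfer this to $\mathbb R$ by Lemma \ref{131}.

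Since $h$ is a Riemann mapping parametrization, we may take a $p$-Weil--Petersson quasiconformal homeomorphism $G:\mathbb C\to\mathbb C$ associated with $h$ that is conformal on $\mathbb U$, and put $H=G|_{\mathbb U}$; then $h$ is the boundary function of $H$, and $\mu_G|_{\mathbb L}\in\mathcal M_p(\mathbb L)$ represents the point of $T_p(\mathbb L)$ to which $h$ corresponds in the Bers coordinates of Section 3. Because $\gamma$ passes through $\infty$ and is normalized, $H$ differs from the conformal part on $\mathbb U$ of the Bers-embedded representative of $[\mu_G|_{\mathbb L}]$ only by an affine transformation, so $\log H'$ differs from the logarithm of the derivative of that canonical conformal map by a constant, which the norm of $\mathcal B_p(\mathbb U)$ ignores. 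Hence, by the pre-Schwarzian derivative model for the $p$-Weil--Petersson Teichm\"uller space --- part of the complex-analytic description of $T_p$ underlying the Bers embedding recalled in Section 2, which for $p=2$ is Cui's theorem that $\iint_{\mathbb U}|(\log H')'(z)|^2\,dxdy<\infty$ --- one has $\log H'\in\mathcal B_p(\mathbb U)$.

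Next we apply the upper half-plane form of Lemma \ref{131}, obtained from the disk version via the Cayley transform exactly as explained just before that lemma: the boundary extension operator $b$ sends $\log H'\in\mathcal B_p(\mathbb U)$ to its non-tangential boundary function $u:=b(\log H')\in B_p(\mathbb R)$. It remains to identify $u$ with $\log h'$. Since $\gamma$, hence $h$, is a BMO embedding by Proposition \ref{pBel}, Proposition \ref{derivative} provides the almost-everywhere derivative $h'$ together with $\log h'\in{\rm BMO}(\mathbb R)$; as this derivative is the non-tangential boundary limit of $H'$ on $\mathbb R$ --- a standard boundary-correspondence fact, using $\log H'\in\mathcal B_p(\mathbb U)\subset\mathcal B(\mathbb U)$ --- we obtain $\log h'=u$ almost everywhere, whence $\log h'\in B_p(\mathbb R)$. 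Finally, ${\rm Re}\log h'=\log|h'|$ is then a real element of $B_p^{\mathbb R}(\mathbb R)$, so $|h'|$ is an $A_\infty$-weight by Proposition \ref{VMOBp}(2), and the second assertion of Proposition \ref{derivative} gives that $h$ is locally absolutely continuous (and that $h(\mathbb R)$ is a chord-arc curve). This completes the proof.

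The main obstacle is the first step, $\log H'\in\mathcal B_p(\mathbb U)$: if the pre-Schwarzian model of $T_p$ is not taken as established, proving it amounts, for $p\ge 2$, to a Calder\'on--Zygmund type bound for the relevant singular integral on the weighted space with weight $y^{p-2}$, giving $\Vert\log H'\Vert_{\mathcal B_p}\lesssim\Vert\mu_G|_{\mathbb L}\Vert_p$, while for $1<p<2$ it needs an additional interpolation or duality argument; by comparison, the boundary identification $\log h'=b(\log H')$ and the transfer via Lemma \ref{131} are routine.
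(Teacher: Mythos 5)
Your proof is correct and follows essentially the same route as the paper: extend $h$ conformally to $H$ on $\mathbb U$, invoke the pre-Schwarzian characterization (Theorem \ref{Guo11}, implication (a) $\Rightarrow$ (b)) to get $\log H'\in\mathcal B_p(\mathbb U)$, pass to the boundary via Lemma \ref{131}, identify $b(\log H')$ with $\log h'$ a.e., and then use Propositions \ref{VMOBp} and \ref{derivative} for the $A_\infty$ property and local absolute continuity. The "main obstacle" you flag is precisely what the paper delegates to its Appendix (Theorem \ref{Guo11}), so nothing further is needed.
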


\begin{proof}
By definition, $h$ extends to a conformal homeomorphism $H$ on $\mathbb U$. 
By Theorem \ref{Guo11} in Appendix, we have $\log H' \in \mathcal B_p(\mathbb U)$. Then,
the non-tangential limit $b(\log H')$ belongs to $B_p(\mathbb R)$ by Lemma \ref{131}.
On the other hand, 
$\log h'$ coincides with $b(\log H')$ (a.e.) since $H$ has the quasiconformal extension to $\mathbb C$
(see \cite[Theorem 5.5]{Pom}). Thus, we have $\log h' \in B_p(\mathbb R)$. In particular, $|h'| \in A_{\infty}$ by Proposition \ref{VMOBp}. Then, we conclude by Proposition \ref{derivative} that $h$ is locally absolutely continuous on $\mathbb R$. 
\end{proof}

The following result is given in \cite[p.1056]{Sh18} for $p = 2$, and in \cite[p.669]{TS} for $p \geq 2$. The generalization to $p > 1$ is possible as we also do in \cite[Theorem 5.5]{WM-1}.

\begin{lemma}\label{real-1}
If $f:\mathbb R \to \mathbb R$ is a
$p$-Weil--Petersson class homeomorphism for $p>1$, then $f$ is locally absolutely continuous with $\log f' \in B_p^{\mathbb R}(\mathbb R)$.
\end{lemma}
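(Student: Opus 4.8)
The plan is to reduce Lemma \ref{real-1} to the analytic Besov estimates already prepared in the paper, rather than to re-run the argument from \cite{Sh18} or \cite{TS}. The starting observation is that a $p$-Weil--Petersson class homeomorphism $f:\mathbb R\to\mathbb R$ is, by definition, the boundary extension of a quasiconformal homeomorphism $F$ of $\mathbb U$ onto itself whose complex dilatation lies in $\mathcal M_p(\mathbb U)$. The same $f$ is also the boundary extension of a quasiconformal self-map $F_*$ of $\mathbb L$ with dilatation in $\mathcal M_p(\mathbb L)$ (reflect, or simply observe $f$ glues $\mathbb U$ and $\mathbb L$ into a quasiconformal homeomorphism of $\mathbb C$ fixing $\mathbb R$). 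Thus $f$ is, in particular, a $p$-Weil--Petersson embedding whose image is $\mathbb R$ itself.

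First I would apply the Bers-type simultaneous uniformization to this $f$: solve the Beltrami equation on $\mathbb C$ with dilatation equal to $\mu_F$ on $\mathbb U$ and $0$ on $\mathbb L$. This produces a quasiconformal $G:\mathbb C\to\mathbb C$, conformal on $\mathbb L$, whose restriction to $\mathbb R$ is a Riemann mapping parametrization $h$ of the quasicircle $G(\mathbb R)$, and $G|_{\mathbb U}$ carries the $p$-integrable dilatation. By Lemma \ref{RM}, $h$ is locally absolutely continuous with $\log h'\in B_p(\mathbb R)$. Symmetrically, solving with $\mu$ supported on $\mathbb L$ and zero on $\mathbb U$ gives a parametrization $h_*$ conformal on $\mathbb U$ with $\log h_*'\in B_p(\mathbb R)$ as well. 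The point is that $f = h_*^{-1}\circ h$ (or the analogous composition, after normalization), since both $h$ and $h_*$ have the same image curve and the dilatations are arranged so that the composition of the two quasiconformal maps reconstructs $F$ on $\mathbb U$.

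Next I would compute $\log f'$. Since $f=h_*^{-1}\circ h$ with $h,h_*$ locally absolutely continuous and $h_*^{-1}$ likewise (its derivative being $1/h_*'\circ h_*^{-1}$, bounded away from $0$ and $\infty$ locally because $|h_*'|$ is an $A_\infty$-weight), $f$ is locally absolutely continuous and the chain rule gives
$$
\log f'(x) = \log h'(x) - (\log h_*')(f(x)) = \log h'(x) - P_{f}\!\left(\log h_*'\right)(x).
$$
Here $\log h'\in B_p(\mathbb R)$ directly, and $\log h_*'\in B_p(\mathbb R)$; since $f$ is quasisymmetric (being a $p$-Weil--Petersson class homeomorphism, hence quasisymmetric), Proposition \ref{pullback} shows $P_f$ is a bounded operator on $B_p(\mathbb R)$, so $P_f(\log h_*')\in B_p(\mathbb R)$. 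Hence $\log f'\in B_p(\mathbb R)$. Finally, because $f$ maps $\mathbb R$ onto $\mathbb R$ and is increasing, $f'>0$ a.e., so $\log f'$ is real-valued, giving $\log f'\in B_p^{\mathbb R}(\mathbb R)$ and the asserted local absolute continuity.

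The main obstacle I anticipate is bookkeeping around normalizations and the precise factorization $f=h_*^{-1}\circ h$: one must be careful that the two Bers constructions are normalized compatibly (fixing, say, $0,1,\infty$ or using affine normalization on $\mathbb R$) so that the composition is literally $f$ and not $f$ post-composed with a M\"obius map — but since $B_p^{\mathbb R}(\mathbb R)$ is taken modulo constants and affine maps have affine (hence $B_p$-trivial, i.e.\ constant-derivative) logarithmic derivative, this ambiguity is harmless. A secondary technical point is justifying the chain rule for the composition of locally absolutely continuous maps when one of them is a homeomorphism with an $A_\infty$ Jacobian; this is where the $A_\infty$ property from Proposition \ref{VMOBp}(2) and the chord-arc conclusion of Proposition \ref{derivative} are used to ensure no loss of absolute continuity under composition and inversion. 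Everything else is a direct invocation of Lemmas \ref{RM}, \ref{131}, Propositions \ref{pullback}, \ref{VMOBp}, \ref{derivative}, and Theorem \ref{Guo11}.
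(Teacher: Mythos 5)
Your overall strategy --- factor $f=h_*^{-1}\circ h$ through a conformal sewing, apply Lemma \ref{RM} to $h$ and $h_*$, and conclude via Proposition \ref{pullback} --- is exactly the route the paper takes. But the way you build $h$ and $h_*$ does not produce that factorization, and in bypassing the correct construction you also bypass the one genuinely nontrivial step.

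Concretely: you solve the Beltrami equation once with data $(\mu_F,0)$ to get $G$ and once with data $(0,\overline{\mu_F(\bar z)})$ to get $G_*$, and set $h=G|_{\mathbb R}$, $h_*=G_*|_{\mathbb R}$. By the reflection symmetry of the Beltrami equation, $G_*(z)=\overline{G(\bar z)}$ up to normalization, so $h_*=\overline{h}$ and its image is the mirror image $\overline{G(\mathbb R)}$ of the image of $h$. These two quasicircles do not coincide unless $G(\mathbb R)$ happens to be symmetric about $\mathbb R$, so $h_*^{-1}\circ h$ is not even well defined as a self-map of $\mathbb R$, let alone equal to $f$. The assertion that ``both $h$ and $h_*$ have the same image curve and the dilatations are arranged so that the composition reconstructs $F$'' is where the argument breaks: checking the composition formula for complex dilatations shows that for $G_*\circ F=G$ one needs the dilatation of $G_*$ on the relevant half-plane to be the \emph{push-forward} by $F$ of (minus) $\mu_F$ --- essentially the dilatation of $F^{-1}$ --- not the reflected coefficient $\overline{\mu_F(\bar z)}$.

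This is precisely the point the paper flags as ``a crucial step'': one of the two conformal maps in the sewing must carry the dilatation of an extension of $f^{-1}$ (equivalently, a push-forward $F_*\nu$), and showing that this still lies in $\mathcal M_p$ is the inverse/group-structure property of $T_p$, proved via bi-Lipschitz quasiconformal extensions in \cite{WM-1}. Without it, the hypothesis of Lemma \ref{RM} (dilatation in $\mathcal M_p$ on the complementary half-plane) is not available for $h_*$, and the proof does not close. Once the sewing is set up correctly, the rest of your argument (boundedness of $P_f$ from Proposition \ref{pullback}, realness of $\log f'$) is fine; for the local absolute continuity it is cleaner to argue, as the paper does, directly from $h_*\circ f=h$ that $f$ maps null sets to null sets because $h$ is locally absolutely continuous and $|h_*'|>0$ a.e., rather than inverting $h_*$ --- an $A_\infty$ weight need not be locally bounded away from $0$ and $\infty$, so your justification of the chain rule as stated is also shaky.
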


\begin{proof}
By the well-known conformal sewing principle
(see \cite[Section III.1.4]{Le} and \cite[p.11]{TT}), there exists a pair of quasiconformal homeomorphisms $H$ and $H_*$ on the whole plane $\mathbb C$ such that $H$ is conformal on $\mathbb U$, $H^*$ is conformal on $\mathbb L$, and 
$f =h_*^{-1} \circ h$ for $h=H|_{\mathbb R}$ and $h_*=H_*|_{\mathbb R}$ on $\mathbb R$.  
Moreover, we can choose these $H$ and $H_*$ so that the complex dilatation of $H|_{\mathbb L}$ is in $\mathcal M_p(\mathbb L)$
and the complex dilatation of $H_*|_{\mathbb U}$ is in $\mathcal M_p(\mathbb U)$.
This is a crucial step and its argument is in \cite{WM-1}.
We note that to obtain the appropriate mapping $H_*$, we have to show that the inverse $f^{-1}$ has
a quasiconformal extension whose complex dilatation belongs to $\mathcal M_p(\mathbb U)$.
This is a part of the property that $T_p$ has the group structure, which will be explained in Section 6.

By Lemma \ref{RM}, we have $h$ and $h_*$ are locally absolutely continuous with 
$\log h' \in B_p(\mathbb R)$ and $\log h_*' \in B_p(\mathbb R)$.
From $h_* \circ f =h$ on $\mathbb R$, we see that the increasing homeomorphism
$f$ maps a set of null measure to a set of null measure
because $h$ is locally absolutely continuous and $|h_*'(x)| > 0$ almost everywhere on $\mathbb R$.
Hence, $f$ is locally absolutely continuous.
Taking the derivatives of the both sides of the above equality, we have
\begin{equation*}\label{sewing}
P_f(\log h_*')+\log f' =\log h'.
\end{equation*}
By Proposition \ref{pullback}, we see that $P_f(\log h_*') \in B_p(\mathbb R)$. Hence,
$\log f' \in B_p(\mathbb R)$. 
\end{proof}

The combination of the above two lemmas yields the general property on $p$-Weil--Petersson embeddings.

\begin{theorem}\label{curve}
A $p$-Weil--Petersson embedding 
$\gamma:\mathbb R \to \mathbb C$ is locally absolutely continuous and $\log \gamma'$
belongs to $B_p(\mathbb R)$ for $p>1$. 
\end{theorem}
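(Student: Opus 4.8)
The plan is to factor an arbitrary $p$-Weil--Petersson embedding through its image curve and then apply Lemmas \ref{RM} and \ref{real-1} to the two factors. Let $G:\mathbb C\to\mathbb C$ be a $p$-Weil--Petersson quasiconformal homeomorphism associated with $\gamma$, and let $\Omega^+=G(\mathbb U)$ and $\Omega^-=G(\mathbb L)$ be the two Jordan domains bounded by $\Gamma=\gamma(\mathbb R)$. Fix a conformal map $\varphi:\mathbb U\to\Omega^+$, which extends to a homeomorphism of $\overline{\mathbb U}$ onto $\overline{\Omega^+}$ by Carath\'eodory's theorem. Then $(\varphi^{-1}\circ G)|_{\mathbb U}$ is a quasiconformal self-map of $\mathbb U$ with complex dilatation $\mu_G|_{\mathbb U}\in\mathcal M_p(\mathbb U)$, so $f:=(\varphi^{-1}\circ G)|_{\mathbb R}$ is a $p$-Weil--Petersson class homeomorphism of $\mathbb R$ and $\gamma=h\circ f$ with $h:=\varphi|_{\mathbb R}$. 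The remaining point in this step is that $h$ is a Riemann mapping parametrization of a $p$-Weil--Petersson curve, that is, $\varphi$ extends to a quasiconformal homeomorphism of $\mathbb C$ whose complex dilatation on $\mathbb L$ belongs to $\mathcal M_p(\mathbb L)$; comparing $\varphi|_{\mathbb R}$ with the boundary values of a conformal map $\mathbb L\to\Omega^-$, this reduces to the conformal welding of $\Gamma$ being a $p$-Weil--Petersson class homeomorphism, which is precisely the crucial ingredient of the Bers-type construction used in the proof of Lemma \ref{real-1} (carried out in \cite{WM-1}) and rests on the group structure of $T_p$ recorded in Theorem \ref{topgroup}.

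Granting this factorization, Lemma \ref{RM} gives that $h$ is locally absolutely continuous with $\log h'\in B_p(\mathbb R)$, and Lemma \ref{real-1} gives that $f$ is locally absolutely continuous with $\log f'\in B_p^{\mathbb R}(\mathbb R)\subset B_p(\mathbb R)$. Since $f$ is an increasing locally absolutely continuous homeomorphism and $h$ is locally absolutely continuous, $\gamma=h\circ f$ is locally absolutely continuous: on a bounded interval, absolute continuity of $f$ carries a finite union of intervals of small total length to one of small total length inside the $f$-image, where absolute continuity of $h$ applies (the same mechanism as in the proof of Lemma \ref{real-1}). Hence the chain rule $\gamma'=(h'\circ f)\,f'$ holds almost everywhere; since $\log\gamma'\in{\rm BMO}(\mathbb R)$ by Proposition \ref{derivative}, $\gamma'\neq 0$ almost everywhere, and taking logarithms yields
$$
\log\gamma'=(\log h')\circ f+\log f'=P_f(\log h')+\log f'
$$
almost everywhere on $\mathbb R$.

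Finally, $f$ is quasisymmetric, so by Proposition \ref{pullback} the variable change operator $P_f$ is a bounded linear isomorphism of $B_p(\mathbb R)$ onto itself; hence $P_f(\log h')\in B_p(\mathbb R)$. Adding $\log f'\in B_p^{\mathbb R}(\mathbb R)$ gives $\log\gamma'\in B_p(\mathbb R)$, which completes the proof.

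The only substantial step is the first one, namely producing the factorization $\gamma=h\circ f$ in which $h$ is genuinely a Riemann mapping parametrization of the image $p$-Weil--Petersson curve so that Lemma \ref{RM} applies; once that is available, the rest is a short computation. An alternative to the welding argument is to extend $\mu_G|_{\mathbb U}$ from $\mathbb U$ to all of $\mathbb C$ by reflection in $\mathbb R$, producing a quasiconformal self-homeomorphism $F$ of $\mathbb C$ that preserves $\mathbb R$ with $f=F|_{\mathbb R}$ a $p$-Weil--Petersson class homeomorphism, and then setting $H=G\circ F^{-1}$, which is automatically conformal on $\mathbb U$; verifying $\mu_H|_{\mathbb L}\in\mathcal M_p(\mathbb L)$ then uses $|\mu_H\circ F|\leq C(|\mu_G|+|\mu_F|)$ on $\mathbb L$ and reduces again to the boundedness of the pullback operator on $\mathcal M_p(\mathbb L)$ under $p$-Weil--Petersson class maps --- the same ingredient as before.
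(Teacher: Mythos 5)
Your argument is correct, and your \emph{primary} route is genuinely different from the paper's, while the ``alternative'' you sketch in your final paragraph is essentially the paper's own proof. The paper does not factor $\gamma$ through an arbitrary Riemann mapping of $\Omega^+=G(\mathbb U)$: it extends $\mu_G|_{\mathbb U}$ to $\mathbb L$ by reflection to obtain $F$ with $f=F|_{\mathbb R}$ handled by Lemma \ref{real-1}, and then takes $H$ conformal on $\mathbb U$ with complex dilatation $F_*\mu_2$ on $\mathbb L$, so that $H\circ F=G$ and $h=H|_{\mathbb R}$ is handled by Lemma \ref{RM}. The single delicate point there is that $F_*\mu_2\in\mathcal M_p(\mathbb L)$, which the paper secures by first replacing $F|_{\mathbb L}$ with a hyperbolically bi-Lipschitz quasiconformal extension (from \cite{WM-1}) and then reading off the explicit chain-rule formula for $F_*\mu_2$ --- exactly the ``boundedness of the pullback on $\mathcal M_p(\mathbb L)$'' you allude to, so your alternative is not merely analogous but identical in substance. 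Your welding route buys a more geometric picture (it works directly with the image domains $\Omega^\pm$) but is more roundabout and leaves two steps implicit that you should spell out: first, that the welding $\psi^{-1}\circ\varphi|_{\mathbb R}$ of $\Gamma$ (for a conformal $\psi:\mathbb L\to\Omega^-$) equals $f_2\circ f_1^{-1}$ with $f_1=\varphi^{-1}\circ G|_{\mathbb R}$ and $f_2=\psi^{-1}\circ G|_{\mathbb R}$ both in $W_p$, so that membership in $W_p$ needs the closure of $W_p$ under composition and inversion; and second, that passing from the existence of \emph{some} good sewing pair (the crucial step quoted in the proof of Lemma \ref{real-1}) back to the \emph{given} $\varphi$ requires uniqueness of the welding realization, i.e.\ conformal rigidity of quasicircles. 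Both are available but are genuine inputs. The remainder of your argument --- local absolute continuity of $h\circ f$ via the increasing absolutely continuous change of variable, the a.e.\ chain rule, and $\log\gamma'=P_f(\log h')+\log f'\in B_p(\mathbb R)$ via Proposition \ref{pullback} --- coincides with the paper's conclusion of the proof.
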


\begin{proof}
Let $G:\mathbb C \to \mathbb C$ be a $p$-Weil--Petersson quasiconformal homeomorphism 
associated with $\gamma$ such that
$\mu_1=\mu_G|_{\mathbb U} \in \mathcal M_p(\mathbb U)$ and $\mu_2=\mu_G|_{\mathbb L} \in \mathcal M_p(\mathbb L)$.
We take a quasiconformal homeomorphism $F:\mathbb C \to \mathbb C$ whose complex dilatation is
$\mu_1(z)$ for $z \in \mathbb U$ and $\overline{\mu_1(\bar z)}$ for $z \in \mathbb L$, 
which maps $\mathbb R$ onto itself.
By Lemma \ref{real-1}, $f=F|_{\mathbb R}$ is locally absolutely continuous and $\log f'$ belongs to $B_p^{\mathbb R}(\mathbb R)$.
Next, we take a quasiconformal homeomorphism $H:\mathbb C \to \mathbb C$ that is conformal on $\mathbb U$ and
whose complex dilatation on $\mathbb L$ is the push-forward $F_* \mu_2$ of $\mu_2$ by $F$. Namely, 
the complex dilatation of $H \circ F$ is $\mu_2$. Then, $H \circ F$ coincides with $G$ up to
an affine transformation of $\mathbb C$, and hence, we may assume that $H \circ F=G$.

We may replace $F|_{\mathbb L}$ with 
a bi-Lipschitz diffeomorphism 
under the hyperbolic metric 
whose complex dilatation $\tilde \mu_1$ belongs to $\mathcal M_p(\mathbb L)$ 
(see \cite[Theorem 6]{Cu}, \cite[Theorem 2.4]{Ya}, and \cite[Lemma 3.4]{WM-1}).
The complex dilatation of $H|_{\mathbb L}$ is explicitly given by
$$
F_* \mu_2(\zeta)=\frac{\mu_2(z)-\tilde \mu_1(z)}{1-\overline{\tilde \mu_1(z)}\mu_2(z)}\cdot \frac{F_z}{\overline{ F_z}} 
$$
for $\zeta=F(z) \in \mathbb L$. Using the property that $F$ is a bi-Lipschitz diffeomorphism, we see 
from this formula that
$F_* \mu_2$ also belongs to $\mathcal M_p(\mathbb L)$.
Then by Lemma \ref{RM}, $h=H|_{\mathbb R}$ is locally absolutely continuous with $\log h' \in B_p(\mathbb R)$. 

By $h \circ f=\gamma$, we see that $\gamma$ is also locally absolutely continuous, 
and taking the derivative,
we have 
$$
\log h' \circ f + \log f'=\log \gamma'.
$$
By $\log f' \in B_p(\mathbb R)$ and $\log h' \in B_p(\mathbb R)$ combined with Proposition \ref{pullback},
we obtain that $\log \gamma' \in B_p(\mathbb R)$. 
\end{proof}

We impose the normalization $\gamma(0)=0$ and $\gamma(1)=1$ (and $\gamma(\infty)=\infty$) on a
$p$-Weil--Petersson embedding $\gamma$. Let ${\rm WPC}_p$ be the set of all such normalized $p$-Weil--Petersson embeddings for $p \geq 1$.
We also denote the subset of ${\rm WPC}_p$ consisting of all normalized 
$p$-Weil--Petersson class homeomorphisms by $W_p$, and the subset consisting of all normalized 
Riemann mapping parametrizations of $p$-Weil--Petersson curves by $RM_p$.
For $\mu_1 \in \mathcal M_p(\mathbb U)$ and $\mu_2 \in \mathcal M_p(\mathbb L)$,
we denote by $G=G(\mu_1,\mu_2)$ the normalized $p$-Weil--Petersson quasiconformal homeomorphism of $\mathbb C$ 
($G(0)=0$, $G(1)=1$, and $G(\infty)=\infty$) with $\mu_G|_{\mathbb U}=\mu_1$ and $\mu_G|_{\mathbb L}=\mu_2$.
We define a map
$$
\widetilde \iota:\mathcal M_p(\mathbb U) \times \mathcal M_p(\mathbb L) \to {\rm WPC}_p
$$
by $\widetilde \iota(\mu_1,\mu_2)=G(\mu_1,\mu_2)|_{\mathbb R}$.
Then, by the famous argument of simultaneous uniformization due to Bers,
we see the following fact.

\begin{proposition}\label{id}
The space ${\rm WPC}_p$ of all normalized $p$-Weil--Petersson embeddings is identified with
$T_p(\mathbb U) \times T_p(\mathbb L)$ for $p \geq 1$. More precisely, $\widetilde \iota$ splits into a well-defined bijection
$$
\iota:T_p(\mathbb U) \times T_p(\mathbb L) \to {\rm WPC}_p
$$
by the product of the Teichm\"uller projections
$
\widetilde \pi:\mathcal M_p(\mathbb U) \times \mathcal M_p(\mathbb L) \to T_p(\mathbb U) \times T_p(\mathbb L)
$
such that $\widetilde \iota=\iota \circ \widetilde \pi$.
\end{proposition}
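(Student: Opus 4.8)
The plan is to verify two things: that $\widetilde\iota$ is constant on the fibers of $\widetilde\pi$, so that it descends to a well-defined map $\iota$, and that the resulting $\iota$ is a bijection. For the first point, suppose $(\mu_1,\mu_2)$ and $(\mu_1',\mu_2')$ satisfy $\widetilde\pi(\mu_1,\mu_2)=\widetilde\pi(\mu_1',\mu_2')$, i.e. $[\mu_1]=[\mu_1']$ in $T_p(\mathbb U)$ and $[\mu_2]=[\mu_2']$ in $T_p(\mathbb L)$. I must show $G(\mu_1,\mu_2)|_{\mathbb R}=G(\mu_1',\mu_2')|_{\mathbb R}$ as normalized embeddings. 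The standard Bers argument applies: the map $A=G(\mu_1',\mu_2')\circ G(\mu_1,\mu_2)^{-1}$ is quasiconformal on $\mathbb C$; on $G(\mu_1,\mu_2)(\mathbb U)$ it is conformal precisely when the complex dilatations of $G(\mu_1,\mu_2)$ and $G(\mu_1',\mu_2')$ agree on $\mathbb U$, which is not literally our hypothesis — the hypothesis is only Teichm\"uller equivalence. So instead I follow the classical route of first replacing $\mu_1'$ by $\mu_1$ within its class: since $[\mu_1]=[\mu_1']$, the boundary homeomorphisms of $F_{\mu_1}$ and $F_{\mu_1'}$ on $\mathbb R$ agree up to an affine map, and one checks that the normalized planar homeomorphism built by Bers simultaneous uniformization depends only on the pair of boundary classes, not on the chosen representatives. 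Concretely, $G(\mu_1,\mu_2)|_{\mathbb R}$ can be written as $h\circ f$ where $f=F_{\mu_1}|_{\mathbb R}\in W_p$ depends only on $[\mu_1]$ (by Lemma \ref{real-1} and uniqueness of solutions to the Beltrami equation up to affine normalization), and $h\in RM_p$ is the Riemann mapping parametrization associated with $[\mu_2]\ast[\mu_1]^{-1}$; both factors are invariant under changing representatives within the Teichm\"uller classes. Hence $\widetilde\iota=\iota\circ\widetilde\pi$ for a well-defined $\iota$.

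For surjectivity of $\iota$, I use Theorem \ref{curve} together with the sewing construction already carried out in its proof: any $\gamma\in{\rm WPC}_p$ arises from a $p$-Weil--Petersson quasiconformal $G:\mathbb C\to\mathbb C$ with $\mu_G|_{\mathbb U}=\mu_1\in\mathcal M_p(\mathbb U)$ and $\mu_G|_{\mathbb L}=\mu_2\in\mathcal M_p(\mathbb L)$, and after composing with the affine normalization we may take $G=G(\mu_1,\mu_2)$, so $\gamma=\widetilde\iota(\mu_1,\mu_2)=\iota([\mu_1],[\mu_2])$. For injectivity, suppose $\iota([\mu_1],[\mu_2])=\iota([\mu_1'],[\mu_2'])$, i.e. $G(\mu_1,\mu_2)|_{\mathbb R}=G(\mu_1',\mu_2')|_{\mathbb R}$ as maps $\mathbb R\to\mathbb C$. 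Restricting this identity and using the standard Bers uniqueness: the two planar homeomorphisms agree on $\mathbb R$, so $G(\mu_1',\mu_2')\circ G(\mu_1,\mu_2)^{-1}$ fixes the curve $\gamma(\mathbb R)$ pointwise and is quasiconformal off it, hence is conformal on each complementary Jordan domain and, being the identity on the boundary and at $\infty$, is the identity on $\mathbb C$. Therefore $G(\mu_1,\mu_2)=G(\mu_1',\mu_2')$ on all of $\mathbb C$, which forces $\mu_1=\mu_1'$ a.e. on $\mathbb U$ and $\mu_2=\mu_2'$ a.e. on $\mathbb L$, in particular $[\mu_1]=[\mu_1']$ and $[\mu_2]=[\mu_2']$.

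The one point requiring genuine care — and the main obstacle — is the well-definedness step, i.e. checking that the Bers-type construction $\widetilde\iota$ really is independent of the choice of representatives and not merely of the quasiconformal maps. I would make this precise by noting that if $\mu_1,\mu_1'$ are Teichm\"uller equivalent in $\mathcal M_p(\mathbb U)$, then $F_{\mu_1}$ and $F_{\mu_1'}$ have the same boundary values on $\mathbb R$ up to post-composition by an affine map of $\mathbb R$; similarly on $\mathbb L$. Since the normalized embedding $\gamma=G(\mu_1,\mu_2)|_{\mathbb R}$ is obtained, via the conformal sewing of Lemma \ref{real-1}, as $h_*^{-1}$-type data determined entirely by the boundary homeomorphisms $F_{\mu_1}|_{\mathbb R}$ and $F_{\mu_2}|_{\mathbb R}$ (and these depend only on the classes $[\mu_1]$, $[\mu_2]$, the affine ambiguity being absorbed by the normalization $\gamma(0)=0$, $\gamma(1)=1$), the value $\widetilde\iota(\mu_1,\mu_2)$ depends only on $([\mu_1],[\mu_2])$. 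This is exactly the content that the simultaneous uniformization picture packages cleanly, and it is why the identification ${\rm WPC}_p\cong T_p(\mathbb U)\times T_p(\mathbb L)$ holds; everything else is the routine Bers uniqueness argument.
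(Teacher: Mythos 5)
The paper offers no proof of this proposition beyond the appeal to Bers' simultaneous uniformization, so the question is whether your argument stands on its own. Your well-definedness step (via the decomposition $\gamma=h\circ f$, with $f$ determined by $[\mu_1]$ and $h$ by $[\mu_2]\ast[\mu_1]^{-1}$) and your surjectivity step are fine. The injectivity step, however, contains a genuine error. You assert that $A=G(\mu_1',\mu_2')\circ G(\mu_1,\mu_2)^{-1}$, being quasiconformal and fixing $\gamma(\mathbb R)$ pointwise, ``is conformal on each complementary Jordan domain'' and hence the identity on $\mathbb C$, forcing $\mu_1=\mu_1'$ a.e. This is false: a quasiconformal self-homeomorphism of a Jordan domain that is the identity on the boundary need not be conformal --- it is merely Teichm\"uller-trivial. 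Concretely, take $\mu_1\neq\mu_1'$ with $[\mu_1]=[\mu_1']$ and $\mu_2=\mu_2'=0$; then $G(\mu_1,0)|_{\mathbb L}$ and $G(\mu_1',0)|_{\mathbb L}$ are the same normalized conformal map (this is exactly Bers' lemma), so the two embeddings of $\mathbb R$ coincide, yet $\mu_1\neq\mu_1'$ and $A$ is not the identity on the upper complementary domain. Your conclusion proves strictly more than injectivity of $\iota$ requires, and what it proves is not true.

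The repair is short and uses machinery you already set up for well-definedness. If $G(\mu_1,\mu_2)|_{\mathbb R}=G(\mu_1',\mu_2')|_{\mathbb R}=\gamma$, then the image $\gamma(\mathbb R)$ and the choice of its ``upper'' complementary component $\Omega$ are determined by $\gamma$; the normalization at $0,1,\infty$ pins down the Riemann mapping $H:\mathbb U\to\Omega$, hence $h=H|_{\mathbb R}\in RM_p$ and then $f=h^{-1}\circ\gamma\in W_p$ are determined by $\gamma$ alone. Since $f$ determines $[\mu_1]=[\mu_1']$ (by definition of $T_p(\mathbb U)$ as boundary values of normalized self-maps), and $h$ determines the class $[\mu_2]\ast[\mu_1]^{-1}=[\mu_2']\ast[\mu_1']^{-1}$ (two dilatations on $\mathbb L$ whose associated conformal maps on $\mathbb U$ have the same normalized boundary values are Teichm\"uller equivalent --- the converse half of the same Bers lemma you use for well-definedness), you get $[\mu_2]=[\mu_2']$, which is all that injectivity asks for.
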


We call $T_p(\mathbb U) \times T_p(\mathbb L)$ the {\it Bers coordinates} of ${\rm WPC}_p$. Any
normalized $p$-Weil--Petersson embedding $\gamma$ is represented by a pair $([\mu_1],[\mu_2])$ of
the Teichm\"uller equivalence classes of $\mu_1 \in \mathcal M(\mathbb U)$ and $\mu_2 \in \mathcal M(\mathbb L)$
via $G(\mu_1,\mu_2)$.

We may provide complex Banach manifold structures for $T_p(\mathbb U)$ and $T_p(\mathbb L)$ by
using the pre-Schwarzian derivative models as in Theorem \ref{model} of Appendix. 
Namely, $T_p(\mathbb U)$ is identified with the domain
$\mathcal T_p(\mathbb L)$ 
of the analytic $p$-Besov space ${\mathcal B}_p(\mathbb L)$, and $T_p(\mathbb L)$ is identified with the domain $\mathcal T_p(\mathbb U)$
of ${\mathcal B}_p(\mathbb U)$ for $p>1$:
\begin{align*}
T_p(\mathbb U) &\cong \mathcal T_p(\mathbb L)=\{ {\mathcal L}_{G(\mu,0)} \in {\mathcal B}_p(\mathbb L) \mid \mu \in \mathcal M_p(\mathbb U)\};\\
T_p(\mathbb L) &\cong \mathcal T_p(\mathbb U)=\{ {\mathcal L}_{G(0,\mu)} \in {\mathcal B}_p(\mathbb U) \mid \mu \in \mathcal M_p(\mathbb L)\}.
\end{align*}
Then, by 
Proposition \ref{id}, we may also regard ${\rm WPC}_p$ as a domain 
of ${\mathcal B}_p(\mathbb L) \times {\mathcal B}_p(\mathbb U)$ for $p>1$.

By Theorem \ref{curve}, we can consider an injective map $L:{\rm WPC}_p \to B_p(\mathbb R)$ defined 
by $L(\gamma)=\log \gamma'$. Then, with respect to the complex structure of ${\rm WPC}_p$ given as above,
we see the following.

\begin{theorem}\label{holo}
The map $L:{\rm WPC}_p \to B_p(\mathbb R)$ is a holomorphic injection for $p>1$.
\end{theorem}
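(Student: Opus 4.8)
The plan is to factor $L$ through the pre-Schwarzian derivative models of the two Teichm\"uller factors and to exhibit it as a composition of maps each of which is either holomorphic by construction or bounded linear. Recall that under the Bers coordinates a point $\gamma \in {\rm WPC}_p$ corresponds to $([\mu_1],[\mu_2]) \in T_p(\mathbb U) \times T_p(\mathbb L)$, and by the decomposition used in the proof of Theorem \ref{curve} we may write $\gamma = h \circ f$, where $f = F|_{\mathbb R}$ with $F$ the quasiconformal self-map of $\mathbb C$ obtained by symmetrizing $\mu_1$, and $h = H|_{\mathbb R}$ with $H$ conformal on $\mathbb U$ and having complex dilatation $F_*\mu_2$ on $\mathbb L$. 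Taking logarithmic derivatives gives the identity
$$
L(\gamma) = \log \gamma' = P_f(\log h') + \log f' = P_f(b(\mathcal L_H)) + \log f',
$$
where $\mathcal L_H = \log H'$ is the pre-Schwarzian (up to the normalizing constant), $b$ is the boundary-value operator of Lemma \ref{131}, and $P_f$ is the variable-change operator of Proposition \ref{pullback}. So it suffices to check that each of the three "ingredient" maps $\gamma \mapsto \mathcal L_H \in \mathcal B_p(\mathbb U)$, $\gamma \mapsto \log f' \in B_p^{\mathbb R}(\mathbb R)$, and the bilinear plug-in $(f,\varphi) \mapsto P_f(b(\varphi))$ behaves holomorphically, and then assemble.

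First I would treat the factor through $RM_p \cong \{[0]\} \times T_p(\mathbb L)$. By the pre-Schwarzian model (Theorem \ref{model} of the Appendix), the assignment $[\mu_2] \mapsto \mathcal L_{G(0,\mu_2)} \in \mathcal B_p(\mathbb U)$ is a biholomorphism onto the domain $\mathcal T_p(\mathbb U)$; composing with the bounded linear boundary operator $b:\mathcal B_p(\mathbb U) \to B_p(\mathbb S) \cong B_p(\mathbb R)$ of Lemma \ref{131} shows $h \mapsto \log h'$ depends holomorphically on the $T_p(\mathbb L)$-coordinate. One has to be a little careful here: the conformal map whose pre-Schwarzian lands in $\mathcal T_p(\mathbb U)$ is $G(0,\mu_2)$, whereas the $H$ appearing above has dilatation $F_*\mu_2$ on $\mathbb L$; but $H$ and $G(0, [\mu_2]\ast[\mu_1]^{-1})$ agree as Riemann mapping parametrizations up to normalization, so the relevant coordinate is $[\mu_2]\ast[\mu_1]^{-1} = \Phi(\gamma)$, and holomorphic dependence of this on $([\mu_1],[\mu_2])$ follows from the complex-analytic group operations on $T_p$ (the right-translation structure; cf. the discussion around Theorem \ref{topgroup}). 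Likewise, $\gamma \mapsto [\mu_1] = \Pi(\gamma)$ is just projection to the first Bers factor, hence holomorphic, and $[\mu_1] \mapsto \log f' \in B_p^{\mathbb R}(\mathbb R)$ is holomorphic because it is the restriction of the analogous pre-Schwarzian-plus-boundary construction for the symmetrized map $F$, and the symmetrization $\mu_1 \mapsto (\mu_1 \text{ on } \mathbb U,\ \overline{\mu_1(\bar z)} \text{ on } \mathbb L)$ is complex-\emph{anti}linear in the second slot but the boundary values it produces are still governed by a holomorphic dependence on $[\mu_1]$ through the real-analytic identification $T_p(\mathbb U) \cong H^{1/2}$-type model of \cite{WM-1}; more cleanly, $f \in W_p$ corresponds to the diagonal $([\mu_1],[\mu_1])$ and $\log f'$ is the boundary difference $b(\mathcal L_{H_*}) - b(\mathcal L_H)$ arising from conformal sewing, each term depending holomorphically on $[\mu_1]$ by the model theorem.

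Finally I would combine the pieces through the bilinear map. Proposition \ref{pullback} gives, for each fixed quasisymmetric $f$, a bounded linear isomorphism $P_f$ of $B_p(\mathbb R)$; what is needed is joint continuity and, in fact, holomorphic dependence of $(f,\varphi)\mapsto P_f\varphi$ on the pair, with $f$ ranging over $W_p$ (or over the whole Teichm\"uller factor) and $\varphi$ over $B_p(\mathbb R)$. Since $L(\gamma)$ is written as $P_{f}(\log h') + \log f'$ with $f$ depending holomorphically on $[\mu_1]$ and $\log h'$ depending holomorphically on $([\mu_1],[\mu_2])$, and since composition of a holomorphic map with a uniformly-bounded holomorphically-varying family of linear operators is holomorphic, the conclusion follows. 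The main obstacle I anticipate is precisely this last point: proving that $f \mapsto P_f$ is holomorphic (not merely that each $P_f$ is bounded), i.e. controlling $\|P_{f_1} - P_{f_2}\|$ — or better, the complex differentiability of $f\mapsto P_f\varphi$ — as $f$ varies within $W_p$. One expects this to follow from the difference quotient $\tfrac{1}{t}\big(P_{f_t}\varphi - P_f\varphi\big) \to P_f(\dot f \cdot \varphi')$-type formulas together with the Besov-space estimates in \cite{B2,BS}, but making the convergence uniform on bounded sets — using that on $W_p$ the quasisymmetric constant (equivalently $d_\infty$) is locally bounded, so $\|P_f\|$ is locally bounded — is where the real work lies; the remark after Proposition \ref{pullback}, and Lemma \ref{general} referenced there, are presumably the tools for this.
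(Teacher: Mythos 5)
Your decomposition $L(\gamma)=P_f(b(\mathcal L_H))+\log f'$ and your choice of ingredients (the pre-Schwarzian model of Theorem \ref{model}, the boundary operator $b$ of Lemma \ref{131}, the pullback operator $P_f$ of Proposition \ref{pullback}) are exactly the ones the paper uses, but the assembly has a genuine gap, and one of your intermediate claims is false as stated. First, the map $[\mu_1]\mapsto\log f'\in B_p^{\mathbb R}(\mathbb R)$ is \emph{not} holomorphic: a holomorphic map from a connected domain in a complex Banach space into the real subspace $B_p^{\mathbb R}(\mathbb R)$ is necessarily constant (compose with continuous complex-linear functionals that are real-valued on $B_p^{\mathbb R}(\mathbb R)$; these separate points by Hahn--Banach, and a real-valued holomorphic function is constant). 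Indeed $f$ arises from the anti-holomorphic symmetrization $\mu_1\mapsto\overline{\mu_1(\bar z)}$, so this dependence is only real-analytic. Second, as you yourself flag, the holomorphy (or even joint continuity) of $(f,\varphi)\mapsto P_f\varphi$ in the $f$-variable is ``where the real work lies,'' and you do not supply it; the difference-quotient formula $P_f(\dot f\cdot\varphi')$ you propose requires substantial estimates that are not available from Proposition \ref{pullback} or the remark following it.

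The missing idea is the reduction to \emph{separate} holomorphy via the Hartogs theorem for Banach spaces, which is the first step of the paper's proof and removes both obstacles at once. Fixing $[\mu_1]$, the map $f$ and hence the operator $P_f$ become fixed bounded linear data and $\log f'$ becomes an additive constant; the only varying ingredient is $[\mu]\mapsto\mathcal L_{H_{F_*\mu}|_{\mathbb U}}\in\mathcal B_p(\mathbb U)$, which is holomorphic because $F_*$ (for a bi-Lipschitz extension $F$ of $f$) is a biholomorphic automorphism of $\mathcal M_p(\mathbb L)$ descending to $T_p(\mathbb L)$, and one then applies the fixed bounded linear maps $b$ and $P_f$. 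The case where $[\mu_2]$ is fixed and $[\mu_1]$ varies is treated symmetrically, using the decomposition through the lower half-plane so that the pullback operator attached to the varying factor is again constant. Without this reduction your outline does not close.
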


\begin{proof}
We will prove that $L$ is holomorphic at any point $\gamma=G(\mu_1,\mu_2)|_{\mathbb R}$ in ${\rm WPC}_p$.
Since ${\rm WPC}_p$ can be regarded as a domain of the product 
${\mathcal B}_p(\mathbb L) \times {\mathcal B}_p(\mathbb U)$ of the Banach spaces, 
the Hartogs theorem for Banach spaces (see \cite[Theorem 14.27]{Ch} and \cite[Theorem 36.8]{Mu}) implies that
we have only to prove that $L$ is separately holomorphic. Thus, 
by fixing $[\mu_1] \in T_p(\mathbb U)$, we will show that 
$\log (G(\mu_1,\mu)|_{\mathbb R})' \in B_p(\mathbb R)$ depends holomorphically on $[\mu] \in T_p(\mathbb L)$.
The other case is similarly treated.

By the proof of Theorem \ref{curve}, we have
$$
\log (G(\mu_1,\mu)|_{\mathbb R})' =\log \gamma'=\log h' \circ f + \log f'.
$$
We may choose a bi-Lipschitz diffeomorphism $F:\mathbb L \to \mathbb L$
that is the extension of 
$f: \mathbb R \to \mathbb R$  and
whose complex dilatation 
still belongs to $\mathcal M_p(\mathbb L)$ as before (see \cite[Lemma 3.4]{WM-1}).
Let
$h:\mathbb R \to \mathbb C$ be the restriction of
the quasiconformal homeomorphism $H_{F_*\mu}$ of $\mathbb C$ that is conformal on $\mathbb U$ and
has the complex dilatation $F_* \mu$ on $\mathbb L$. 
Since $F$ is a
bi-Lipschitz diffeomorphism, we see that $F_*$ acts on ${\mathcal M}_p(\mathbb L)$
as a biholomorphic automorphism, and its action projects down to $T_p(\mathbb L)$ 
also as a biholomorphic automorphism. For $p \geq 2$, this is shown in \cite[Chap.1, Corollary 2.12]{TT} and \cite[Proposition 5.3]{Ya},
and the same proof is valid for $p \geq 1$ once we know that $F_* \mu \in {\mathcal M}_p(\mathbb L)$ for every $\mu \in {\mathcal M}_p(\mathbb L)$
(see \cite[Lemma 3.1]{WM-1}). 
Continuity or local boundedness of $F_*$ is enough to show the holomorphy of $F_*$, which is also explained in
\cite[Proposition 3.1]{WM-2} in a similar setting.
Hence, ${\mathcal L}_{H_{F_*\mu}|_{\mathbb U}}=\log (H_{F_*\mu}|_{\mathbb U})'
\in {\mathcal B}_p(\mathbb U)$ depends on $[\mu] \in T_p(\mathbb L)$ holomorphically
as we see that $\alpha^{-1}: \mathscr T_p \cong T_p(\mathbb L) \to {\mathcal B}_p(\mathbb U)$ is holomorphic in the proof of Theorem \ref{model} in Appendix.

By Lemma \ref{131}, we see that the boundary extension $b:{\mathcal B}_p(\mathbb U) \to B_p(\mathbb R)$ is
a bounded linear operator for $p>1$. Moreover, by Proposition \ref{pullback}, the variable change operator
$P_f:B_p(\mathbb R) \to B_p(\mathbb R)$ induced by $f$ is also a bounded linear operator.
Therefore,
$$
\log h' \circ f=P_f \circ b ({\mathcal L}_{H_{F_*\mu}|_{\mathbb U}}) \in B_p(\mathbb R)
$$
in particular depends on $[\mu] \in T_p(\mathbb L)$ holomorphically, and so does $\log (G(\mu_1,\mu)|_{\mathbb R})'$.
\end{proof}

\section{Conformal welding and curve theoretic coordinates}

We introduce other coordinates of ${\rm WPC}_p$ and $L({\rm WPC}_p)$, and investigate their
relationship. To this end, we utilize the canonical automorphisms of ${\rm WPC}_p$. 

For $\nu \in {\mathcal M}_p(\mathbb U)$,
the same symbol $\nu$ still denotes the complex dilatation $\overline{\nu(\bar z)}$ for $z \in \mathbb L$ in 
${\mathcal M}_p(\mathbb L)$. This also gives the identification of $T_p(\mathbb U)$ and $T_p(\mathbb L)$,
which is often denoted by $T_p$ hereafter.
For any $[\nu] \in T_p$, we define the {\it right translation} of ${\rm WPC}_p$ for $p \geq 1$
by
$$
\widetilde R_{[\nu]}:([\mu_1],[\mu_2]) \mapsto ([\mu_1] \ast [\nu], [\mu_2] \ast [\nu]),
$$
where $R_{[\nu]}([\mu])=[\mu] \ast [\nu]$ is the composition of elements in $T_p$ that is given by the Teichm\"uller class
of the complex dilatation of $F^{\mu} \circ F^{\nu}$ for the normalized 
$p$-Weil--Petersson class homeomorphisms $F^{\mu}$ and $F^{\nu}$ of 
$\mathbb U$ (or $\mathbb L$) onto itself with the given complex dilatations.
It is known that the right translation $R_{[\nu]}$ is a biholomorphic automorphism of $T_p$ for $p \geq 1$
as is seen in the proof of Theorem \ref{holo}.
Hence, $\widetilde R_{[\nu]}$ gives a biholomorphic automorphism of ${\rm WPC}_p$.

First, we consider the {\it conformal welding coordinates} of ${\rm WPC}_p$ for $p \geq 1$.
Under the Bers coordinates ${\rm WPC}_p \cong T_p(\mathbb U) \times T_p(\mathbb L)$,
the subspace $W_p \subset {\rm WPC}_p$ is identified with the diagonal locus
$$
\{([\mu],[\mu]) \in T_p(\mathbb U) \times T_p(\mathbb L) \mid [\mu] \in T_p\}.
$$
Since this is the fixed point locus of the anti-holomorphic involution 
$([\mu_1],[\mu_2]) \mapsto ([\mu_2],[\mu_1])$, 
we see that $W_p$ is a real-analytic submanifold of ${\rm WPC}_p$. Moreover, the subspace $RM_p \subset {\rm WPC}_p$ 
is identified with the second coordinate axis
$$
\{([0],[\mu]) \in T_p(\mathbb U) \times T_p(\mathbb L) \mid [\mu] \in T_p\},
$$
which is a complex-analytic submanifold of ${\rm WPC}_p$. 

We define the projections to these submanifolds
$$
\Pi: {\rm WPC}_p \to W_p, \qquad \Phi: {\rm WPC}_p \to RM_p
$$
by $\Pi([\mu_1],[\mu_2])=([\mu_1],[\mu_1])$ and $\Phi([\mu_1],[\mu_2])=([0],[\mu_2] \ast [\mu_1]^{-1})$ in the Bers coordinates, where $[\mu]^{-1}$ is the inverse of an element in $T_p$ that is given by
the Teichm\"uller class of the complex dilatation $\mu^{-1}$ of $(F^\mu)^{-1}$. 
Then, every $\gamma \in {\rm WPC}_p$ 
is decomposed uniquely into $\gamma=\Phi(\gamma) \circ \Pi(\gamma)$.
This corresponds to the decomposition $\gamma=h \circ f$ in the proof of Theorem \ref{curve}.
Clearly, $\Pi$ is real-analytic. 
We see that $\Phi$ is continuous later by Theorem \ref{topgroup}. 
The biholomorphic automorphism $\widetilde R_{[\nu]}$ of ${\rm WPC}_p$ for $[\nu] \in T_p$
satisfies that $\Phi \circ \widetilde R_{[\nu]}=\Phi$.

The projections $\Pi$ and $\Phi$ defines another product structure $W_p \times RM_p$ on ${\rm WPC}_p$ for $p \geq 1$.
Namely, we have a bijection
$$
(\Pi,\Phi):{\rm WPC}_p \to W_p \times RM_p.
$$
Once we see that $\Phi$ is continuous, $(\Pi,\Phi)$ is a homeomorphism.
This is the
coordinate change of ${\rm WPC}_p$ from the Bers coordinates
to the one we may call the
conformal welding coordinates. Since $W_p$ and $RM_p$ are both identified with $T_p$, 
by marking $T_p$ with $W_p \cong T_p^W$ and $RM_p \cong T_p^{RM}$,
the coordinate change is expressed as
$$
T_p(\mathbb U) \times T_p(\mathbb L) \to T_p^W \times T_p^{RM}: \quad([\mu_1], [\mu_2]) \mapsto ([\mu_1],[\mu_2] \ast [\mu_1]^{-1}).
$$

Next, we consider the {\it curve theoretical coordinates} of the space of $p$-Weil--Petersson embeddings by using 
the image $L({\rm WPC}_p)$ in $B_p(\mathbb R)$ for $p>1$. 
We see that $L$ is injective because $\gamma$ can be reproduced from $w=\log \gamma' \in L({\rm WPC}_p)$ by
$$
\gamma(x)=\int_0^x e^{w(t)}dt.
$$
We have assumed that $B_p(\mathbb R)$ is the Banach space of all equivalence classes of 
complex-valued functions modulo additive constants,
and can also regard it as the set of representatives $w$ satisfying the normalization condition
$\int_0^1 e^{w(t)}dt=1$. For $w \in L({\rm WPC}_p)$, this is always possible by adding some complex constant to $w$.

Let $u \in B_p^{\mathbb R}(\mathbb R)$ and let
$\gamma_{u}:\mathbb R \to \mathbb R$ be the $p$-Weil--Petersson class homeo\-morphism in $W_p$
defined by $\gamma_{u}(x)=\int_0^x e^{u(t)}dt$. Then, the variable change operator
$P_{\gamma_u}:B_p(\mathbb R) \to B_p(\mathbb R)$ is given by $w \mapsto w \circ \gamma_{u}$ for $w \in B_p(\mathbb R)$,
which is a linear isomorphism of the Banach space $B_p(\mathbb R)$ onto itself by Proposition \ref{pullback}. 
Moreover, we define $Q_{u}(w)=P_{\gamma_u}(w)+u$ for $w \in B_p(\mathbb R)$, 
which is an affine isomorphism of $B_p(\mathbb R)$ onto itself. Since $P_{\gamma_u}$ preserves $B_p^{\mathbb R}(\mathbb R)$,
we see that $Q_u$ maps $u_0+iB_p^{\mathbb R}(\mathbb R)$ onto $u_1+iB_p^{\mathbb R}(\mathbb R)$
for some $u_1 \in B_p^{\mathbb R}(\mathbb R)$ depending on $u_0 \in B_p^{\mathbb R}(\mathbb R)$,
where $iB_p^{\mathbb R}(\mathbb R)$ denotes the real subspace of $B_p(\mathbb R)$ consisting of all purely imaginary functions
modulo complex-valued constant functions. 

We see that the affine isomorphism $Q_u$ of $B_p(\mathbb R)$ keeps the subset $L({\rm WPC}_p)$ invariant.
We often use the correspondence between $u \in B_p^{\mathbb R}(\mathbb R)$ and $[\nu] \in T_p$
through $\gamma_u \in W_p \cong T_p$.

\begin{proposition}\label{Lequation}
The right translation $\widetilde R_{[\nu]}$ satisfies  
$$L \circ \widetilde R_{[\nu]}=Q_{L([\nu],[\nu])} \circ L$$
on ${\rm WPC}_p$ for every $[\nu] \in T_p$. Hence, the affine isomorphism $Q_u$ of $B_p(\mathbb R)$
for any $u \in B_p^{\mathbb R}(\mathbb R)$ maps $L({\rm WPC}_p)$ onto itself.
\end{proposition}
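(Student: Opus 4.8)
The plan is to identify $\widetilde R_{[\nu]}(\gamma)$ concretely as a reparametrization of $\gamma$, and then to read off the effect on $L(\gamma)=\log\gamma'$ by the chain rule. Let $f_\nu\in W_p$ be the normalized $p$-Weil--Petersson class homeomorphism corresponding to the diagonal point $([\nu],[\nu])$ in the Bers coordinates; through the correspondence $u\leftrightarrow[\nu]$ via $\gamma_u\in W_p\cong T_p$ we have $f_\nu=\gamma_u$ with $u=L([\nu],[\nu])=\log f_\nu'\in B_p^{\mathbb R}(\mathbb R)$. First I would verify the identity
$$
\widetilde R_{[\nu]}(\gamma)=\gamma\circ f_\nu
$$
on ${\rm WPC}_p$. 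Writing $\gamma=\Phi(\gamma)\circ\Pi(\gamma)=h\circ f$ as in the proof of Theorem \ref{curve}, the relation $\Phi\circ\widetilde R_{[\nu]}=\Phi$ gives $\Phi(\widetilde R_{[\nu]}(\gamma))=h$, while $\Pi(\widetilde R_{[\nu]}(\gamma))=([\mu_1]\ast[\nu],[\mu_1]\ast[\nu])$ in the Bers coordinates, whose boundary homeomorphism is $f\circ f_\nu$ by the definition of the composition $\ast$ on $T_p$. Hence $\widetilde R_{[\nu]}(\gamma)=h\circ(f\circ f_\nu)=\gamma\circ f_\nu$, and this is already normalized since $\gamma$ and $f_\nu$ both fix $0$, $1$, $\infty$, so no affine correction appears. (Equivalently, one checks via the composition formula for Beltrami coefficients that $G(\mu_1,\mu_2)\circ F^\nu$ has Bers coordinates $([\mu_1]\ast[\nu],[\mu_2]\ast[\nu])$, which shows the same thing.)

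Next I would differentiate. By Theorem \ref{curve}, $\gamma\circ f_\nu=\widetilde R_{[\nu]}(\gamma)$ is locally absolutely continuous, and $\gamma$, $f_\nu$ are locally absolutely continuous with $f_\nu$ an increasing homeomorphism, so the chain rule applies exactly as in the proof of Theorem \ref{curve}, and using $f_\nu=\gamma_u$ together with Proposition \ref{pullback} we get
$$
\log(\gamma\circ f_\nu)'=(\log\gamma')\circ f_\nu+\log f_\nu'=P_{\gamma_u}(L(\gamma))+u=Q_u(L(\gamma)).
$$
Together with $\widetilde R_{[\nu]}(\gamma)=\gamma\circ f_\nu$ and $u=L([\nu],[\nu])$ this yields $L\circ\widetilde R_{[\nu]}=Q_{L([\nu],[\nu])}\circ L$, as claimed. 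For the last assertion, $\widetilde R_{[\nu]}$ is a bijection of ${\rm WPC}_p$ and $L$ is injective with image $L({\rm WPC}_p)$, so the identity just proved forces $Q_u(L({\rm WPC}_p))=L(\widetilde R_{[\nu]}({\rm WPC}_p))=L({\rm WPC}_p)$; and since $[\nu]\mapsto u=L([\nu],[\nu])$ exhausts $L(W_p)=B_p^{\mathbb R}(\mathbb R)$, this holds for $Q_u$ with every $u\in B_p^{\mathbb R}(\mathbb R)$.

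I expect the only delicate point to be the first step, namely pinning down $\widetilde R_{[\nu]}(\gamma)$ as exactly $\gamma\circ f_\nu$ rather than merely up to an affine map of $\mathbb C$: this is settled by tracking the Bers coordinates through the group law on $T_p$ and using that $\gamma\circ f_\nu$ fixes the normalization points $0$, $1$, $\infty$, while the legitimacy of the chain rule applied to $\gamma\circ f_\nu$ is guaranteed by the local absolute continuity supplied by Theorem \ref{curve}. Everything after that is formal bookkeeping.
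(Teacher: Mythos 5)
Your proposal is correct and follows essentially the same route as the paper: identify $\widetilde R_{[\nu]}(\gamma)$ with the reparametrization $\gamma\circ\gamma_{L([\nu],[\nu])}$ and then apply the chain rule to $\log(\gamma\circ f_\nu)'$ to obtain $Q_{L([\nu],[\nu])}$, with the second assertion following from surjectivity of $\widetilde R_{[\nu]}$ and the identification $L(W_p)=B_p^{\mathbb R}(\mathbb R)$. The only difference is that you justify the key identity $\gamma_{L([\mu_1]\ast[\nu],[\mu_2]\ast[\nu])}=\gamma_{L([\mu_1],[\mu_2])}\circ\gamma_{L([\nu],[\nu])}$ via the welding decomposition, whereas the paper asserts it directly from the definition of $\ast$; this is a welcome elaboration, not a deviation.
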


\begin{proof}
For any $([\mu_1],[\mu_2]) \in {\rm WPC}_p$, we have
\begin{equation*}
\begin{split}
L \circ \widetilde R_{[\nu]}([\mu_1],[\mu_2])&=L([\mu_1] \ast [\nu], [\mu_2] \ast [\nu])\\
&=\log \gamma'_{L([\mu_1] \ast [\nu], [\mu_2] \ast [\nu])}=\log (\gamma_{L([\mu_1],[\mu_2])}\circ \gamma_{L([\nu],[\nu])})'\\
&=\log \gamma'_{L([\mu_1],[\mu_2])}\circ \gamma_{L([\nu],[\nu])}+ \log \gamma'_{L([\nu],[\nu])}\\
&=L([\mu_1],[\mu_2]) \circ \gamma_{L([\nu],[\nu])}+L([\nu],[\nu])=Q_{L([\nu],[\nu])} \circ L([\mu_1],[\mu_2])
\end{split}
\end{equation*}
as required.
For any $u \in B_p^{\mathbb R}(\mathbb R)$, we choose $[\nu] \in T_p$ corresponding to 
$\gamma_u \in W_p$.
Namely, $L([\nu],[\nu])=u$. Then,
we have
$$
Q_u(L({\rm WPC}_p))=Q_{L([\nu],[\nu])} \circ L({\rm WPC}_p)=L \circ \widetilde R_{[\nu]}({\rm WPC}_p)=L({\rm WPC}_p),
$$
which proves the second statement.
\end{proof}

We define the following subset of $L({\rm WPC}_p)$ corresponding to the {\it arc-length parametrization}:
$$
iB_p^{\mathbb R}(\mathbb R) ^{\circ}=iB_p^{\mathbb R}(\mathbb R) \cap L({\rm WPC}_p).
$$
Let $iv \in iB_p^{\mathbb R}(\mathbb R) ^{\circ}$.
Then,
$\gamma_{iv}(x)=\int_0^x e^{iv(t)}dt$ is a $p$-Weil--Petersson embedding of arc-length parametrization.
Precisely speaking, by the normalization, $\gamma_{iv}$ is parametrized by
the multiple of its arc-length by a positive constant.
We can regard $iB_p^{\mathbb R}(\mathbb R) ^{\circ}$ as a parameter space of such $p$-Weil--Petersson embeddings for $p>1$.
All $p$-Weil--Petersson embeddings are obtained by the reparametrization of
their arc-length parametrizations as follows.

\begin{lemma}\label{arclength}
Let $u \in B_p^{\mathbb R}(\mathbb R)$ and $iv \in iB_p^{\mathbb R}(\mathbb R)^{\circ}$.
Then, $\gamma_{Q_u(iv)}(x)$ is obtained from the $p$-Weil--Petersson embedding $\gamma_{iv}(x')$ of arc-length parametrization
by the change of parameter $x'= \gamma_u(x)$, which is also 
a $p$-Weil--Petersson embedding. Conversely, every $p$-Weil--Petersson embedding is obtained in this way.
Hence, the map 
$$
J:B_p^{\mathbb R}(\mathbb R) \times iB_p^{\mathbb R}(\mathbb R)^{\circ} \to L({\rm WPC}_p) \subset B_p(\mathbb R)
$$
defined by $J(u,iv)=Q_u(iv)=u+iP_{\gamma_u}(v)$ is bijective.
\end{lemma}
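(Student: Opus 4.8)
The plan is to reduce the whole statement to the affine isomorphisms $Q_u$ and the chain rule, using Proposition \ref{Lequation} to keep all functions inside $L({\rm WPC}_p)$, and then to read off bijectivity of $J$ by separating real and imaginary parts.

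First I would check that $J$ is well defined and that it encodes the asserted reparametrization. Since $iv \in iB_p^{\mathbb R}(\mathbb R)^\circ \subseteq L({\rm WPC}_p)$, the embedding $\gamma_{iv} = L^{-1}(iv)$ lies in ${\rm WPC}_p$, and $|\gamma_{iv}'| = |e^{iv}| \equiv 1$, so $\gamma_{iv}$ is indeed an arc-length parametrization (up to the positive normalizing constant). Substituting $t = \gamma_u(\tau)$ in $\int_0^{\gamma_u(x)} e^{iv(t)}\,dt$ and using $\gamma_u(0) = 0$, $\gamma_u' = e^u$, together with $Q_u(iv) = P_{\gamma_u}(iv) + u = iv\circ\gamma_u + u$, gives $\gamma_{iv}(\gamma_u(x)) = \int_0^x e^{Q_u(iv)(\tau)}\,d\tau = \gamma_{Q_u(iv)}(x)$. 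By Proposition \ref{Lequation} the map $Q_u$ sends $L({\rm WPC}_p)$ onto itself, so $J(u,iv) = Q_u(iv) \in L({\rm WPC}_p)$ and $\gamma_{J(u,iv)} \in {\rm WPC}_p$; this is the first assertion, and it shows that $J$ takes values where claimed.

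For the converse I would take an arbitrary $\gamma \in {\rm WPC}_p$ and set $w := L(\gamma) = \log\gamma' \in L({\rm WPC}_p) \subseteq B_p(\mathbb R)$ (Theorem \ref{curve}). Put $u := {\rm Re}\,w$; this lies in $B_p^{\mathbb R}(\mathbb R)$ because $B_p(\mathbb R)$ is stable under taking the real part, and by the intrinsic characterization of the $p$-Weil--Petersson class (equivalently $L(W_p) = B_p^{\mathbb R}(\mathbb R)$, the converse of Lemma \ref{real-1}) we get $\gamma_u \in W_p$, so $P_{\gamma_u}$ is a linear automorphism of $B_p(\mathbb R)$ by Proposition \ref{pullback} and $Q_u$ an affine automorphism. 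Proposition \ref{Lequation} makes $Q_u$ a bijection of $L({\rm WPC}_p)$ onto itself, hence so is $Q_u^{-1}$, and therefore $iv := Q_u^{-1}(w) \in L({\rm WPC}_p)$. Since $Q_u^{-1}(w) = P_{\gamma_u}^{-1}(w - u)$, $P_{\gamma_u}^{-1}$ preserves real-valuedness, and ${\rm Re}(w-u) = 0$, this $iv$ is purely imaginary, i.e. $iv \in iB_p^{\mathbb R}(\mathbb R)^\circ$. Then $J(u,iv) = Q_u(iv) = w = L(\gamma)$, and injectivity of $L$ yields $\gamma = \gamma_{Q_u(iv)} = \gamma_{iv}\circ\gamma_u$, which is the "conversely" claim and the surjectivity of $J$.

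Injectivity of $J$ is then immediate: in $J(u,iv) = u + iP_{\gamma_u}(v)$ both $u$ and $P_{\gamma_u}(v)$ are real-valued, so ${\rm Re}\,J(u,iv) = u$ and ${\rm Im}\,J(u,iv) = P_{\gamma_u}(v)$; hence $J(u_1,iv_1) = J(u_2,iv_2)$ forces $u_1 = u_2$ and then $P_{\gamma_{u_1}}(v_1) = P_{\gamma_{u_1}}(v_2)$, so $v_1 = v_2$ by injectivity of $P_{\gamma_{u_1}}$. I expect the only genuine difficulty to be in the converse step — recognizing $u = {\rm Re}\,\log\gamma'$ as the logarithmic derivative of the normalized arc-length reparametrization and using the intrinsic description of $W_p$ to place $\gamma_u$ in $W_p$, plus the routine bookkeeping that elements of $B_p(\mathbb R)$ are taken modulo additive constants; everything else is formal once Proposition \ref{Lequation} is in hand.
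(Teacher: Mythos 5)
Your proposal is correct and follows essentially the same route as the paper: the same change-of-variables computation $\gamma_{Q_u(iv)}=\gamma_{iv}\circ\gamma_u$, the same appeal to Proposition \ref{Lequation} to keep everything inside $L({\rm WPC}_p)$, and the same converse obtained by splitting $L(\gamma)$ into real and imaginary parts and applying $Q_u^{-1}$. Your extra details (invoking the intrinsic characterization $L(W_p)=B_p^{\mathbb R}(\mathbb R)$ to place $\gamma_u$ in $W_p$, and spelling out injectivity via ${\rm Re}\,J(u,iv)=u$) are correct and merely make explicit what the paper leaves implicit.
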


\begin{proof}
Since $Q_u(iv)=u+iP_{\gamma_u}(v)=u+iv \circ \gamma_u$, we have
\begin{equation*}
\begin{split}
\gamma_{Q_u(iv)}(x)=\int_0^x e^{u(t)} e^{iv \circ \gamma_u(t)}dt
=\int_0^x \gamma'_u(t) e^{iv \circ \gamma_u(t)}dt
=\int_0^{\gamma_u(x)} e^{iv(s)}ds=\gamma_{iv}(\gamma_u(x))
\end{split}
\end{equation*}
by $s=\gamma_u(t)$. Proposition \ref{Lequation} implies that the reparametrization of a $p$-Weil--Petersson embedding is
also a $p$-Weil--Petersson embedding. Conversely,
let $\gamma_{u+iv'}$ be any $p$-Weil--Petersson embedding for $u+iv' \in L({\rm WPC}_p)$. 
Then, by choosing 
$v \in B_p^{\mathbb R}(\mathbb R)$ satisfying $P_{\gamma_u}(v)=v'$, we see that $\gamma_{u+iv'}$ is obtained
from $\gamma_{iv}$ by the change of the parameter. Then, $\gamma_{iv}$ is a $p$-Weil--Petersson embedding,
and hence $iv \in iB_p^{\mathbb R}(\mathbb R)^{\circ}$.
\end{proof}

We will see later in Theorem \ref{Jhomeo} that the above bijection $J$ is in fact a homeomorphism.

Now, we have two product structures $W_p \times RM_p$ and $B_p^{\mathbb R}(\mathbb R) \times iB_p^{\mathbb R}(\mathbb R)^{\circ}$
on ${\rm WPC}_p$ for $p>1$. There is a close relation between these structures through $L$ and $J$.
Each fiber of the projection $\Phi$ consists of a family of embeddings with the same image, 
and hence their arc-length parametrizations are
the same. This observation leads the following.

\begin{proposition}\label{fibers}
For any $iv \in iB_p^{\mathbb R}(\mathbb R)^{\circ}$, let 
$\gamma=\Phi \circ L^{-1} \circ J(0,iv) \in RM_p$. 
Then, the fiber $\Phi^{-1}(\gamma)$ coincides with $L^{-1}\circ J(B_p^{\mathbb R}(\mathbb R) \times \{iv\})$, which is the family of normalized
$p$-Weil--Petersson embeddings with the same image $\gamma(\mathbb R)$.
\end{proposition}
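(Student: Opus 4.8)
\textbf{Proof proposal for Proposition \ref{fibers}.}
The plan is to chase the definitions through the Bers coordinates and then use Proposition \ref{Lequation} (equivalently Lemma \ref{arclength}) to identify the two sets. First I would fix $iv \in iB_p^{\mathbb R}(\mathbb R)^{\circ}$ and set $\gamma_0 = L^{-1}\circ J(0,iv) = \gamma_{iv} \in IW_p$, the arc-length parametrization; write its Bers coordinates as $\gamma_0 = ([0],[\mu_2])$ after applying $\Phi$, so that $\gamma := \Phi(\gamma_0) = ([0],[\mu_2]) \in RM_p$ (note $\Phi$ fixes elements of $RM_p$, and more relevantly $\Phi(\gamma_0)$ has the same image curve as $\gamma_0$, since $\Phi$ only reparametrizes). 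The goal is the set equality
$$
\Phi^{-1}(\gamma) \;=\; L^{-1}\circ J\bigl(B_p^{\mathbb R}(\mathbb R) \times \{iv\}\bigr).
$$

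For the inclusion $\supseteq$: given $u \in B_p^{\mathbb R}(\mathbb R)$, Lemma \ref{arclength} says $L^{-1}\circ J(u,iv) = \gamma_{Q_u(iv)}$ is the reparametrization of $\gamma_{iv}$ by the change of variable $x' = \gamma_u(x)$, i.e. $\gamma_{Q_u(iv)} = \gamma_{iv}\circ \gamma_u$ as curves with the same image $\gamma_{iv}(\mathbb R)$. In Bers coordinates this is precisely the right translation: if $\gamma_u$ corresponds to $[\nu] \in T_p$, then by Proposition \ref{Lequation} we have $L^{-1}\circ J(u,iv) = \widetilde R_{[\nu]}(\gamma_0)$. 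Since $\Phi\circ \widetilde R_{[\nu]} = \Phi$ (stated in Section 4 just after the definition of $\Phi$), we get $\Phi(L^{-1}\circ J(u,iv)) = \Phi(\gamma_0) = \gamma$, so the point lies in $\Phi^{-1}(\gamma)$.

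For the reverse inclusion $\subseteq$: take any $\delta \in \Phi^{-1}(\gamma)$ and write $\delta = L^{-1}(u + iv')$ with $u + iv' \in L({\rm WPC}_p)$, $u \in B_p^{\mathbb R}(\mathbb R)$. By Proposition \ref{pullback} the operator $P_{\gamma_u}$ is an isomorphism of $B_p(\mathbb R)$ preserving $B_p^{\mathbb R}(\mathbb R)$, so there is a unique $v \in B_p^{\mathbb R}(\mathbb R)$ with $P_{\gamma_u}(v) = v'$, hence $u + iv' = Q_u(iv) = J(u,iv)$; then by the $\supseteq$ direction just proved, $\Phi(\delta) = \Phi(\gamma_{iw})$ where $iw$ is the arc-length datum of $\delta$. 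It remains to identify $iw$ with $iv$: both $\gamma_0$ and $\delta$ lie in $\Phi^{-1}(\gamma)$, so their images coincide with $\gamma(\mathbb R)$, hence their arc-length parametrizations agree up to the normalization, giving $iw = iv$ (using the uniqueness of the arc-length parametrization in $IW_p$, which is the content of the identification $IW_p = L^{-1}(iB_p^{\mathbb R}(\mathbb R)^{\circ})$ together with Lemma \ref{arclength}). Thus $\delta = L^{-1}\circ J(u,iv)$ lies in the right-hand side, and the fiber equality follows. The final sentence — that this fiber is exactly the family of normalized $p$-Weil--Petersson embeddings sharing the image $\gamma(\mathbb R)$ — then reduces to the observation that two normalized embeddings have the same image if and only if one is a reparametrization of the other by an element of $W_p$, i.e. they lie in the same $\widetilde R_{[\nu]}$-orbit, which is exactly a $\Phi$-fiber.

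\textbf{Main obstacle.} The one point that needs genuine care rather than bookkeeping is the claim that membership in a single $\Phi$-fiber is equivalent to having the same image curve — i.e., that the arc-length datum $iv$ is a complete invariant of the image among normalized $p$-Weil--Petersson embeddings. This requires knowing that every $p$-Weil--Petersson curve admits a (unique, up to the fixed normalization) arc-length parametrization lying again in ${\rm WPC}_p$, which is exactly $IW_p = L^{-1}(iB_p^{\mathbb R}(\mathbb R)^{\circ})$; once that is in hand, the equivalence is immediate from Lemma \ref{arclength}. Everything else is a direct translation between the Bers coordinates, the right-translation action, and the curve-theoretic coordinates via Propositions \ref{Lequation} and \ref{pullback}.
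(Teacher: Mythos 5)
Your proof is correct and follows exactly the route the paper intends: the paper states this proposition without a formal proof, relying on the observation that every $\delta\in\Phi^{-1}(\gamma)$ factors as $\delta=\Phi(\delta)\circ\Pi(\delta)$ with $\Pi(\delta)\in W_p$ a homeomorphism of $\mathbb R$ (so all members of a fiber share the image $\gamma(\mathbb R)$ and hence the same normalized arc-length datum), combined with Lemma \ref{arclength} and Proposition \ref{Lequation} via the right translations $\widetilde R_{[\nu]}$. Your filling-in of the uniqueness of the normalized arc-length parametrization is the right point to flag and is handled correctly.
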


\section{Biholomorphic correspondence}

{\em All the results in this section are stated under the assumption $p>1$}.
We prove the main theorem in this section as follows.

\begin{theorem}\label{biholo}
The holomorphic map $L:{\rm WPC}_p \to B_p(\mathbb R)$ is a biholomorphic homeomorphism onto 
its image. In particular, $L({\rm WPC}_p)$ is an open contractible domain of $B_p(\mathbb R)$
which contains $B_p^{\mathbb R}(\mathbb R)$.
\end{theorem}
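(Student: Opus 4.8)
The plan is to show that $L$ is holomorphic (already done in Theorem \ref{holo}), injective (already observed above via $\gamma(x)=\int_0^x e^{w(t)}dt$), and that it admits a local holomorphic inverse at every point of its image; the latter is the crux. Since $L$ is injective and holomorphic, once we exhibit, near each $\gamma_0 = G(\mu_1,\mu_2)|_{\mathbb R} \in {\rm WPC}_p$, a holomorphic map defined on a neighborhood of $w_0 = L(\gamma_0)$ in $B_p(\mathbb R)$ that serves as a right inverse to $L$, the inverse function theorem for holomorphic maps between Banach manifolds gives that $L$ is a biholomorphism onto an open subset $L({\rm WPC}_p) \subset B_p(\mathbb R)$. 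The openness and the biholomorphic homeomorphism statement follow immediately. Contractibility of $L({\rm WPC}_p)$ will then be obtained by exhibiting an explicit contraction: the straight-line homotopy $w \mapsto tw$ stays inside $L({\rm WPC}_p)$ because each $tw$ still corresponds to a $p$-Weil--Petersson embedding (scaling $\log \gamma'$ by $t \in [0,1]$ keeps the relevant integrability; more safely, one uses the known contractibility of $T_p(\mathbb U) \times T_p(\mathbb L)$ together with the fact that $L$ is a homeomorphism onto its image). Finally, $B_p^{\mathbb R}(\mathbb R) \subset L({\rm WPC}_p)$ because every $u \in B_p^{\mathbb R}(\mathbb R)$ is $\log f'$ for the $p$-Weil--Petersson class homeomorphism $f = \gamma_u \in W_p \subset {\rm WPC}_p$, by Lemma \ref{real-1} and the characterization of $W_p$ cited before it.

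The main work is constructing the local holomorphic right inverse. Here I would exploit the product structures already set up. Using $L \circ \widetilde R_{[\nu]} = Q_{L([\nu],[\nu])} \circ L$ from Proposition \ref{Lequation}, and the fact that the affine isomorphisms $Q_u$ of $B_p(\mathbb R)$ preserve $L({\rm WPC}_p)$ and depend nicely on $u$, it suffices to construct the local inverse near a single well-chosen base point and then transport it by right translations. Concretely, I would first treat points of $RM_p$, i.e. $w_0 = \log h'$ with $h$ a Riemann mapping parametrization: here the Bers embedding / pre-Schwarzian derivative model (Theorem \ref{model} and Lemma \ref{131}) gives that $\log H' \in \mathcal B_p(\mathbb U)$ depends holomorphically on the Teichm\"uller parameter, and conversely the boundary extension $b : \mathcal B_p(\mathbb U) \to B_p(\mathbb R)$ is a bounded linear isomorphism onto its image with bounded inverse (Lemma \ref{131}), so on the slice $\{[0]\} \times T_p(\mathbb L)$ the map $L$ is essentially the Bers coordinate map composed with $b$, hence biholomorphic onto an open piece of $b(\mathcal B_p(\mathbb U))$. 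For a general $\gamma_0 = G(\mu_1,\mu_2)|_{\mathbb R}$, write $\gamma_0 = h \circ f$ with $f = \Pi(\gamma_0) \in W_p$ and $h = \Phi(\gamma_0) \in RM_p$ as in the proof of Theorem \ref{curve}; then $\widetilde R_{[\mu_1]^{-1}}$ moves $\gamma_0$ to $h \in RM_p$, and since $L \circ \widetilde R_{[\mu_1]^{-1}} = Q_u \circ L$ with $u = -L(\Pi(\gamma_0),\Pi(\gamma_0))$ (an affine isomorphism of $B_p(\mathbb R)$), the local inverse of $L$ near $w_0$ is $\widetilde R_{[\mu_1]} \circ (\text{local inverse of } L \text{ near } L(h)) \circ Q_u$, which is holomorphic because $\widetilde R_{[\mu_1]}$ is biholomorphic on ${\rm WPC}_p$ and $Q_u$ is affine bounded.

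For the slice $RM_p$ computation, the key technical point is that $L$ restricted to $RM_p \cong \{[0]\} \times T_p(\mathbb L)$ has holomorphic inverse. Going through the pre-Schwarzian model, a point of $T_p(\mathbb L)$ corresponds to $\mathcal L_{H|_{\mathbb U}} = \log H' \in \mathcal B_p(\mathbb U)$, and $L(h) = \log h' = b(\log H')$; by Lemma \ref{131}, $b$ is a linear isomorphism of $\mathcal B_p(\mathbb U)$ onto the closed subspace $b(\mathcal B_p(\mathbb U)) \subset B_p(\mathbb R)$, with bounded inverse. So the restriction of $L$ to $RM_p$ is the composition of the Bers-type identification $RM_p \cong \mathcal T_p(\mathbb U) \subset \mathcal B_p(\mathbb U)$ (biholomorphic, by Theorem \ref{model}) with the bounded linear isomorphism $b$, hence biholomorphic onto an open subset of $b(\mathcal B_p(\mathbb U))$; its inverse is $(b|_{\mathcal B_p(\mathbb U)})^{-1}$ followed by the inverse Bers chart, both holomorphic. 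This is exactly the ingredient that makes the whole argument go through, and I expect verifying that $b$ is an isomorphism \emph{onto its image with bounded inverse} (as opposed to merely bounded) — together with checking that the image $L(RM_p)$ is genuinely open inside that image subspace — to be the most delicate point; it is precisely where $p>1$ enters through Lemma \ref{131} and the boundedness of the Riesz--Szeg\"o projection.
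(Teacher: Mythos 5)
Your overall architecture (holomorphy from Theorem \ref{holo}, global injectivity, a local holomorphic right inverse at one family of base points, then transport by $\widetilde R_{[\nu]}$ and $Q_u$ via Proposition \ref{Lequation}) is exactly the paper's, and your transport step is sound. But there is a genuine gap in your base case. What is needed at a point $w_0\in L({\rm WPC}_p)$ is a holomorphic map $\Psi$ defined on an \emph{open neighborhood of $w_0$ in $B_p(\mathbb R)$} with $L\circ\Psi={\rm id}$; equivalently, the derivative of $L$ must be a surjection onto all of $B_p(\mathbb R)$. Your construction on the slice $RM_p\cong\{[0]\}\times T_p(\mathbb L)$ only inverts $L$ along that slice: by Lemma \ref{131} the image $L(RM_p)$ lies in $b(\mathcal B_p(\mathbb U))$, which is a \emph{proper closed subspace} of $B_p(\mathbb R)$ (the ``analytic'' boundary values), so a relatively open subset of it is nowhere dense in $B_p(\mathbb R)$ and an inverse of $L|_{RM_p}$ cannot be transported by the affine maps $Q_u$ into an inverse of $L$ on a genuine neighborhood. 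Said differently, your argument controls $d L$ only in the $T_p(\mathbb L)$-direction; the complementary directions (which, at the origin, would have to be supplied by the $B_p^{\mathbb R}(\mathbb R)$-direction together with the Hilbert transform to fill out all of $B_p(\mathbb R)$) are never addressed. This surjectivity is precisely the hard analytic content: the paper imports it from Shen--Wu \cite[Theorem 6.1]{SW} (built on Semmes' arguments), which produces a holomorphic right inverse $\Psi_{iv}$ on a full neighborhood $V_{iv}$ of each \emph{arc-length} point $iv\in iB_p^{\mathbb R}(\mathbb R)^\circ$, and only then transports to general points exactly as you do. So the correct base family is $iB_p^{\mathbb R}(\mathbb R)^\circ$, not $L(RM_p)$, and the base-case inverse cannot be obtained by the soft linear-isomorphism argument you propose.

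Two smaller points. Your first suggestion for contractibility, the straight-line homotopy $w\mapsto tw$, is not justified (there is no reason $tw$ should stay in $L({\rm WPC}_p)$); your fallback via the contractibility of $T_p(\mathbb U)\times T_p(\mathbb L)$ is what the paper uses and is correct. Also, in the transport formula the translating function is $u=L([\mu_1]^{-1},[\mu_1]^{-1})=\log((f^{-1})')=-P_{f^{-1}}(\log f')$, not $-L(\Pi(\gamma_0),\Pi(\gamma_0))$; this is harmless for the argument but should be stated correctly.
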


\begin{proof}
In virtue of Theorem \ref{holo}, 
in order to prove that $L$ is biholomorphic,
it suffices to show that $L$ has a local holomorphic inverse at 
any point $w \in L({\rm WPC}_p) \subset B_p(\mathbb R)$.
This in particular shows that $L({\rm WPC}_p)$ is open.

It is proved in \cite[Theorem 6.1]{SW} based on the arguments in \cite{Se}
that if $w=iv \in iB_p^{\mathbb R}(\mathbb R)^{\circ}$ for $p=2$,
then there is a neighborhood $V_{iv} \subset L({\rm WPC}_p)$ of $iv$ and
a holomorphic map $\lambda_{iv}:V_{iv} \to \mathcal M_p(\mathbb U) \times \mathcal M_p(\mathbb L)$ 
such that $L \circ \widetilde \pi \circ \lambda_{iv}={\rm id}|_{V_{iv}}$.
For general $p >1$, the proof is essentially the same.

By \cite[Lemma 4.11]{Se} and \cite[Proposition 5.3]{SW}, the quasiconformal homeomorphism $G$ of $\mathbb C$ onto iteself defined by 
the complex dilatation $\lambda_{iv}(iv) \in  \mathcal M_p(\mathbb U) \times \mathcal M_p(\mathbb L)$
is bi-Lipschitz in the Euclidean metric on $\mathbb C$. This implies that both $G|_{\mathbb U}$ and $G|_{\mathbb L}$
are bi-Lipschitz with respect to the hyperbolic metrics on $\mathbb U$, $\mathbb L$, and their images.
In this case, the product of the Teichm\"uller projections 
$\widetilde \pi:\mathcal M_p(\mathbb U) \times \mathcal M_p(\mathbb L) \to T(\mathbb U) \times T(\mathbb L)$
is continuous at $\lambda_{iv}(iv)$, and in fact differentiable.
This is shown in \cite[Lemma 3.2]{WM-1} for $p \geq 1$, although the Teichm\"uller projection is known to be holomorphic
for $p \geq 2$ in \cite[Theorem 3.1]{Tang}. Let $\Psi_{iv}=\widetilde \pi \circ \lambda_{iv}$ and $\gamma=\Psi_{iv}(iv) \in {\rm WPC}_p$.
By $L \circ \Psi_{iv}={\rm id}|_{V_{iv}}$, their derivatives satisfy $d_{\gamma}L \circ d_{iv} \Psi_{iv}={\rm id}$.
Then, the inverse function theorem (see \cite[Theorem 7.18]{Ch}) asserts that $L$ has a local holomorphic inverse on $V_{iv}$ by choosing the domain smaller if necessary. 

If $w=u+iv'$ is an arbitrary point in $L({\rm WPC}_p)$, then we utilize 
$Q_u$, and find $iv \in iB_p^{\mathbb R}(\mathbb R)^\circ$ with $Q_u(iv)=u+iv'$ by Lemma \ref{arclength}.
Since $Q_u$ is a biholomorphic automorphism of $L({\rm WPC}_p)$ by Proposition \ref{Lequation},
we see that $\widetilde R_{[\nu]}\circ \Psi_{iv} \circ Q_u^{-1}$ is holomorphic on $Q_u(V_{iv})$
for $[\nu] \in T_p$ corresponding to $\gamma_u \in W_p$. Then, Proposition \ref{Lequation} implies that
$$
L \circ \widetilde R_{[\nu]} \circ \Psi_{iv} \circ Q_u^{-1}=Q_{L([\nu],[\nu])} \circ L \circ \Psi_{iv} \circ Q_u^{-1}
=Q_{L([\nu],[\nu])} \circ Q_u^{-1}=\rm id
$$
on $Q_u(V_{iv})$. Hence, $\widetilde R_{[\nu]}\circ \Psi_{iv} \circ Q_u^{-1}$ is 
a local holomorphic inverse of $L$ on $Q_u(V_{iv})$.

We know that the $p$-Weil--Petersson Teichm\"uller space $T_p$ is contractible by \cite[Theorem 6]{Cu} for $p=2$, by \cite[Proposition 3.5]{Ya}
for $p \geq 2$, and by the next Corollary \ref{real-analytic} (1) implying the homeomorphic identification $T_p \cong W_p \cong B_p^{\mathbb R}(\mathbb R)$
%\cite[Corollary 1.4]{WM-3} 
for general $p >1$. Hence, the product $T_p(\mathbb U) \times T_p(\mathbb L)$
is contractible, and so is $L({\rm WPC}_p)$.
\end{proof}

\begin{remark}
The space $L({\rm WPC}_p)$ is denoted by 
$\widehat{\mathcal T}_e$ in \cite[Theorem 2.5]{SW} in the case of $p=2$ and proved that it is
a contractible open domain in $H^{1/2}(\mathbb R)=B_2(\mathbb R)$.
It is also shown in \cite[Theorem 2.2]{SW} that $iB_2^{\mathbb R}(\mathbb R)^{\circ}$,
the parameter space for Weil--Petersson curves with arc-length parametrization, 
coincides with an open subset of
$iB_2^{\mathbb R}(\mathbb R)$ consisting of all elements corresponding to chord-arc curves with
arc-length parametrization. This result can be generalized to any $p >1$.
\end{remark}

Let $IW_p \subset {\rm WPC}_p$ denote the subset of all arc-length parametrizations of normalized 
$p$-Weil--Petersson embeddings. Namely,
$$
IW_p=L^{-1}(iB_p^{\mathbb R}(\mathbb R)^{\circ}).
$$
As $iB_p^{\mathbb R}(\mathbb R)^{\circ}$ is a real-analytic submanifold of 
the domain $L({\rm WPC}_p)$ in the complex Banach space $B_p(\mathbb R)$,
$IW_p$ is a real-analytic submanifold of the complex manifold ${\rm WPC}_p$.
By $W_p=L^{-1}(B_p^{\mathbb R}(\mathbb R))$, we again see that
$W_p$ is a real-analytic submanifold of ${\rm WPC}_p$.

\begin{corollary}\label{real-analytic}
$(1)$
$W_p$ is a real-analytic submanifold of ${\rm WPC}_p$ that is
the diagonal of $T_p(\mathbb U) \times T_p(\mathbb L)$, and
$L|_{W_p}$ 
is a real-analytic homeomorphism onto $B_p^{\mathbb R}(\mathbb R)$ whose inverse 
is also real-analytic.
$(2)$
$IW_p$ is a real-analytic submanifold of ${\rm WPC}_p$, and
$L|_{IW_p}$ 
is a real-analytic homeomorphism onto $iB_p^{\mathbb R}(\mathbb R)^\circ$ whose inverse 
is also real-analytic.
\end{corollary}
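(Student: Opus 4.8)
The plan is to deduce both statements directly from Theorem \ref{biholo}, which already provides the hard analytic input (the biholomorphic homeomorphism $L:{\rm WPC}_p \to L({\rm WPC}_p)$, obtained via the local holomorphic sections $\Psi_{iv}$), together with the two identifications $W_p = L^{-1}(B_p^{\mathbb R}(\mathbb R))$ and $IW_p = L^{-1}(iB_p^{\mathbb R}(\mathbb R)^\circ)$ recorded just above the corollary. The underlying general principle I would invoke is standard in the Banach-manifold setting: a biholomorphic homeomorphism between complex Banach manifolds is in particular a bi-real-analytic diffeomorphism, and its restriction to a real-analytic submanifold of the source is a bi-real-analytic diffeomorphism onto the corresponding real-analytic submanifold of the image. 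This is exactly what upgrades the complex-analytic statement of Theorem \ref{biholo} to the real-analytic assertions here.

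For part (1), I would first recall from Section 4 that $W_p$ is already known to be a real-analytic submanifold of ${\rm WPC}_p$: in the Bers coordinates it is the diagonal $\{([\mu],[\mu])\}\subset T_p(\mathbb U)\times T_p(\mathbb L)$, the fixed-point locus of the anti-holomorphic involution $([\mu_1],[\mu_2])\mapsto([\mu_2],[\mu_1])$. Next I would verify $L(W_p)=B_p^{\mathbb R}(\mathbb R)$: the inclusion $L(W_p)\subseteq B_p^{\mathbb R}(\mathbb R)$ is Lemma \ref{real-1}, since for $f\in W_p$ the function $\log f'$ is real-valued and lies in $B_p^{\mathbb R}(\mathbb R)$, while for the reverse inclusion any $u\in B_p^{\mathbb R}(\mathbb R)$ yields $\gamma_u(x)=\int_0^x e^{u(t)}dt\in W_p$ with $L(\gamma_u)=u$, using that $B_p^{\mathbb R}(\mathbb R)\subseteq L({\rm WPC}_p)$ by Theorem \ref{biholo}. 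Since $B_p^{\mathbb R}(\mathbb R)$ is a closed real-linear subspace of the complex Banach space $B_p(\mathbb R)$, hence a real-analytic submanifold, and $L$ is a bi-real-analytic diffeomorphism onto the open set $L({\rm WPC}_p)$, the restriction $L|_{W_p}:W_p\to B_p^{\mathbb R}(\mathbb R)$ is a homeomorphism whose inverse is real-analytic.

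For part (2) the argument is parallel. The set $iB_p^{\mathbb R}(\mathbb R)^\circ = iB_p^{\mathbb R}(\mathbb R)\cap L({\rm WPC}_p)$ is an open subset of the closed real-linear subspace $iB_p^{\mathbb R}(\mathbb R)$, hence a real-analytic submanifold of the domain $L({\rm WPC}_p)$. Pulling back by the bi-real-analytic diffeomorphism $L$, its preimage $IW_p=L^{-1}(iB_p^{\mathbb R}(\mathbb R)^\circ)$ is a real-analytic submanifold of ${\rm WPC}_p$, and $L|_{IW_p}:IW_p\to iB_p^{\mathbb R}(\mathbb R)^\circ$ is a homeomorphism whose inverse is real-analytic.

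I do not expect a substantial obstacle: all the genuine difficulty has been absorbed into Theorem \ref{biholo}. The only points needing a little care are (a) making explicit that "biholomorphic homeomorphism onto the image" upgrades to "bi-real-analytic diffeomorphism respecting real-analytic submanifold structure," which is routine in the Banach-manifold setting, and (b) checking the set-theoretic identities $L(W_p)=B_p^{\mathbb R}(\mathbb R)$ and $L(IW_p)=iB_p^{\mathbb R}(\mathbb R)^\circ$, the latter being essentially the definition of $IW_p$.
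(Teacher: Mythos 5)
Your proposal is correct and follows essentially the same route as the paper: the corollary is deduced directly from Theorem \ref{biholo} together with the identifications $W_p=L^{-1}(B_p^{\mathbb R}(\mathbb R))$ and $IW_p=L^{-1}(iB_p^{\mathbb R}(\mathbb R)^{\circ})$, noting that a biholomorphic homeomorphism restricts to a bi-real-analytic one on the preimage of a real-linear (or relatively open real-linear) submanifold. Your extra verification that $L(W_p)=B_p^{\mathbb R}(\mathbb R)$ via Lemma \ref{real-1} and the reconstruction $\gamma_u(x)=\int_0^x e^{u(t)}\,dt$ matches what the paper takes for granted.
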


The real analytic property of $L|_{W_p}$ has been shown in \cite[Theorem 2.3]{ST} in the case of $p=2$
by a different method. 
Part (1) of the above corollary shows that $W_p$ is equipped with both the complex-analytic structure of $T_p$
and the real-analytic structure of $B_p^{\mathbb R}(\mathbb R)$, which are real-analytically equivalent.
Later in Theorem \ref{IWp}, we will see that $IW_p$ is canonically equipped with the complex-analytic structure of $T_p$
which is topologically equivalent to the real-analytic structure of $iB_p^{\mathbb R}(\mathbb R)^\circ$.

In \cite[Theorem 4.4]{WM-3}, we construct a holomorphic map $\Lambda:U(B^{\mathbb R}_p(\mathbb R)) \to \mathcal M_p(\mathbb U)$
on some neighborhood $U(B^{\mathbb R}_p(\mathbb R))$ of the real-valued subspace $B^{\mathbb R}_p(\mathbb R)$ for $p>1$.
In the same way, we have the correspondence to the complex dilatations on $\mathbb L$. Thus,
we can extend $\Lambda$ to a holomorphic map
$$
\widetilde \Lambda:U(B^{\mathbb R}_p(\mathbb R)) \to \mathcal M_p(\mathbb U) \times \mathcal M_p(\mathbb L).
$$
%Its image turns out to be a complex analytic submanifold of $\mathcal M_p(\mathbb U) \times \mathcal M_p(\mathbb L)$.
This induces the inverse of $L$ on the neighborhood $U(B^{\mathbb R}_p(\mathbb R))$ as is shown in \cite[Theorem 4.5]{WM-3}.

\begin{theorem}\label{section}
The neighborhood $U(B^{\mathbb R}_p(\mathbb R))$ is contained in $L({\rm WPC}_p)$,
and 
$$\widetilde \pi \circ \widetilde \Lambda:U(B^{\mathbb R}_p(\mathbb R)) \to {\rm WPC}_p$$
is the inverse of $L$ on $U(B^{\mathbb R}_p(\mathbb R))$ which is holomorphic. 
%Moreover, 
%$\widetilde \Lambda(U(B^{\mathbb R}_p(\mathbb R)))$ is a complex submanifold of $\mathcal M_p(\mathbb U) \times \mathcal M_p(\mathbb L)$.
\end{theorem}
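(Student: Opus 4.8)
The plan is to combine the holomorphic construction $\widetilde\Lambda$ from \cite{WM-3} with the biholomorphy of $L$ established in Theorem \ref{biholo}. First I would recall what \cite[Theorems 4.4 and 4.5]{WM-3} give us: for every $w\in U(B_p^{\mathbb R}(\mathbb R))$ the Beltrami coefficients $\widetilde\Lambda(w)=(\mu_1,\mu_2)\in\mathcal M_p(\mathbb U)\times\mathcal M_p(\mathbb L)$ are constructed (via the heat-kernel variant of the Beurling--Ahlfors extension of the map $\gamma_{\operatorname{Re} w}$ together with the conformal welding piece on $\mathbb L$) so that the associated normalized $p$-Weil--Petersson quasiconformal homeomorphism $G(\mu_1,\mu_2)$ restricts on $\mathbb R$ to the embedding $x\mapsto\int_0^x e^{w(t)}\,dt$. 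In other words, $\widetilde\pi\circ\widetilde\Lambda(w)=G(\mu_1,\mu_2)|_{\mathbb R}\in{\rm WPC}_p$ and $L\bigl(\widetilde\pi\circ\widetilde\Lambda(w)\bigr)=\log\bigl(G(\mu_1,\mu_2)|_{\mathbb R}\bigr)'=w$. This already yields both assertions: the point $w$ lies in $L({\rm WPC}_p)$ because it is the $L$-image of an honest element of ${\rm WPC}_p$, so $U(B_p^{\mathbb R}(\mathbb R))\subset L({\rm WPC}_p)$; and the identity $L\circ(\widetilde\pi\circ\widetilde\Lambda)={\rm id}$ on $U(B_p^{\mathbb R}(\mathbb R))$ exhibits $\widetilde\pi\circ\widetilde\Lambda$ as a right inverse of $L$ there.

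To upgrade "right inverse" to "the inverse of $L$ on $U(B_p^{\mathbb R}(\mathbb R))$" I would argue that $L$ is injective (already noted in Section 4 via the reproduction formula $\gamma(x)=\int_0^x e^{w}$), hence on $L({\rm WPC}_p)$ the map $L$ has the globally defined inverse $L^{-1}$ from Theorem \ref{biholo}; restricting the identity $L\circ(\widetilde\pi\circ\widetilde\Lambda)={\rm id}$ and applying $L^{-1}$ on the left gives $\widetilde\pi\circ\widetilde\Lambda=L^{-1}|_{U(B_p^{\mathbb R}(\mathbb R))}$. The holomorphy of $\widetilde\pi\circ\widetilde\Lambda$ is immediate: $\widetilde\Lambda$ is holomorphic by construction in \cite{WM-3}, and the product Teichm\"uller projection $\widetilde\pi$ is holomorphic with respect to the complex manifold structures on $T_p(\mathbb U)$ and $T_p(\mathbb L)$ (this is part of the defining property of those structures, cited in Section 2). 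So the composition is a holomorphic map $U(B_p^{\mathbb R}(\mathbb R))\to{\rm WPC}_p$, and it coincides with $L^{-1}$ on this open set.

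The one point requiring care—and the main obstacle—is verifying cleanly that the object produced by $\widetilde\Lambda$ genuinely lands in $\mathcal M_p(\mathbb U)\times\mathcal M_p(\mathbb L)$ and that its welding restriction on $\mathbb R$ is exactly $\gamma_w$ rather than $\gamma_{\operatorname{Re} w}$ composed with something; this is precisely the content imported from \cite[Theorems 4.4, 4.5]{WM-3}, so I would state it as a citation rather than reprove it. A second, milder subtlety is the compatibility of the complex structure on $U(B_p^{\mathbb R}(\mathbb R))\subset B_p(\mathbb R)$ with the complex structure on ${\rm WPC}_p\cong T_p(\mathbb U)\times T_p(\mathbb L)$ transported through the Bers/pre-Schwarzian models—but since Theorem \ref{holo} already shows $L$ is holomorphic for these same structures and Theorem \ref{biholo} shows it is biholomorphic onto its image, there is nothing new to check here: the statement is just the local description of $L^{-1}$ near the real axis, now made explicit by the formula $\widetilde\pi\circ\widetilde\Lambda$.
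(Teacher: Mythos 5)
Your proposal matches the paper's treatment: the paper gives no separate proof of Theorem \ref{section}, relying entirely on the citation of \cite[Theorems 4.4 and 4.5]{WM-3} for the construction of $\widetilde\Lambda$ and the identity $L\circ(\widetilde\pi\circ\widetilde\Lambda)={\rm id}$, exactly as you do. Your additional remarks on upgrading the right inverse via injectivity of $L$ and on the holomorphy of $\widetilde\pi$ are correct bookkeeping consistent with the paper's framework.
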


We compare the arc-length parametrizations $IW_p$ with the Riemann mapping para\-metri\-zations $RM_p$. 
Both are the sets of all representatives of $p$-Weil--Petersson curves,
which follows from Proposition \ref{fibers}.
Hence, there is a canonical bijection between $IW_p$ and $RM_p$ giving the change of the representatives,
namely, keeping the images of the corresponding embeddings the same. 
For the projection $\Phi:{\rm WPC}_p \to RM_p$,
this bijection is nothing but its restriction $\Phi|_{IW_p}:IW_p\to RM_p$.
We will see that $\Phi |_{IW_p}$ is a homeomorphism by Proposition \ref{continuity} in the next section. 

Here, we consider the other projection $\Pi$ restricted to $IW_p$,
which has been studied with great interest in the literature.
For any $\gamma \in IW_p$, $\Pi(\gamma) \in W_p$ is defined by
the $p$-Weil--Petersson class homeomorphism inducing
the parameter change from $\gamma$ to $\Phi(\gamma) \in RM_P$. 
We will prove the bi-real-analytic property of this mapping.
For the space of chord-arc curves, this property for the corresponding map
was proved in \cite[Theorem 1]{CM} by operator theoretical arguments. 
The first statement of the following theorem that $\lambda$ is real-analytic in the case of $p=2$
is in \cite[Theorem 7.1]{SW}. 

\begin{theorem}\label{CM}
The map $\Pi|_{IW_p}:IW_p \to W_p$ is real-analytic. Hence,
$$
\lambda=L \circ \Pi \circ L^{-1}|_{iB_p^{\mathbb R}(\mathbb R)^\circ}:iB_p^{\mathbb R}(\mathbb R)^\circ \to B_p^{\mathbb R}(\mathbb R)
$$ 
is also real-analytic. Moreover, $\lambda$ is injective and the inverse $\lambda^{-1}$ is real-analytic.
Namely, $\lambda$ is a real-analytic homeomorphism onto an open subset of $B_p^{\mathbb R}(\mathbb R)$ whose inverse is
also real-analytic. This is true for $\Pi|_{IW_p}$.
\end{theorem}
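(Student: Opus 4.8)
The plan is to establish the three claims of Theorem~\ref{CM} in sequence: (i) $\Pi|_{IW_p}$ is real-analytic; (ii) $\lambda = L\circ\Pi\circ L^{-1}|_{iB_p^{\mathbb R}(\mathbb R)^\circ}$ is real-analytic; (iii) $\lambda$ is injective with real-analytic inverse. For (i), recall that by the Bers coordinates $\Pi([\mu_1],[\mu_2])=([\mu_1],[\mu_1])$, so $\Pi$ is real-analytic on all of ${\rm WPC}_p$; restricting to the real-analytic submanifold $IW_p=L^{-1}(iB_p^{\mathbb R}(\mathbb R)^\circ)$ (Corollary~\ref{real-analytic}(2)) preserves this. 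For (ii), since $L|_{W_p}$ and $L|_{IW_p}$ are real-analytic homeomorphisms with real-analytic inverses onto $B_p^{\mathbb R}(\mathbb R)$ and $iB_p^{\mathbb R}(\mathbb R)^\circ$ respectively (Corollary~\ref{real-analytic}), the map $\lambda = (L|_{W_p})\circ(\Pi|_{IW_p})\circ(L|_{IW_p})^{-1}$ is a composition of real-analytic maps, hence real-analytic. So the first two statements are essentially immediate from what has been assembled; this is the point the introduction emphasizes (``the real-analycity of $\lambda$ already follows from our arguments immediately'').

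The substantive part is (iii): injectivity of $\lambda$ and real-analyticity of $\lambda^{-1}$. For injectivity, I would argue geometrically: if $\lambda(iv_1)=\lambda(iv_2)=u$, then $L^{-1}(iv_1)$ and $L^{-1}(iv_2)$ are both arc-length parametrizations, and both are reparametrized to Riemann-mapping parametrizations by the \emph{same} $f=\gamma_u\in W_p$. But by Proposition~\ref{fibers}, the arc-length parametrization of a given $p$-Weil--Petersson curve is unique, and the Riemann mapping parametrization within a fiber $\Phi^{-1}(\gamma)$ is unique; since $\Pi$ and $\Phi$ together determine $\gamma\in{\rm WPC}_p$, recovering $f$ and $h=\Phi(\gamma)$ recovers $\gamma$, and then $\gamma_0=L^{-1}(iv_j)$ is the (unique) arc-length parametrization of $h(\mathbb R)$, forcing $iv_1=iv_2$. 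Concretely: from $\gamma = h\circ f$ with $f$ prescribed, $h = \gamma\circ f^{-1}$, and $\gamma_0$ is determined by $h$; tracking the welding relation $\log h'\circ f + \log f' = \log\gamma'$ shows the arc-length datum is a function of $(u, h)$, and since $\lambda$ also fixes $h$ (via $\Phi\circ\Pi|_{IW_p}^{-1}$, which is $\Phi|_{IW_p}$ up to the bijection $IW_p\cong RM_p$), injectivity follows.

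For $\lambda^{-1}$ real-analytic, the clean route is to invoke the original Coifman--Meyer theorem~\cite{CM} as the paper suggests: once $\lambda$ is known to be real-analytic and injective, and once the image $\lambda(iB_p^{\mathbb R}(\mathbb R)^\circ)$ is identified as an open subset of $B_p^{\mathbb R}(\mathbb R)$, the inverse map is the Coifman--Meyer correspondence sending a chord-arc-type parametrization datum back to the arc-length datum, whose real-analyticity is exactly the content of their theorem. The main obstacle — and the step requiring genuine care — is to check that openness of the image: this needs the inverse function theorem in the Banach category, i.e., that the derivative $d\lambda$ at each point $iv$ is a Banach-space isomorphism of the relevant tangent spaces $iB_p^{\mathbb R}(\mathbb R)\to B_p^{\mathbb R}(\mathbb R)$. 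Since $\lambda$ factors through $\Pi$ whose differential is the projection $([\dot\mu_1],[\dot\mu_2])\mapsto([\dot\mu_1],[\dot\mu_1])$, and through the real-analytic diffeomorphisms $L|_{W_p}$, $L|_{IW_p}$, one reduces to showing that the composite linear map between the model tangent spaces is invertible; here one uses that the tangent space to ${\rm WPC}_p$ splits as the sum of the tangent spaces to $W_p$ (the $B_p^{\mathbb R}$-directions) and to $RM_p$, and that $IW_p$ meets each $\Phi$-fiber transversally in exactly one point (Proposition~\ref{fibers}), which forces $d(\Pi|_{IW_p})$ to be a bijection onto $T W_p$. Alternatively, if one prefers to stay self-contained, invertibility of $d\lambda$ can be read off the identity $J(u,iv) = u + iP_{\gamma_u}(v)$ from Lemma~\ref{arclength} together with the variable-change boundedness of Proposition~\ref{pullback}. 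Having both $d\lambda$ an isomorphism and $\lambda$ injective, the inverse function theorem gives $\lambda$ a real-analytic local inverse everywhere, hence (being globally injective) $\lambda$ is a real-analytic diffeomorphism onto the open set $\lambda(iB_p^{\mathbb R}(\mathbb R)^\circ)$; transporting back through $L$ yields the same statement for $\Pi|_{IW_p}$.
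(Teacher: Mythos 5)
Your parts (i) and (ii) coincide with the paper's proof and are fine. The problems are in (iii), at exactly the two places where the real work happens. For injectivity, your geometric argument does not close: knowing that $\gamma_1=h_1\circ f$ and $\gamma_2=h_2\circ f$ share the same $f=\Pi(\gamma_j)$ determines nothing unless you also show $h_1=h_2$, and your justification for that (``$\lambda$ also fixes $h$ via $\Phi\circ\Pi|_{IW_p}^{-1}$'') presupposes the invertibility you are trying to establish. The missing ingredient is the harmonic-conjugate relation. Taking real and imaginary parts of $\log h'\circ f+\log f'=iv$ gives ${\rm Re}\log h'\circ f=-\log f'$ and $v={\rm Im}\log h'\circ f$; since $h$ extends conformally to $\mathbb U$ one has ${\rm Im}\log h'={\mathcal H}({\rm Re}\log h')$, and combining these yields the identity $v=-P_f\circ{\mathcal H}\circ P_f^{-1}(\log f')$, which exhibits $v$ as a function of $f$ alone. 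That identity \emph{is} the injectivity proof, and you never invoke the Hilbert transformation anywhere.

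For the invertibility of $d_w\lambda$, your argument fails: you deduce it from the fact that $IW_p$ meets each $\Phi$-fiber in exactly one point (Proposition \ref{fibers}), but set-theoretic bijectivity of a real-analytic map does not imply that its differential is invertible (consider $x\mapsto x^3$), so ``which forces $d(\Pi|_{IW_p})$ to be a bijection onto $TW_p$'' is a non sequitur --- this is precisely the hard content of the theorem, not a formal consequence of the fiber structure. The fallback via $J(u,iv)=u+iP_{\gamma_u}(v)$ does not help either: $J$ encodes the product structure $W_p\times IW_p$, not the welding map $iv\mapsto\log f'$. The paper's actual route is: (a) quote Semmes' chord-arc result that the BMO-level map $\tilde\lambda$ is bi-real-analytic, so $d_w\tilde\lambda$ is an isomorphism of the BMO spaces; (b) show $d_w\tilde\lambda|_{iB_p^{\mathbb R}(\mathbb R)}=d_w\lambda$ using $\Vert\cdot\Vert_*\leq\Vert\cdot\Vert_{B_p}$, which gives injectivity of $d_w\lambda$; (c) for surjectivity, prove the key claim that if $\tilde\lambda(w)\in B_p^{\mathbb R}(\mathbb R)$ then $w\in iB_p^{\mathbb R}(\mathbb R)^\circ$ --- again via the identity $(\ast)$ together with the boundedness of $\mathcal H$ and $P_f$ on $B_p(\mathbb R)$ (Lemma \ref{Hilbert} and Proposition \ref{pullback}) --- and then pull a segment $\lambda(w)+tu$ back through $\tilde\lambda^{-1}$ to get a curve that stays in $iB_p^{\mathbb R}(\mathbb R)^\circ$ and whose tangent vector maps to $u$. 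Without step (c), your appeal to Coifman--Meyer only yields real-analyticity of the inverse in the BMO topology, not in $B_p^{\mathbb R}(\mathbb R)$.
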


\begin{proof}
By Corollary 
\ref{real-analytic}, 
$IW_p$ is a real-analytic submanifold of ${\rm WPC}_p$.
Hence, the restriction $\Pi|_{IW_p}$ of the projection $\Pi:{\rm WPC}_p \to W_p$ is real-analytic.
Since $L$ is biholomorphic by 
Theorem \ref{biholo}, 
the conjugate map $\lambda$ is real-analytic.

We will prove the real-analycity of the inverse of $\lambda$.
To this end, we use the corresponding result for the space of chord-arc curves with 
the notation being the same as in \cite{WM-0}.
The definitions of the corresponding subspaces can be found in this paper, but they are not so
essential in the arguments below.
Based on Propositions \ref{pBel} and \ref{derivative}, we see that
the space ${\rm CA}$ of all BMO embeddings with chord-arc images contains ${\rm WPC}_p$.
Then,
there are the inclusion relations of subspaces 
$$W_p \subset {\rm SQS},\ IW_p \subset {\rm ICA},\
RM_p \subset {\rm RM}^\circ 
$$ 
in ${\rm WPC}_p \subset {\rm CA}$. We also have the inclusion relations of subspaces
$$
B_p^{\mathbb R}(\mathbb R) \subset {\rm BMO}_{\mathbb R}^*(\mathbb R),\
iB_p^{\mathbb R}(\mathbb R)^\circ \subset i{\rm BMO}_{\mathbb R}(\mathbb R)^\circ
$$ 
in $B_p(\mathbb R) \subset {\rm BMO}(\mathbb R)$ by Proposition \ref{VMOBp}, where 
${\rm BMO}_{\mathbb R}^*(\mathbb R)$ stands for the space of all real-valued BMO functions $u$
with $e^u$ being an $A_\infty$-weight.

The corresponding map to $\lambda$ between these larger spaces is denoted by
$\tilde \lambda:i{\rm BMO}_{\mathbb R}(\mathbb R)^\circ \to {\rm BMO}_{\mathbb R}^*(\mathbb R)$.
Then, $\lambda=\tilde \lambda|_{iB_p^{\mathbb R}(\mathbb R)^\circ}$.
It is known that $\tilde \lambda$ is a real-analytic homeomorphism onto an open subset of 
${\rm BMO}_{\mathbb R}^*(\mathbb R)$ whose inverse is
also real-analytic (see \cite[Theorem 5]{Se0}).

First of all,
we prove the injectivity of $\lambda$. This is the same as the case of the space of chord-arc curves.
Every $\gamma_0 \in IW_p$ is decomposed uniquely into $\gamma_0=h \circ f$ for $h \in RM_p$ and $f \in W_p$.
Taking the logarithm of the derivative of this equation, we have
\begin{equation*}\label{logder}
\log \gamma_0'= \log h' \circ f+\log f'.
\end{equation*}
Since $\log \gamma_0'=iv$ is purely imaginary and $\log f'$ is real, the real and the imaginary parts of this equation
become
\begin{equation*}\label{ReIm}
0={\rm Re} \log h' \circ f +\log f'\quad {\rm and} \quad v={\rm Im} \log h' \circ f.
\end{equation*}
Moreover, since $\log h'$ is the boundary extension of the holomorphic function $\log H'$ for the Riemann mapping $H$ on $\mathbb U$,
${\rm Re}\log h'$ and ${\rm Im}\log h'$ are related by the Hilbert transformation $\mathcal H$ on $\mathbb R$:
\begin{equation*}
{\rm Im}\log h'={\mathcal H}({\rm Re}\log h').
\end{equation*}
Then, the combination of these equations yields that
\begin{equation}\label{injective}
-P_f\circ {\mathcal H} \circ P_f^{-1}(\log f')=v.
\tag{$\ast$}
\end{equation}
This shows that $v$ is determined by $f$ and thus $\lambda:\log \gamma_0' \mapsto \log f'$ is injective.

\begin{claim}\label{inverse}
Suppose that we have the decomposition
$\tilde \gamma_0=\tilde h \circ \tilde f$ of $\tilde \gamma_0 \in {\rm ICA}$ 
by $\tilde h \in {\rm RM}^\circ$ and $\tilde f \in {\rm SQS}$. In this situation, if $\tilde f$
belongs to $W_p$, then we obtain $\tilde h \in RM_p$ and $\tilde \gamma_0 \in IW_p$.
\end{claim}

\begin{proof}
The formula corresponding to \eqref{injective} reads as
\begin{equation}\label{injective2}
-P_{\tilde f}\circ {\mathcal H} \circ P_{\tilde f}^{-1}(\log \tilde f')=\tilde v.
\tag{$**$}
\end{equation}
Here, $\tilde f \in W_p$ implies $\log \tilde f' \in B_p^{\mathbb R}(\mathbb R)$.
Moreover, since $P_{\tilde f}$ preserves $B_p^{\mathbb R}(\mathbb R)$ by Proposition \ref{pullback}, we have 
$P_{\tilde f}^{-1}(\log \tilde f') \in B_p^{\mathbb R}(\mathbb R)$.
By Lemma \ref{Hilbert}, the Hilbert transformation $\mathcal H$ maps $B_p^{\mathbb R}(\mathbb R)$
to $B_p^{\mathbb R}(\mathbb R)$.
This implies  
${\mathcal H} \circ P_{\tilde f}^{-1}(\log \tilde f') \in B_p^{\mathbb R}(\mathbb R)$. By applying $P_{\tilde f}$ again,
we see that the left side of \eqref{injective2} is in $B_p^{\mathbb R}(\mathbb R)$, and hence 
$\tilde v \in B_p^{\mathbb R}(\mathbb R)$. Since $\log \tilde \gamma_0'=i \tilde v$, we have 
$\tilde \gamma_0 \in IW_p$ and thus $\tilde h=\tilde \gamma_0 \circ \tilde f^{-1} \in RM_p$.
\end{proof}

By the conjugation of $L$,
this claim is equivalent to saying that if $\tilde \lambda(w) \in B_p^{\mathbb R}(\mathbb R)$ for 
$w \in i{\rm BMO}_{\mathbb R}(\mathbb R)^\circ$ then $w \in iB_p^{\mathbb R}(\mathbb R)^\circ$.

We move to the investigation of the derivatives of $\lambda$ and $\tilde \lambda$.
We note the following two facts:
(1) As $\lambda$ is real-analytic, the derivative $d_w\lambda:iB_p^{\mathbb R}(\mathbb R) \to B_p^{\mathbb R}(\mathbb R)$
is a bounded linear operator at every point $w$ of the domain of $\lambda$;
(2) As $\tilde \lambda$ is real-analytic and $\tilde \lambda^{-1}$ is also
real-analytic, the derivative $d_{\tilde w}\tilde \lambda:i{\rm BMO}_{\mathbb R}(\mathbb R) \to {\rm BMO}_{\mathbb R}(\mathbb R)$
is a surjective bounded linear isomorphism at every point $\tilde w$ of the domain of $\tilde \lambda$.

\begin{claim}\label{restriction}
$d_{w} \tilde \lambda|_{iB_P^{\mathbb R}(\mathbb R)}=d_{w} \lambda$
at every point $w$ of the domain $iB_p^{\mathbb R}(\mathbb R)^\circ$ of $\lambda$.
\end{claim}

\begin{proof}
Take any $iv \in iB_P^{\mathbb R}(\mathbb R)$, and set
$d_{w} \lambda(iv)=u$ and $d_{w} \tilde \lambda(iv)=\tilde u$. Then,
$$
\lim_{t \to 0} \left \Vert \frac{\lambda(w+tiv)-\lambda(w)}{t}-u \right \Vert_{B_p} =0; \quad
\lim_{t \to 0} \left \Vert \frac{\tilde \lambda(w+tiv)-\tilde \lambda(w)}{t}-\tilde u \right \Vert_* =0.
$$
Since $w+tiv \in iB_p^{\mathbb R}(\mathbb R)^\circ$ for all $t \in \mathbb R$ sufficiently close to $0$,
we have $\tilde \lambda(w+tiv)=\lambda(w+tiv)$ as well as $\tilde \lambda(w)=\lambda(w)$.
Combined with the estimate of the norms $\Vert \cdot \Vert_* \leq \Vert \cdot \Vert_{B_p}$ by
Proposition \ref{VMOBp},
these two limits imply $u=\tilde u$. Hence, $d_{w} \tilde \lambda(iv)=d_{w} \lambda(iv)$
for every $iv \in iB_P^{\mathbb R}(\mathbb R)$, that is, $d_{w} \tilde \lambda|_{iB_P^{\mathbb R}(\mathbb R)}=d_{w} \lambda$.
\end{proof}

Property (2) as above implies that $d_{\tilde w}\tilde \lambda $ is injective at every $\tilde w$. 
Then, $d_{w} \lambda$ is also injective at every $w \in iB_p^{\mathbb R}(\mathbb R)^\circ$ by Claim \ref{restriction}.
Next, we will show that $d_{w}\lambda$ is surjective. After this, we see from the open mapping theorem that
$d_{w}\lambda:iB_p^{\mathbb R}(\mathbb R) \to B_p^{\mathbb R}(\mathbb R)$ is a bounded linear isomorphism.
Under this condition, the inverse mapping theorem implies that $\lambda^{-1}$ is real-analytic
in some neighborhood of any point in the image $\lambda(iB_P^{\mathbb R}(\mathbb R)^\circ)$, and thus $\lambda^{-1}$ is
globally real-analytic on $\lambda(iB_P^{\mathbb R}(\mathbb R)^\circ)$ which is an open subset of $B_P^{\mathbb R}(\mathbb R)$.

The remaining task is to show that $d_w\lambda$ is surjective at every $w \in iB_p^{\mathbb R}(\mathbb R)^\circ$. 
We take any
tangent vector $u \in B_p^{\mathbb R}(\mathbb R)$ at $\lambda(w)$, and consider a segment 
$\{\lambda(w)+tu\} \subset B_p^{\mathbb R}(\mathbb R)$ with $t$ in a sufficiently small interval $[0,\varepsilon]$.
Since $\lambda(w) \in \tilde \lambda(i{\rm BMO}_{\mathbb R}(\mathbb R)^\circ)$ and 
$\tilde \lambda(i{\rm BMO}_{\mathbb R}(\mathbb R)^\circ)$ is open, we may assume that 
$\{\lambda(w)+tu\} \subset \tilde \lambda(i{\rm BMO}_{\mathbb R}(\mathbb R)^\circ)$.
Then, the inverse image $\tilde \lambda^{-1}\{\lambda(w)+tu\}$ of the segment is
a real-analytic curve $\beta(t)$ in $i{\rm BMO}_{\mathbb R}(\mathbb R)^\circ$ starting at $w=\beta(0)$.
The tangent vector $iv=\frac{d}{dt}\beta(t)|_{t=0}$ of $\beta(t)$ at $t=0$ satisfies
$d_{w}\tilde \lambda(iv)=u$.
On the other hand, since $\{\lambda(w)+t\xi\}$ is contained in $B_p^{\mathbb R}(\mathbb R)$, Claim \ref{inverse}
implies that $\beta(t)$ is contained in $iB_p^{\mathbb R}(\mathbb R)^\circ$. Hence, $iv \in iB_p^{\mathbb R}(\mathbb R)$. 
Then by Claim \ref{restriction},
we have $d_{w}\lambda(iv)=u$.
This shows that $d_{w} \lambda$ is surjective. 
\end{proof}

In this Weil--Petersson curve version of the Coifman--Meyer theorem, we can also ask a question about
the characterization of the domains 
$iB_p^{\mathbb R}(\mathbb R)^\circ$ and $\lambda(iB_p^{\mathbb R}(\mathbb R)^\circ)$ which are contractible and
real-analytically equivalent to each other. The contractibility will be seen by Proposition \ref{continuity}.

\section{The topological group structure and its applications}
For further investigation, we show and use the following fact.

\begin{theorem}\label{topgroup}
The $p$-Weil--Petersson Teichm\"uller space $T_p$ for $p \geq 1$ is a topological group
under the operation $\ast$.
\end{theorem}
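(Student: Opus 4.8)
The plan is to establish that the group operation $\ast$ and the inversion $[\mu]\mapsto[\mu]^{-1}$ are both continuous on $T_p$ with respect to its complex Banach manifold topology. The underlying group structure itself is inherited from the universal Teichm\"uller space: composition of quasisymmetric homeomorphisms is well-defined on equivalence classes, and the only thing to verify is that $T_p$ is closed under it, i.e.\ that if $F^\mu$ and $F^\nu$ are $p$-Weil--Petersson class homeomorphisms then so is $F^\mu\circ F^\nu$, and likewise that $(F^\mu)^{-1}$ is $p$-Weil--Petersson class. The closure under composition follows from the transformation formula for complex dilatations together with the fact, already used in the proof of Theorem~\ref{holo} and recorded via \cite[Proposition~5.3]{WM-1}, that the push-forward operator $F_*$ preserves ${\mathcal M}_p$; the closure under inversion is the more delicate point, but it can be reduced to the same push-forward statement applied to $(F^\mu)^{-1}$, replacing $F^\mu|_{\mathbb L}$ by a bi-Lipschitz diffeomorphism of the hyperbolic metric exactly as in the proof of Theorem~\ref{curve} so that the explicit dilatation formula stays inside ${\mathcal M}_p(\mathbb L)$.

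First I would set up coordinates: fix $[\nu]\in T_p$ and consider the right translation $R_{[\nu]}:[\mu]\mapsto[\mu]\ast[\nu]$. As noted right after the definition of $\widetilde R_{[\nu]}$ in Section~4, $R_{[\nu]}$ is already known to be a biholomorphic automorphism of $T_p$ (the argument in the proof of Theorem~\ref{holo}, using local boundedness of $F_*$ and \cite[Proposition~3.1]{WM-2}); in particular each $R_{[\nu]}$ is a homeomorphism. So joint continuity of $\ast$ reduces to continuity at the identity: it suffices to show that if $[\mu_n]\to[0]$ and $[\nu_n]\to[0]$ in $T_p$ then $[\mu_n]\ast[\nu_n]\to[0]$, because then for $[\mu_n]\to[\mu]$ and $[\nu_n]\to[\nu]$ we write $[\mu_n]\ast[\nu_n]=R_{[\nu]}\bigl(([\mu_n]\ast[\nu_n]\ast[\nu]^{-1})\bigr)$ and reduce to controlling $[\mu_n]\ast([\nu_n]\ast[\nu]^{-1})$ with the second factor tending to $[0]$, then iterate the reduction on the first factor using left translations (whose continuity follows by symmetry once inversion is handled, or directly). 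Concretely, near the identity one can use the section of the Teichm\"uller projection: lift $[\mu_n]$, $[\nu_n]$ to Beltrami coefficients $\mu_n,\nu_n$ with $\Vert\mu_n\Vert_\infty+\Vert\mu_n\Vert_p\to 0$ and similarly for $\nu_n$, form the composition $F^{\mu_n}\circ F^{\nu_n}$, and estimate the $\Vert\cdot\Vert_p$-norm of its complex dilatation using the chain-rule formula for dilatations together with the fact that $(F^{\nu_n})_*$ has operator-type bounds on ${\mathcal M}_p$ that are locally uniform in $\nu_n$ near $0$; this is the quantitative form of the push-forward estimate in \cite[Proposition~5.3]{WM-1}.

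For inversion, I would argue analogously: continuity of $[\mu]\mapsto[\mu]^{-1}$ at the identity, combined with the already-established continuity of right translations, gives continuity everywhere, since $[\mu]^{-1}=R_{[\mu_0]^{-1}}\bigl([\mu_0]^{-1}\ast[\mu]^{-1}\bigr)$ and $[\mu_0]^{-1}\ast[\mu]^{-1}=([\mu]\ast[\mu_0])^{-1}$ reduces to inverting something near the identity. At the identity, if $\Vert\mu_n\Vert_\infty+\Vert\mu_n\Vert_p\to 0$, then the complex dilatation of $(F^{\mu_n})^{-1}$ is $-\mu_n\bigl((F^{\mu_n})^{-1}\bigr)\cdot(\overline{\partial F^{\mu_n}}/\partial F^{\mu_n})\circ(F^{\mu_n})^{-1}$ up to the usual factor, and since $F^{\mu_n}\to\mathrm{id}$ uniformly on compacta with controlled distortion, a change of variables in the integral defining $\Vert\cdot\Vert_p$ — using that $F^{\mu_n}$ is bi-Lipschitz for the hyperbolic metric with constants tending to $1$ — transfers the smallness of $\Vert\mu_n\Vert_p$ to smallness of the dilatation of $(F^{\mu_n})^{-1}$. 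I expect the main obstacle to be precisely this Jacobian/hyperbolic-bi-Lipschitz bookkeeping in the change of variables: one must be careful that the distortion of $F^{\mu_n}$ relative to the hyperbolic metric is controlled uniformly (not just the sup-norm of the dilatation), which is where the bi-Lipschitz replacement of the extension, already invoked in Theorems~\ref{curve} and~\ref{holo} via \cite[Lemma~5.1]{WM-1} and \cite[Theorem~6]{Cu}, \cite[Theorem~2.4]{Ya}, does the essential work. Once these two continuity statements are in hand, $T_p$ is a topological group, and as indicated in the text this immediately yields that $\Phi$ is a continuous surjection and that $J$ is a homeomorphism.
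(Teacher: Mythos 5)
Your estimates near the base point are essentially the paper's Lemma \ref{nearid}: lift to Beltrami coefficients, replace one extension by a hyperbolically bi-Lipschitz one via \cite[Lemma 5.1]{WM-1}, and combine the chain rule for complex dilatations with a change of variables whose Jacobian is controlled by the bi-Lipschitz and maximal dilatation constants. That part is sound, as is the closure of $T_p$ under $\ast$ and inversion. The gap is in the globalization. You reduce joint continuity of $\ast$ and global continuity of inversion to continuity at $[0]$ by conjugating with right translations, but every such reduction leaves a \emph{left} translation to perform at the end: for instance, writing $[\mu_n]=[\sigma_n]\ast[\mu_0]$ with $[\sigma_n]\to[0]$ gives $[\mu_n]^{-1}=[\mu_0]^{-1}\ast[\sigma_n]^{-1}$, and you still need $[\rho]\mapsto[\mu_0]^{-1}\ast[\rho]$ to be continuous at $[\rho]=[0]$. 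You assert that left translations are continuous ``by symmetry once inversion is handled, or directly,'' but this is circular (global continuity of inversion is what was being proved) and the symmetry claim is false as a formal principle: in the universal Teichm\"uller space $T=T_\infty$ one has precisely the ingredients you establish --- continuity of $\ast$ and of inversion at the base point, and biholomorphic right translations --- and yet $T$ is \emph{not} a topological group, exactly because left translations fail to be continuous. So the purely formal part of your reduction cannot suffice; it must be using an unproved analytic fact.

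That missing fact is the continuity at $[\nu]=[0]$ of the adjoint map $[\nu]\mapsto[\mu]\ast[\nu]\ast[\mu]^{-1}$ for each fixed $[\mu]$ (equivalently, continuity of left translations at the identity). The paper isolates exactly this step via the criterion of Gardiner--Sullivan \cite[Lemma 1.1]{GS}: a partial topological group with continuous right translations is a topological group if and only if the adjoint maps are continuous at the identity. Proving the adjoint continuity is where $p$-integrability is genuinely used (as in \cite[Chap.~1, Lemma 3.5]{TT}): the dilatation of the conjugated map involves terms of the form $\mu\circ F^\nu-\mu$, and one must show that their contribution to $\Vert\cdot\Vert_p$ tends to $0$ as $[\nu]\to[0]$, which holds for the $L^p$-type norm (approximate $\mu$ by continuous compactly supported functions) but fails for the sup norm --- this is the very reason $T_\infty$ is not a topological group. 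Your proposal never addresses this step, so as written it does not prove the theorem.
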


For $p=2$, this was proved in \cite[Chap.1, Theorem 3.8]{TT}. 
A similar argument to this case using the quasiconformal extension
and estimating the integral of the complex dilatation
also works for any $p \geq 1$. For Theorem \ref{topgroup}, 
we show the following basic result by using bi-Lipschitz quasiconformal extension.

\begin{lemma}\label{nearid}
If $[\mu]$ and $[\nu]$ converge to $[\rm 0]$ in $T_p$ for $p \geq 1$, 
then $[\mu] \ast [\nu] \to [\rm 0]$ and $[\nu]^{-1} \to [\rm 0]$.
\end{lemma}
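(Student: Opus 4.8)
The plan is to produce, for every $k$, a representative of $[\mu_k]\ast[\nu_k]$ and a representative of $[\nu_k]^{-1}$ in $\mathcal{M}_p(\mathbb{U})$ whose norms $\Vert\cdot\Vert_\infty+\Vert\cdot\Vert_p$ tend to $0$; since the Teichm\"uller projection $\pi:\mathcal{M}_p(\mathbb{U})\to T_p(\mathbb{U})$ is continuous, this yields the assertion. Conversely, since $\pi$ admits a local holomorphic right inverse near $[0]$, the hypotheses $[\mu_k]\to[0]$ and $[\nu_k]\to[0]$ let us fix representatives $\mu_k$, $\nu_k$ with $\Vert\mu_k\Vert_\infty+\Vert\mu_k\Vert_p\to0$ and $\Vert\nu_k\Vert_\infty+\Vert\nu_k\Vert_p\to0$. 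Let $F^{\mu_k}$ and $F^{\nu_k}$ be the corresponding normalized quasiconformal self-maps of $\mathbb{U}$.

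First I would replace $F^{\nu_k}$ by a bi-Lipschitz diffeomorphism. By the bi-Lipschitz extension already used above (see \cite[Lemma 5.1]{WM-1}, and \cite[Theorem 6]{Cu}, \cite[Theorem 2.4]{Ya}), there is a quasiconformal self-map $\tilde F_k$ of $\mathbb{U}$ with the same boundary values as $F^{\nu_k}$, which is bi-Lipschitz with respect to the hyperbolic metric with constant $C_k$, and whose complex dilatation $\tilde\nu_k$ lies in $\mathcal{M}_p(\mathbb{U})$. The quantitative input, which I expect to be the main obstacle, is that the construction can be arranged so that $C_k$ stays bounded and $\Vert\tilde\nu_k\Vert_\infty+\Vert\tilde\nu_k\Vert_p\to0$: the bound on $C_k$ comes from the uniform bound on $\Vert\nu_k\Vert_\infty$ (so that the $F^{\nu_k}$ are uniformly quasiconformal), and the decay of $\Vert\tilde\nu_k\Vert_p$ from the estimate of the dilatation of the extension in terms of $\nu_k$. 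Because $\tilde F_k$ and $F^{\nu_k}$ agree on $\mathbb{R}$, the maps $F^{\mu_k}\circ\tilde F_k$ and $(\tilde F_k)^{-1}$ have the same boundary homeomorphisms as $F^{\mu_k}\circ F^{\nu_k}$ and $(F^{\nu_k})^{-1}$, so that $[\mu_k]\ast[\nu_k]=[\mu_{F^{\mu_k}\circ\tilde F_k}]$ and $[\nu_k]^{-1}=[\mu_{(\tilde F_k)^{-1}}]$.

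Next I would record the change-of-variables estimates coming from the bi-Lipschitz condition with constant $C_k$: comparing the extreme singular values of the real derivative of $\tilde F_k$ with the hyperbolic densities at $z$ and at $\tilde F_k(z)$ gives, for the Jacobian $J_{\tilde F_k}$,
$$
J_{\tilde F_k}(z)\le C_k^2\,\frac{({\rm Im}\,\tilde F_k(z))^2}{({\rm Im}\,z)^2}\qquad\text{and}\qquad\frac{({\rm Im}\,\tilde F_k(z))^2}{({\rm Im}\,z)^2}\le C_k^2\,J_{\tilde F_k}(z).
$$
For the inverse, $|\mu_{(\tilde F_k)^{-1}}(\tilde F_k(z))|=|\tilde\nu_k(z)|$, so the substitution $w=\tilde F_k(z)$ and the first inequality yield
$$
\Vert\mu_{(\tilde F_k)^{-1}}\Vert_p^p=\int_{\mathbb{U}}\frac{|\tilde\nu_k(z)|^p}{({\rm Im}\,\tilde F_k(z))^2}\,J_{\tilde F_k}(z)\,dx\,dy\le C_k^2\,\Vert\tilde\nu_k\Vert_p^p\longrightarrow 0,
$$
together with $\Vert\mu_{(\tilde F_k)^{-1}}\Vert_\infty=\Vert\tilde\nu_k\Vert_\infty\to0$; hence $[\nu_k]^{-1}\to[0]$.

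For the composition, the chain rule for complex dilatations gives the pointwise bound $|\mu_{F^{\mu_k}\circ\tilde F_k}(z)|\le|\tilde\nu_k(z)|+|\mu_k(\tilde F_k(z))|$, whence $|\mu_{F^{\mu_k}\circ\tilde F_k}(z)|^p\le 2^{p-1}\bigl(|\tilde\nu_k(z)|^p+|\mu_k(\tilde F_k(z))|^p\bigr)$. Integrating against $({\rm Im}\,z)^{-2}$, the first term contributes $2^{p-1}\Vert\tilde\nu_k\Vert_p^p$, while for the second the substitution $w=\tilde F_k(z)$ together with the second inequality above gives $\int_{\mathbb{U}}|\mu_k(\tilde F_k(z))|^p({\rm Im}\,z)^{-2}\,dx\,dy\le C_k^2\,\Vert\mu_k\Vert_p^p$. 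Both tend to $0$, and $\Vert\mu_{F^{\mu_k}\circ\tilde F_k}\Vert_\infty\le\Vert\tilde\nu_k\Vert_\infty+\Vert\mu_k\Vert_\infty\to0$; hence $[\mu_k]\ast[\nu_k]\to[0]$. Apart from extracting the uniform bi-Lipschitz constant and the norm decay $\Vert\tilde\nu_k\Vert_p\to0$ from the extension construction of \cite{WM-1}, everything here is a routine change of variables.
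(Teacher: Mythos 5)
Your proposal is correct and follows essentially the same route as the paper: both arguments hinge on replacing the representative of $[\nu]$ by a hyperbolically bi-Lipschitz quasiconformal extension (from \cite[Lemma 5.1]{WM-1}) with uniformly bounded bi-Lipschitz constant and $p$-norm tending to $0$, and then estimate the dilatations of the composition and the inverse by the chain rule together with the change of variables controlled by the Jacobian bounds $C^{-2}\bigl(\operatorname{Im}F(z)/\operatorname{Im}z\bigr)^{2}\le J_F(z)\le C^{2}\bigl(\operatorname{Im}F(z)/\operatorname{Im}z\bigr)^{2}$. The quantitative input you single out as the potential obstacle is exactly what the paper extracts from that lemma, so there is no gap.
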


\begin{proof}
Let $F:\mathbb U \to \mathbb U$ be a quasiconformal homeomorphism with its complex dilatation 
$\mu \in \mathcal M_p(\mathbb U)$ in the equivalence class $[\mu]$. We may choose $\mu$ so that
$\Vert \mu \Vert_p \to 0$ and $\Vert \mu \Vert_\infty \to 0$ as $[\mu] \to [\rm 0]$.
Let $H:\mathbb U \to \mathbb U$
be a bi-Lipschitz diffeomorphism 
with its complex dilatation 
$\nu \in \mathcal M_p(\mathbb U)$ in the equivalence class $[\nu]$. 
The existence of such an extension is guaranteed by \cite[Lemma 3.4]{WM-1}.
This also implies that
$\Vert \nu \Vert_p \to 0$ and $\Vert \nu \Vert_\infty \to 0$ as $[\nu] \to [\rm 0]$
and that the bi-Lipschitz constant $L \geq 1$ of $H$ is uniformly bounded throughout when $[\nu]$ tends to $[\rm 0]$.
We use the chain rule for the complex dilatation:
$$
\mu \ast \nu(z)=\frac{(\mu \circ H(z))\cdot \frac{\overline{H_z}}{H_z}+\nu(z)}{1+\mu \circ H(z) 
\cdot {\bar \nu}(z)\frac{\overline{H_z}}{H_z}}; \quad
\nu^{-1}(H(z))=-\nu(z)\frac{\overline{H_z}}{H_z}.
$$

For the composition, we estimate the integral as
\begin{equation*}
\begin{split}
\int_{\mathbb U}|\mu \ast \nu(z)|^p\frac{dxdy}{y^2}\leq
\frac{2^{p-1}}{(1-\Vert \mu \Vert_\infty \Vert \nu \Vert_\infty)^p} 
\left(\int_{\mathbb U}|\mu \circ H(z)|^p\frac{dxdy}{y^2}+\int_{\mathbb U}|\nu(z)|^p\frac{dxdy}{y^2}\right).
\end{split}
\end{equation*}
Since $H^{-1}$ is Lipschitz with the constant $L$ in the hyperbolic metric, we have
\begin{equation*}
\begin{split}
\int_{\mathbb U}|\mu \circ H(z)|^p\frac{dxdy}{y^2}=\int_{\mathbb U}|\mu(\zeta)|^p\frac{d \xi d \eta}{J_H(z)y^2}
\leq KL^2 \int_{\mathbb U} |\mu(\zeta)|^p\frac{d \xi d \eta}{\eta^2}
\end{split}
\end{equation*}
for the Jacobian determinant $J_H$ of $H$ and the maximal dilatation $K \geq 1$ of $H$. Thus,
$$
\Vert \mu \ast \nu \Vert_p \leq 2^{\frac{p-1}{p}}\frac{(KL^2\Vert \mu \Vert^p_p+\Vert \nu \Vert^p_p)^{1/p}}{1-\Vert \mu \Vert_\infty \Vert \nu \Vert_\infty}; \quad \Vert \mu \ast \nu \Vert_\infty \leq \frac{\Vert \mu \Vert_\infty +\Vert \nu \Vert_\infty}{1-\Vert \mu \Vert_\infty \Vert \nu \Vert_\infty}.
$$
This implies that $[\mu] \ast [\nu] \to [\rm 0]$ as $[\mu] \to [\rm 0]$ and $[\nu] \to [\rm 0]$.

For the inverse operation, we obtain similarly 
$$
\int_{\mathbb U}|\nu^{-1}(z)|^p\frac{dxdy}{y^2}\leq \int_{\mathbb U}|\nu \circ H^{-1}(z)|^p\frac{dxdy}{y^2}
\leq KL^2 \int_{\mathbb U} |\nu(\zeta)|^p\frac{d \xi d \eta}{\eta^2}
$$
since $H$ is Lipschitz with the constant $L$. Thus,
$$
\Vert \nu^{-1} \Vert_p \leq (KL^2)^{1/p} \Vert \nu \Vert_p; \quad \Vert \nu^{-1} \Vert_\infty=\Vert \nu \Vert_\infty.
$$
This implies that $[\nu]^{-1} \to [\rm 0]$ as $[\nu] \to [\rm 0]$.
\end{proof}

\begin{proof}[Proof of Theorem \ref{topgroup}]
Lemma \ref{nearid} implies that $T_p$ is a partial topological group. 
We have already seen that that the right translation 
$R_{[\nu]}$ for any $[\nu] \in T_p$ is continuous (in fact, biholomorphic) at the beginning of Section 4.
Hence, in order to show that $T_p$ is a topological group according to \cite[Lemma 1.1]{GS}, 
we have only to prove that the adjoint map $T_p \to T_p$ 
defined by $[\mu] \ast[\nu] \ast [\mu]^{-1}$ for any fixed $[\mu] \in T_p$
is continuous as $[\nu] \to [0]$.
The arguments are essentially the same as in \cite[Chapter 1, Lemma 3.5]{TT}. We omit the details here. 
\end{proof}

There are several consequences from Theorem \ref{topgroup}.
We first introduce the curve theoretical product structure for the space ${\rm WPC}_p$ of normalized $p$-Weil--Petersson curves, 
and verify the topological equivalence of those real and complex Banach manifold structures on 
${\rm WPC}_p$ and its subspaces for $p>1$.
Then, the continuity of Riemann mappings defined by
$p$-Weil--Petersson curves, and
the boundedness of the image of a bounded set under the biholomorphic homeomorphism $L$ on 
${\rm WPC}_p$ are proved.

Theorem \ref{topgroup} in particular implies that $R_{[\nu]}([\mu])=[\mu] \ast [\nu]$ 
and $(R_{[\nu]})^{-1}([\mu])=[\mu] \ast [\nu]^{-1}$ are 
continuous with respect to two variables $([\mu],[\nu]) \in T_p \times T_p$. 
Since $\widetilde R_{[\nu]}$ and $Q_{L([\nu],[\nu])}$ correspond as in Proposition \ref{Lequation},
this fact yields the following consequence on the bijection $J$ given in Lemma \ref{arclength}.

\begin{theorem}\label{Jhomeo}
The bijection $J:B_p^{\mathbb R}(\mathbb R) \times iB_p^{\mathbb R}(\mathbb R)^{\circ} \to
L({\rm WPC}_p)$ is a homeomorphism for $p>1$.
\end{theorem}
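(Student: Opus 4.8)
The plan is to show that both $J$ and $J^{-1}$ are continuous by expressing them in terms of operations that are already known to be continuous, principally the group operations on $T_p$ and the correspondence established in Proposition \ref{Lequation}. Recall that $J(u,iv)=Q_u(iv)=u+iP_{\gamma_u}(v)$, and that under the identification $W_p \cong T_p$ via $u \mapsto [\nu_u]$ (where $\gamma_{u}=\gamma_{L([\nu_u],[\nu_u])}$) together with the biholomorphism $L$ of Theorem \ref{biholo}, the affine map $Q_u$ corresponds to the right translation $\widetilde R_{[\nu_u]}$ on ${\rm WPC}_p$. Since $L$ and $L^{-1}$ are homeomorphisms, continuity of $J$ is equivalent to continuity of the map $(u,iv) \mapsto \widetilde R_{[\nu_u]}(L^{-1}(iv))$ from $B_p^{\mathbb R}(\mathbb R) \times iB_p^{\mathbb R}(\mathbb R)^{\circ}$ into ${\rm WPC}_p$.

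First I would reduce to the Bers coordinates: write $L^{-1}(iv)=([0],[\mu_v])$ for the Riemann mapping parametrization with $\log$-derivative $iv$ (this is $\Phi$-type data), and recall from Corollary \ref{real-analytic}(2) that $v \mapsto L^{-1}(iv)$ is a homeomorphism onto $IW_p \subset {\rm WPC}_p$, so in particular $v \mapsto [\mu_v] \in T_p(\mathbb L)$ is continuous. Then $\widetilde R_{[\nu_u]}(L^{-1}(iv))=([\nu_u],[\mu_v]\ast[\nu_u])$ in Bers coordinates. The first component depends continuously on $u$ because $u \mapsto [\nu_u]$ is continuous (this is $L|_{W_p}^{-1}$ followed by the identification, both continuous by Corollary \ref{real-analytic}(1)). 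The second component depends continuously on the pair $([\mu_v],[\nu_u])$ by Theorem \ref{topgroup}: the group operation $\ast:T_p \times T_p \to T_p$ is jointly continuous. Composing with the homeomorphism $\iota:T_p(\mathbb U)\times T_p(\mathbb L)\to {\rm WPC}_p$ of Proposition \ref{id} and then with $L$ gives continuity of $J$.

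For the inverse, given $w=u+iv' \in L({\rm WPC}_p)$ I would recover $(u,iv)=J^{-1}(w)$ as follows. The real part $u={\rm Re}\,w$ depends continuously (indeed linearly and boundedly) on $w$, since ${\rm Re}$ is a bounded projection on $B_p(\mathbb R)$ as noted after the definition of $B_p(\mathbb R)$. Then $iv=Q_u^{-1}(w)$, and in the group picture $Q_u^{-1}$ corresponds to $\widetilde R_{[\nu_u]^{-1}}$, equivalently to $\widetilde R_{[\nu_u]}^{-1}$; so $L^{-1}(iv)=\widetilde R_{[\nu_u]}^{-1}(L^{-1}(w))$. In Bers coordinates, if $L^{-1}(w)=([\mu_1],[\mu_2])$ then $L^{-1}(iv)=([\mu_1]\ast[\nu_u]^{-1},[\mu_2]\ast[\nu_u]^{-1})$, and by Theorem \ref{topgroup} the maps $([\mu],[\nu]) \mapsto [\mu]\ast[\nu]^{-1}$ are jointly continuous; since $w \mapsto ([\mu_1],[\mu_2])$ is continuous ($L^{-1}$ is a homeomorphism) and $w \mapsto [\nu_u]=[\nu_{{\rm Re}\,w}]$ is continuous, we get that $w \mapsto L^{-1}(iv) \in IW_p$ is continuous, hence $w \mapsto iv$ is continuous by Corollary \ref{real-analytic}(2). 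This gives continuity of $J^{-1}$ and completes the proof.

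The main obstacle is the joint continuity of $\ast$ and of the inversion on $T_p$, i.e.\ the topological group property—but this is exactly what Theorem \ref{topgroup} supplies, so within the logical structure of the paper the remaining work is bookkeeping: carefully tracking how $Q_u$ and $Q_u^{-1}$ translate into $\widetilde R_{[\nu_u]}$ and $\widetilde R_{[\nu_u]}^{-1}$ through $L$ (this is Proposition \ref{Lequation}), and confirming that the parameter maps $u \mapsto [\nu_u]$ and $iv \mapsto [\mu_v]$ are continuous (Corollary \ref{real-analytic}). One point to handle with slight care is that $iB_p^{\mathbb R}(\mathbb R)^{\circ}$ and $L({\rm WPC}_p)$ carry the subspace topology from $B_p(\mathbb R)$, so all continuity statements should be phrased for these subspace topologies; this is automatic once $L$ is known to be a homeomorphism onto its image by Theorem \ref{biholo}.
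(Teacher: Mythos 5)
Your proposal follows essentially the same route as the paper: conjugate $Q_u$ and $Q_u^{-1}$ by the homeomorphism $L$ into the right translations $\widetilde R_{[\nu_u]}$ and $\widetilde R_{[\nu_u]}^{-1}$ via Proposition \ref{Lequation}, invoke the topological group property of Theorem \ref{topgroup} for joint continuity, and recover $J^{-1}$ from $u={\rm Re}\,w$ together with $Q_u^{-1}$. One slip should be corrected: $L^{-1}(iv)$ for $iv\in iB_p^{\mathbb R}(\mathbb R)^\circ$ is the \emph{arc-length} parametrization, an element of $IW_p$, not a Riemann mapping parametrization, so its Bers coordinates are a general pair $([\mu_1(v)],[\mu_2(v)])$ rather than $([0],[\mu_v])$ --- indeed the whole point of Theorem \ref{CM} is that $IW_p$ and $RM_p$ differ. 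Consequently $\widetilde R_{[\nu_u]}(L^{-1}(iv))=([\mu_1(v)]\ast[\nu_u],[\mu_2(v)]\ast[\nu_u])$, not $([\nu_u],[\mu_v]\ast[\nu_u])$; the continuity argument is unaffected, since both coordinates of $v\mapsto L^{-1}(iv)$ are continuous and $\ast$ is jointly continuous, but the identification as written is false.
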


\begin{proof}
We prove that $Q_u(w)$ and $Q_u^{-1}(w)$ are continuous for $(u,w) \in B_p^{\mathbb R}(\mathbb R) \times L({\rm WPC}_p)$. 
By Proposition \ref{Lequation} with $L([\nu],[\nu])=u$ for some $[\nu] \in T_p$,
we obtain that
$$
Q_{u}(w)=L \circ \widetilde R_{[\nu]} \circ L^{-1}(w).
$$
Then, Theorems \ref{biholo} and \ref{topgroup} imply that this is continuous. Similarly, $Q_u^{-1}(w)$
is also continuous. Since 
$J(u,iv)=Q_{u}(iv)$ and $J^{-1}(u+iv)=(u,Q_u^{-1}(u+iv))$, both $J$ and $J^{-1}$ are continuous.
\end{proof}

\begin{remark}
The continuity of $Q_u(w)$ also implies that the variable change operator 
$P:B_p(\mathbb R) \times W_p \to B_p(\mathbb R)$ defined by $P_h(w)=w \circ h$ is continuous
for both $w \in B_p(\mathbb R)$ and $h \in W_p$. Indeed, $P_h(w)=Q_{\log h'}(w)-\log h'$.
For $p=2$, the corresponding claim to this continuity is in \cite[Proposition 7.1]{SW} for instance.
\end{remark}

\begin{corollary}\label{structures}
The space ${\rm WPC}_p$ of all normalized $p$-Weil--Petersson embeddings is endowed with the real Banach manifold structure
modeled on $B_p^{\mathbb R}(\mathbb R) \times iB_p^{\mathbb R}(\mathbb R)^{\circ}$ by $J^{-1} \circ L$, which is
topologically equivalent to its complex Banach manifold structure $T_p(\mathbb U) \times T_p(\mathbb L)$.
\end{corollary}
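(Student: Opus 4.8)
\emph{Plan.} The statement is a formal assembly of homeomorphisms that have already been established, so the plan is to compose them in the right order and then read off the two assertions. First I would recall that, by Proposition~\ref{id} together with the pre-Schwarzian derivative models of Section~3, the set ${\rm WPC}_p$ carries the complex Banach manifold structure of $T_p(\mathbb U) \times T_p(\mathbb L)$, presented as a domain of ${\mathcal B}_p(\mathbb L) \times {\mathcal B}_p(\mathbb U)$. By Theorem~\ref{biholo}, the map $L$ is a biholomorphic homeomorphism of ${\rm WPC}_p$ onto the open domain $L({\rm WPC}_p) \subset B_p(\mathbb R)$; in particular $L$ is a homeomorphism with respect to the topology of ${\rm WPC}_p$ underlying $T_p(\mathbb U) \times T_p(\mathbb L)$, exhibiting that complex structure as a single global chart onto an open subset of the complex Banach space $B_p(\mathbb R)$.

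Next I would note that $iB_p^{\mathbb R}(\mathbb R)^{\circ}=iB_p^{\mathbb R}(\mathbb R) \cap L({\rm WPC}_p)$ is open in $iB_p^{\mathbb R}(\mathbb R)$, since $L({\rm WPC}_p)$ is open in $B_p(\mathbb R)$; hence $B_p^{\mathbb R}(\mathbb R) \times iB_p^{\mathbb R}(\mathbb R)^{\circ}$ is an open subset of the real Banach space $B_p^{\mathbb R}(\mathbb R) \times iB_p^{\mathbb R}(\mathbb R)$. By Theorem~\ref{Jhomeo} the bijection $J:B_p^{\mathbb R}(\mathbb R) \times iB_p^{\mathbb R}(\mathbb R)^{\circ} \to L({\rm WPC}_p)$ is a homeomorphism, so the composition
$$
J^{-1} \circ L:{\rm WPC}_p \longrightarrow B_p^{\mathbb R}(\mathbb R) \times iB_p^{\mathbb R}(\mathbb R)^{\circ}
$$
is a homeomorphism onto an open subset of $B_p^{\mathbb R}(\mathbb R) \times iB_p^{\mathbb R}(\mathbb R)$. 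This single chart equips ${\rm WPC}_p$ with a real Banach manifold structure modeled on $B_p^{\mathbb R}(\mathbb R) \times iB_p^{\mathbb R}(\mathbb R)$, which is the first assertion. For the second assertion, observe that $J^{-1} \circ L$ is a homeomorphism precisely with respect to the topology of ${\rm WPC}_p \cong T_p(\mathbb U) \times T_p(\mathbb L)$; therefore the real structure defined by it and the complex structure of the Bers coordinates induce the same topology on ${\rm WPC}_p$, which is exactly the claimed topological equivalence.

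There is no genuine obstacle internal to this proof: the substantive work lies upstream, in Theorem~\ref{biholo} (and hence Theorem~\ref{holo} and Lemma~\ref{arclength}) and in Theorem~\ref{Jhomeo}, the latter resting on the topological group property of $T_p$ (Theorem~\ref{topgroup}). The only point worth stressing in the write-up is a negative one: one should not expect $J^{-1}\circ L$ to be bi-real-analytic, because $J(u,iv)=u+iP_{\gamma_u}(v)$ involves the variable change operator $P_{\gamma_u}$, whose dependence on the parameter $u$ (equivalently on $\gamma_u \in W_p$) is only known to be continuous, not real-analytic. Thus the real manifold structure of Corollary~\ref{structures} is genuinely distinct from, yet topologically indistinguishable from, the complex structure of $T_p(\mathbb U) \times T_p(\mathbb L)$ — which is the phenomenon this section is meant to exhibit.
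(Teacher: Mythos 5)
Your proposal is correct and follows the same route the paper intends: the corollary is stated without a separate proof precisely because it is the immediate composition of Theorem~\ref{biholo} ($L$ is a biholomorphic homeomorphism onto its open image) with Theorem~\ref{Jhomeo} ($J$ is a homeomorphism), exactly as you assemble it. Your additional observations — that $iB_p^{\mathbb R}(\mathbb R)^{\circ}$ is open in $iB_p^{\mathbb R}(\mathbb R)$ because $L({\rm WPC}_p)$ is open, and that one should not expect $J^{-1}\circ L$ to be bi-real-analytic — are accurate and consistent with the paper's framing.
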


Next, we investigate the continuity of Riemann mappings defined by $p$-Weil--Peters\-son curves
with respect to the topological structure of ${\rm WPC}_p$.
As we have seen in Corollary \ref{structures}, the complex-analytic structure $T_p(\mathbb U) \times T_p(\mathbb L)$
and the real-analytic structure $B_p^{\mathbb R}(\mathbb R) \times iB_p^{\mathbb R}(\mathbb R)^{\circ}$ on 
${\rm WPC}_p$ are topologically equivalent. Thus, we can use either of these structures in order to consider the continuity.

\begin{proposition}\label{continuity}
The map $\Phi:{\rm WPC}_p \to RM_p \cong T_p^{RM}$ is a continuous surjection for $p \geq 1$. Moreover, 
$\Phi|_{IW_p}:IW_p \to RM_p$ is a homeomorphism for $p>1$.
\end{proposition}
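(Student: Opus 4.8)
The plan is to establish the two assertions separately, using the topological group structure (Theorem \ref{topgroup}) as the key engine for the first, and the biholomorphy of $L$ together with Claim-type arguments already in place for the second. For the surjectivity of $\Phi:{\rm WPC}_p \to RM_p$, I would argue directly in the Bers coordinates: given any $([0],[\mu]) \in \{[0]\} \times T_p(\mathbb L)$ representing an element of $RM_p$, the point $([0],[\mu]) \in {\rm WPC}_p$ itself maps to it under $\Phi$ since $\Phi([0],[\mu]) = ([0],[\mu]\ast[0]^{-1}) = ([0],[\mu])$. So surjectivity is immediate. For continuity of $\Phi$, recall $\Phi([\mu_1],[\mu_2]) = ([0],[\mu_2]\ast[\mu_1]^{-1})$; by Theorem \ref{topgroup}, the operation $\ast$ and the inversion on $T_p$ are jointly continuous, so $([\mu_1],[\mu_2]) \mapsto [\mu_2]\ast[\mu_1]^{-1}$ is continuous from $T_p(\mathbb U)\times T_p(\mathbb L)$ to $T_p$, and hence $\Phi$ is continuous. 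I would phrase this using the identification $T_p(\mathbb U) \cong T_p(\mathbb L) \cong T_p$ that the paper has already set up.

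For the second statement, that $\Phi|_{IW_p}:IW_p \to RM_p$ is a homeomorphism when $p>1$, I would first note that $\Phi|_{IW_p}$ is a continuous bijection: it is continuous as a restriction of $\Phi$, and it is a bijection because by Lemma \ref{arclength} (or Proposition \ref{fibers}) each $p$-Weil--Petersson curve has a unique arc-length parametrization, so each fiber of $\Phi$ meets $IW_p$ in exactly one point. The remaining task is to produce a continuous inverse $RM_p \to IW_p$. The natural candidate is, for $h \in RM_p$, to send $h$ to its arc-length reparametrization $\gamma_0 \in IW_p$. Concretely, writing $h = \gamma_{\log h'}$ with $\log h' = u_0 + iv_0 \in B_p(\mathbb R)$ (using Lemma \ref{RM} and Theorem \ref{biholo}), the arc-length parametrization corresponds to choosing $iv \in iB_p^{\mathbb R}(\mathbb R)^\circ$ with $\log h'$ obtained from $iv$ by a change of parameter, i.e. solving $Q_u(iv) = u_0 + iv_0$ in the notation of Lemma \ref{arclength}; equivalently, in Bers coordinates, $h = ([0],[\mu])$ and $\gamma_0 = \widetilde R_{[\nu]^{-1}}([0],[\mu])$ where $[\nu] \in T_p$ corresponds via $\gamma_u \in W_p$ to $u = \Pi(\gamma_0)$... but this $u$ is not yet known, so I would instead exhibit the inverse via $J$: the inverse of $\Phi|_{IW_p}$ is $L^{-1}\circ J(0,\cdot)\circ (\text{the map }RM_p \to iB_p^{\mathbb R}(\mathbb R)^\circ)$ induced by Proposition \ref{fibers}, and each of these pieces is continuous by Theorems \ref{biholo} and \ref{Jhomeo}.

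The main obstacle I anticipate is making the inverse map explicitly continuous without circularity — one must be careful that the passage from $h \in RM_p$ to the purely imaginary $iv \in iB_p^{\mathbb R}(\mathbb R)^\circ$ giving the same image is genuinely continuous, since a priori it involves solving the nonlinear conformal-welding-type equation \eqref{injective}. The cleanest route around this is to invoke the already-established homeomorphism $J:B_p^{\mathbb R}(\mathbb R)\times iB_p^{\mathbb R}(\mathbb R)^\circ \to L({\rm WPC}_p)$ of Theorem \ref{Jhomeo}: since $RM_p = L^{-1}(J(\{0\}\times iB_p^{\mathbb R}(\mathbb R)^\circ))$ is the $\Phi$-image (each curve's Riemann parametrization is the representative with $u=0$ in the $J$-coordinates, by Proposition \ref{fibers}), the composition $RM_p \xrightarrow{L} J(\{0\}\times iB_p^{\mathbb R}(\mathbb R)^\circ) \xrightarrow{J^{-1}} \{0\}\times iB_p^{\mathbb R}(\mathbb R)^\circ \xrightarrow{J(0,\cdot)} J(\{0\}\times iB_p^{\mathbb R}(\mathbb R)^\circ) = iB_p^{\mathbb R}(\mathbb R)^\circ \xrightarrow{L^{-1}} IW_p$ is visibly continuous — wait, one must check this composition actually lands where intended and inverts $\Phi|_{IW_p}$; that verification is the bookkeeping step. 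Once that is in place, $\Phi|_{IW_p}$ is a continuous bijection with continuous inverse, hence a homeomorphism, and its contractibility (hence that of $iB_p^{\mathbb R}(\mathbb R)^\circ$ and $\lambda(iB_p^{\mathbb R}(\mathbb R)^\circ)$, as promised after Theorem \ref{CM}) follows from the contractibility of $RM_p \cong T_p$.
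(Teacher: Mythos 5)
Your treatment of the first statement and your overall strategy for the second (continuous bijection plus an inverse read off in the $J$-coordinates) coincide with the paper's proof, which likewise derives the first statement from Theorem \ref{topgroup} and obtains $(\Phi|_{IW_p})^{-1}$ as the second-factor projection in the coordinates $J^{-1}\circ L$, via Proposition \ref{fibers}. However, your ``cleanest route'' contains a concrete error: you assert that $RM_p=L^{-1}\bigl(J(\{0\}\times iB_p^{\mathbb R}(\mathbb R)^\circ)\bigr)$, i.e.\ that the Riemann mapping parametrization is the representative with $u=0$ in the $J$-coordinates. This is false. Since $\gamma_0={\rm id}$ and hence $J(0,iv)=Q_0(iv)=iv$, the set $J(\{0\}\times iB_p^{\mathbb R}(\mathbb R)^\circ)$ equals $iB_p^{\mathbb R}(\mathbb R)^\circ=L(IW_p)$: the slice $u=0$ picks out the arc-length section, not the Riemann-mapping section. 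For $h\in RM_p$ one has $\log h'=u_0+iv_0$ with $u_0={\rm Re}\log h'$ generally nonzero, so $L(RM_p)$ is a nonlinear graph over $iB_p^{\mathbb R}(\mathbb R)^\circ$ determined by the conformal welding, and your displayed composition $RM_p\xrightarrow{L} J(\{0\}\times iB_p^{\mathbb R}(\mathbb R)^\circ)$ is not even well defined; the ``bookkeeping step'' you defer would fail as stated.

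The fix is the version you sketched earlier in the same paragraph before veering off: the continuous inverse of $\Phi|_{IW_p}$ is $h\mapsto L^{-1}\bigl({\rm pr}_2(J^{-1}(L(h)))\bigr)$, where ${\rm pr}_2:B_p^{\mathbb R}(\mathbb R)\times iB_p^{\mathbb R}(\mathbb R)^\circ\to iB_p^{\mathbb R}(\mathbb R)^\circ$ is the second-factor projection. Indeed, by Proposition \ref{fibers} the fiber $\Phi^{-1}(h)$ is exactly $L^{-1}\circ J(B_p^{\mathbb R}(\mathbb R)\times\{iv\})$ for the unique $iv$ with $\Phi(L^{-1}(J(0,iv)))=h$; since $h$ lies in its own fiber, ${\rm pr}_2(J^{-1}(L(h)))=iv$, and $L^{-1}(iv)=\gamma_{iv}$ is the unique point of $IW_p$ in that fiber, i.e.\ $(\Phi|_{IW_p})^{-1}(h)$. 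Continuity of this composite follows from Theorems \ref{biholo} and \ref{Jhomeo}. With this correction your argument is exactly the paper's.
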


\begin{proof}
The first statement follows from Theorem \ref{topgroup}.
For $(\Phi|_{IW_p})^{-1}:RM_p \to IW_p$, we consider its conjugate by $J^{-1} \circ L$.
This is nothing but the projection  
$B_p^{\mathbb R}(\mathbb R) \times iB_p^{\mathbb R}(\mathbb R)^\circ \to iB_p^{\mathbb R}(\mathbb R)^\circ$
to the second factor by Proposition \ref{fibers}. Hence, $(\Phi|_{IW_p})^{-1}$ is continuous and $\Phi|_{IW_p}$ is a homeomorphism.
\end{proof}

\begin{remark}
It is proved in \cite[Theorem 2.4]{SW} that the map $iB_2^{\mathbb R}(\mathbb R)^{\circ} \to T_2^{RM}$ 
is a homeo\-morphism,
which corresponds to our $\Phi|_{IW_p}$ under $IW_2 \cong iB_2^{\mathbb R}(\mathbb R)^{\circ}$.
In our framework, $IW_2$ and $iB_2^{\mathbb R}(\mathbb R)^{\circ}$ are real-analytically equivalent under $L$,
so this claim can be rephrased as just the continuity of $\Phi$ as in Proposition \ref{continuity}.
\end{remark}

Thus, we can conclude that $IW_p$ and $RM_p$, both of which can be regarded as
the space of the images of all
$p$-Weil--Petersson embeddings, are naturally endowed with the two analytic structures 
in the following sense.

\begin{theorem}\label{IWp}
The real-analytic submanifold $IW_p$ and the complex-analytic submanifold $RM_p$ of ${\rm WPC}_p$ for $p>1$
are equipped with both the complex-analytic structure of $T_p$
and the real-analytic structure of $iB_p^{\mathbb R}(\mathbb R)^\circ$, which are topologically equivalent.
\end{theorem}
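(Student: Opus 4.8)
The plan is to assemble Theorem~\ref{IWp} directly from the results already established, so the proof is essentially a bookkeeping of previously proved equivalences rather than a new argument. First I would recall that by Corollary~\ref{real-analytic}(2) the map $L|_{IW_p}$ is a real-analytic homeomorphism from $IW_p$ onto $iB_p^{\mathbb R}(\mathbb R)^\circ$ with real-analytic inverse; this is precisely what it means for $IW_p$ to carry the real-analytic structure of $iB_p^{\mathbb R}(\mathbb R)^\circ$. Simultaneously, by Proposition~\ref{continuity} the restriction $\Phi|_{IW_p}:IW_p \to RM_p$ is a homeomorphism, and $RM_p$ is identified with $T_p^{RM} \cong T_p$ as a complex-analytic submanifold of ${\rm WPC}_p$ (this identification is the one recorded in Section~4 via the second coordinate axis $\{[0]\}\times T_p(\mathbb L)$). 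Transporting the complex-analytic structure of $T_p$ back along $\Phi|_{IW_p}$ therefore equips $IW_p$ with a complex-analytic structure as well.

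The second step is to check that these two structures on $IW_p$ are topologically equivalent, i.e.\ that the identity map on the underlying set is a homeomorphism when the domain carries one structure and the target the other. But this is immediate: both structures induce the same topology on $IW_p$, namely the subspace topology it inherits as a subset of ${\rm WPC}_p$, because $L|_{IW_p}$ and $\Phi|_{IW_p}$ are each homeomorphisms onto their respective images for that subspace topology. So no genuine comparison computation is needed; the topological equivalence is a formal consequence of having two homeomorphic models. For $RM_p$ the situation is symmetric: it is a complex-analytic submanifold of ${\rm WPC}_p$ by construction, and pulling back the real-analytic structure of $iB_p^{\mathbb R}(\mathbb R)^\circ$ along $L^{-1}|_{iB_p^{\mathbb R}(\mathbb R)^\circ}$ followed by $\Phi|_{IW_p}$ (equivalently, transporting via the homeomorphism $\Phi|_{IW_p}^{-1}$) endows $RM_p$ with the real-analytic structure as well, again with the common subspace topology.

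I would then phrase the conclusion cleanly: the diagram $iB_p^{\mathbb R}(\mathbb R)^\circ \xleftarrow{\,L|_{IW_p}\,} IW_p \xrightarrow{\,\Phi|_{IW_p}\,} RM_p \cong T_p$ exhibits $IW_p$ (and, via the same two maps, $RM_p$) as carrying both a real-analytic atlas and a complex-analytic atlas, and since all the displayed maps are homeomorphisms for the subspace topology, the two atlases are compatible as topological manifold structures. I do not expect any serious obstacle here; the only thing requiring a moment's care is making explicit that ``equipped with both structures'' is to be read via these canonical identifications (rather than claiming the two atlases are real-analytically compatible, which is false in general and is exactly the point of the surrounding discussion), so the main work is one or two sentences pinning down the statement's meaning before invoking Corollary~\ref{real-analytic} and Proposition~\ref{continuity}.
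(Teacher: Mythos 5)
Your proposal is correct and matches the paper's (implicit) argument: the paper states Theorem~\ref{IWp} as an immediate consequence of Corollary~\ref{real-analytic}(2) (giving $IW_p$ the real-analytic structure of $iB_p^{\mathbb R}(\mathbb R)^\circ$ via $L|_{IW_p}$) and Proposition~\ref{continuity} (the homeomorphism $\Phi|_{IW_p}:IW_p \to RM_p \cong T_p$ transporting the complex structure), which is exactly the bookkeeping you perform. Your closing caveat, that the two atlases are only claimed to be topologically (not real-analytically) compatible, is also the intended reading, with the genuine analytic comparison deferred to the map $\lambda$ in Theorem~\ref{CM}.
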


By $J^{-1} \circ L$, we can introduce the product structure $W_p \times IW_p$ to ${\rm WPC}_p$ from
$B_p^{\mathbb R}(\mathbb R) \times iB_p^{\mathbb R}(\mathbb R)^{\circ}$. Both the products $W_p \times RM_p$ and
$W_p \times IW_p$ are homeomorphic to ${\rm WPC}_p$ for $p>1$.
We summarize the correspondence of
the product structures 
$W_p \times RM_p$ and $W_p \times IW_p$ on ${\rm WPC}_p$  
more precisely than Proposition \ref{fibers} by adding the homeomorphic property of the sections for 
the projection $\Phi$.
The proof is similar.
The fiber structure of the projection to the second factors
in the both products are preserved
since such a fiber
consists of $p$-Weil--Petersson curves of the same image.

\begin{proposition}\label{ab}
$(1)$ For every $\gamma_0 \in IW_p$, the projection $\Phi:{\rm WPC}_p \to RM_p$ restricted to $W_p \times \{\gamma_0\} \subset W_p \times IW_p$ 
is a constant map, and hence 
$W_p \times \{\gamma_0\}$ is the fiber of $\Phi$ over $\Phi(\gamma_0)$. 
$(2)$ For every $f \in W_p$, $\Phi$ restricted to
$\{f\} \times IW_p \subset W_p \times IW_p$ is a homeomorphism onto $RM_p \cong T_p^{RM}$, and hence
$\{f\} \times IW_p$ is the section of $\Phi$ through $f$.
\end{proposition}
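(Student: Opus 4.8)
The plan is to establish Proposition~\ref{ab} by transporting both product structures onto $B_p^{\mathbb R}(\mathbb R) \times iB_p^{\mathbb R}(\mathbb R)^\circ$ via $J^{-1} \circ L$, where the fiber/section geometry becomes transparent, and then invoking the homeomorphisms already proved (Theorems~\ref{biholo}, \ref{Jhomeo}, \ref{topgroup} and Proposition~\ref{continuity}) to upgrade bijections to homeomorphisms. First I would recall from Lemma~\ref{arclength} and Proposition~\ref{fibers} that under $J^{-1}\circ L$ the product $W_p \times IW_p$ corresponds to $B_p^{\mathbb R}(\mathbb R) \times iB_p^{\mathbb R}(\mathbb R)^\circ$ with $\gamma \leftrightarrow (u,iv)$ when $L(\gamma)=Q_u(iv)$, and that a fiber $L^{-1}\circ J(B_p^{\mathbb R}(\mathbb R)\times\{iv\})$ is exactly the family of normalized $p$-Weil--Petersson embeddings sharing the image of $\gamma_{iv}$, i.e. a fiber of $\Phi$.

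For part~(1): fix $\gamma_0 \in IW_p$, so $L(\gamma_0)=iv$ for a unique $iv \in iB_p^{\mathbb R}(\mathbb R)^\circ$. The slice $W_p \times \{\gamma_0\}$ corresponds under $J^{-1}\circ L$ to $B_p^{\mathbb R}(\mathbb R)\times\{iv\}$, and by Proposition~\ref{fibers} its image under $L^{-1}\circ J$ is precisely $\Phi^{-1}(\gamma)$ where $\gamma = \Phi\circ L^{-1}\circ J(0,iv)\in RM_p$. Since $\Phi$ is constant on a fiber of itself, $\Phi$ is constant on $W_p \times \{\gamma_0\}$, and because distinct $iv$ give distinct images hence distinct fibers, $W_p \times \{\gamma_0\}$ is the \emph{entire} fiber of $\Phi$ over $\Phi(\gamma_0)$. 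Using $\Phi\circ\widetilde R_{[\nu]}=\Phi$, the constant value equals $\Phi(\gamma_0)=\Phi(\gamma_{iv})$.

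For part~(2): fix $f \in W_p$, corresponding to $u \in B_p^{\mathbb R}(\mathbb R)$, so $\{f\}\times IW_p$ corresponds to $\{u\}\times iB_p^{\mathbb R}(\mathbb R)^\circ$, whose image under $L^{-1}\circ J$ is $L^{-1}\circ J(\{u\}\times iB_p^{\mathbb R}(\mathbb R)^\circ) = \widetilde R_{[\nu]}(IW_p)$ for $[\nu]\in T_p$ with $L([\nu],[\nu])=u$, using Proposition~\ref{Lequation}. Since $\Phi\circ\widetilde R_{[\nu]}=\Phi$, the restriction $\Phi|_{\{f\}\times IW_p}$ is conjugate to $\Phi|_{IW_p}:IW_p\to RM_p$, which is a homeomorphism by Proposition~\ref{continuity}; as $\widetilde R_{[\nu]}$ is itself a biholomorphic automorphism of ${\rm WPC}_p$, the composition is a homeomorphism onto $RM_p\cong T_p^{RM}$. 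Hence $\{f\}\times IW_p$ meets every fiber of $\Phi$ exactly once and is the section of $\Phi$ through $f$ (taking $f={\rm id}$ when $\{f\}\times IW_p = IW_p$ itself).

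The only mild obstacle is bookkeeping: one must be careful that the slices $\{f\}\times IW_p$ and $W_p\times\{\gamma_0\}$ in the product $W_p \times IW_p$ are taken with respect to the $J^{-1}\circ L$-induced identification (not the Bers coordinates), so that the slice directions really correspond to $\{u\}\times iB_p^{\mathbb R}(\mathbb R)^\circ$ and $B_p^{\mathbb R}(\mathbb R)\times\{iv\}$; once this is pinned down, everything follows from Propositions~\ref{Lequation}, \ref{fibers}, \ref{continuity} and the homeomorphism statements already established, with no new estimates required.
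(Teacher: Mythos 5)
Your proof is correct and follows exactly the route the paper intends: the paper omits the details, remarking only that "the proof is similar" to Proposition \ref{fibers} and the preceding discussion, and your argument is precisely that intended one — transport the slices to $B_p^{\mathbb R}(\mathbb R)\times\{iv\}$ and $\{u\}\times iB_p^{\mathbb R}(\mathbb R)^\circ$ via $J^{-1}\circ L$, apply Proposition \ref{fibers} for the fiber statement, and use $\Phi\circ\widetilde R_{[\nu]}=\Phi$ together with Proposition \ref{continuity} to see that the section maps homeomorphically onto $RM_p$. The only superfluous step is the "distinct $iv$ give distinct fibers" remark in part (1), since Proposition \ref{fibers} already asserts equality (not just inclusion) of the slice with the fiber.
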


We remark that in the comparison of the product structures $W_p \times RM_p$ and $W_p \times IW_p$ on ${\rm WPC}_p$,
the fibers to the projection to the first factors are not preserved. The change of the representatives 
$\Pi|_{IW_p}:IW_p \to W_p$ considered in Theorem \ref{CM}
measures the difference between the fibers $RM_p$ and $IW_p$ over the origin.

Finally,
we add one more property to
the biholomorphic mapping 
$L:{\rm WPC}_p \to B_p(\mathbb R)$ restricted to $W_p$,
which is concerning the correspondence of bounded subsets. 
Here, the boundedness in $W_p$ is considered regarding
a metric structure of $W_p \cong T_p^{W}$.
The invariant metric provided for $T_p$ is the $p$-Weil--Petersson metric (see \cite{Cu}, \cite{Mat}, \cite{TT}), and
let $d_p$ denote the $p$-Weil--Petersson distance in $T_p$. This can be defined for $p \geq 1$.

The correspondence $f \mapsto \log f'$ for $f \in W_p$ gives
a real-analytic equivalence of 
the $p$-Weil--Petersson Teichm\"uller space $T_p$ to $B_p^{\mathbb R}(\mathbb R)$
(see Corollary \ref{real-analytic}). Translating Lemma \ref{nearid}  to $B_p^{\mathbb R}(\mathbb R)$ for $p>1$,
we see that $\Vert \log (f\circ h)' \Vert_{B_p} \to 0$ as $\Vert \log f' \Vert_{B_p} \to 0$ and
$\Vert \log h' \Vert_{B_p} \to 0$ for $f, h \in W_p$. 
Extending this consequence to a claim for the variable change operator $P_h$, we obtain the following proposition.
For BMO norm, an analogues result was stated in \cite[p.18]{CM}.
We have seen that $P_h$ is a bounded linear operator in Proposition \ref{pullback}.

\begin{proposition}\label{uniformPh}
For any $h \in W_p$,
let $P_h:B_p(\mathbb R) \to B_p(\mathbb R)$ be the bounded linear operator defined by
$w \mapsto w \circ h$ for every $w \in B_p(\mathbb R)$. Then, there exist 
constants $\tau_0>0$ and $C_0>0$ such that
the operator norm of $P_h$ satisfies $\Vert P_h \Vert \leq C_0$
for every $h \in W_p$ with $\Vert \log h' \Vert_{B_p} \leq \tau_0$.
\end{proposition}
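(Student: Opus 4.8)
The plan is to extract the uniform operator bound directly from the $B_p$-translation of Lemma~\ref{nearid} recorded in the paragraph just above, using that $P_h$ is \emph{linear} together with the splitting of a complex-valued function into its real and imaginary parts. First I would record the elementary identity $\log(f\circ h)'=P_h(\log f')+\log h'$ for normalized $f,h\in W_p$, which holds as an equality of functions on $\mathbb R$ and hence of elements of $B_p(\mathbb R)$.

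Next, the translation of Lemma~\ref{nearid} asserts that $\Vert\log(f\circ h)'\Vert_{B_p}\to 0$ as $\Vert\log f'\Vert_{B_p}\to 0$ and $\Vert\log h'\Vert_{B_p}\to 0$. Applying this with the value $\varepsilon=1$ yields a constant $\delta>0$ such that $\Vert\log f'\Vert_{B_p}\le\delta$ and $\Vert\log h'\Vert_{B_p}\le\delta$ imply $\Vert P_h(\log f')+\log h'\Vert_{B_p}\le 1$, and hence $\Vert P_h(\log f')\Vert_{B_p}\le 1+\delta$ by the triangle inequality. Since $L|_{W_p}$ maps $W_p$ onto all of $B_p^{\mathbb R}(\mathbb R)$ by Corollary~\ref{real-analytic}(1), this says: for every real-valued $v\in B_p^{\mathbb R}(\mathbb R)$ with $\Vert v\Vert_{B_p}\le\delta$ and every $h\in W_p$ with $\Vert\log h'\Vert_{B_p}\le\delta$, one has $\Vert P_h(v)\Vert_{B_p}\le 1+\delta$. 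By the linearity of $P_h$ (Proposition~\ref{pullback}) and homogeneity, this upgrades to $\Vert P_h(v)\Vert_{B_p}\le\frac{1+\delta}{\delta}\Vert v\Vert_{B_p}$ for \emph{all} $v\in B_p^{\mathbb R}(\mathbb R)$. Setting $\tau_0=\delta$ and $C_1=(1+\tau_0)/\tau_0$, the operator norm of the restriction $P_h|_{B_p^{\mathbb R}(\mathbb R)}$ is therefore at most $C_1$ whenever $\Vert\log h'\Vert_{B_p}\le\tau_0$.

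It then remains to pass from real-valued to complex-valued functions. Given $w\in B_p(\mathbb R)$, write $w={\rm Re}\,w+i\,{\rm Im}\,w$ with ${\rm Re}\,w,\,{\rm Im}\,w\in B_p^{\mathbb R}(\mathbb R)$; because $h$ maps $\mathbb R$ into $\mathbb R$, the operator $P_h$ commutes with taking real and imaginary parts, so $P_h(w)=P_h({\rm Re}\,w)+i\,P_h({\rm Im}\,w)$. From the pointwise bounds $|{\rm Re}(w(t)-w(s))|,\,|{\rm Im}(w(t)-w(s))|\le|w(t)-w(s)|$ one gets $\Vert{\rm Re}\,w\Vert_{B_p},\,\Vert{\rm Im}\,w\Vert_{B_p}\le\Vert w\Vert_{B_p}$, and hence $\Vert P_h(w)\Vert_{B_p}\le C_1(\Vert{\rm Re}\,w\Vert_{B_p}+\Vert{\rm Im}\,w\Vert_{B_p})\le 2C_1\Vert w\Vert_{B_p}$. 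Thus $\Vert P_h\Vert\le C_0:=2C_1$ for every $h\in W_p$ with $\Vert\log h'\Vert_{B_p}\le\tau_0$, which is the assertion.

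The argument is essentially bookkeeping, so there is no serious obstacle; the only point requiring a little care is that the $B_p$-translation of Lemma~\ref{nearid} must be used in the form $\Vert P_h(v)\Vert_{B_p}\le\Vert\log(\gamma_v\circ h)'\Vert_{B_p}+\Vert\log h'\Vert_{B_p}$, after which one is entitled to pass from ``small $v$'' to ``all $v$'' by homogeneity---this is where the \emph{linearity} of $P_h$, rather than mere continuity of the map $h\mapsto P_h(w)$, is decisive. One could alternatively deduce the claim from the Banach--Steinhaus theorem once it is known that $\{P_h(w):\Vert\log h'\Vert_{B_p}\le\tau_0\}$ is bounded in $B_p(\mathbb R)$ for each fixed $w$, but the direct homogeneity argument is more elementary and yields the explicit constant above.
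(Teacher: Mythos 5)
Your proof is correct and follows essentially the same route as the paper's: translate Lemma \ref{nearid} into the $B_p$-norm via the chain-rule identity $\log(f\circ h)'=P_h(\log f')+\log h'$ together with the surjectivity of $f\mapsto\log f'$ onto $B_p^{\mathbb R}(\mathbb R)$, upgrade the small-ball bound to an operator-norm bound by linearity and homogeneity, and pass to complex-valued $w$ by splitting into real and imaginary parts. The only differences are cosmetic (separate radii $\delta_0,\tau_0$ versus a single $\delta$, and the exact value of $C_0$).
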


\begin{proof}
Lemma \ref{nearid} implies that there exist $\delta_0 >0$ and $\tau_0>0$
such that if $\Vert \log f' \Vert_{B_p} \leq \delta_0$ and
$\Vert \log h' \Vert_{B_p} \leq \tau_0$, then $\Vert \log (f\circ h)' \Vert_{B_p} \leq 1$.
We may choose $\tau_0$ so that $\tau_0 \leq 1$. Then, for any $u=\log f' \in B_p^{\mathbb R}(\mathbb R)$
with $\Vert u \Vert_{B_p} \leq \delta_0$ and $h \in W_p$ with $\Vert \log h'\Vert_{B_p} \leq \tau_0$,
we have
$$
\Vert P_h(u) \Vert_{B_p} \leq \Vert \log(f \circ h)'\Vert_{B_p}+\Vert \log h'\Vert_{B_p}\leq 2.
$$
For $w=u+iv \in B_p(\mathbb R)$ with $\Vert w \Vert_{B_p} = \delta_0$, we apply this estimate to
$\Vert P_h(u) \Vert_{B_p}$ and $\Vert P_h(v) \Vert_{B_p}$; we obtain that
if $\Vert w \Vert_{B_p} = \delta_0$ and $\Vert \log h'\Vert_{B_p} \leq \tau_0$, then 
$\Vert P_h(w) \Vert_{B_p} \leq 4$. 
This completes the proof by setting $C_0=4/\delta_0$.
\end{proof}

\begin{lemma}\label{general}
For any $h \in W_p$,
the operator norm $\Vert P_h \Vert$ of the variable change operator $P_h:B_p(\mathbb R) \to B_p(\mathbb R)$
depends only on the $p$-Weil--Petersson distance $d_p(h,\rm id)$ for $p>1$.
\end{lemma}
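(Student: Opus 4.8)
The plan is to deduce this from Proposition~\ref{uniformPh} together with the topological group property of $T_p$ and a chain-rule decomposition. The statement to be shown is that $\Vert P_h\Vert$ depends only on $d_p(h,{\rm id})$; concretely, I would prove that for each $R>0$ there is a constant $C(R)$ such that $\Vert P_h\Vert\le C(R)$ whenever $d_p(h,{\rm id})\le R$. Since $d_p$ is the left (and right) invariant $p$-Weil--Petersson distance on $T_p$, this immediately gives that $\Vert P_h\Vert$ is controlled by, hence ``depends only on,'' the distance class of $h$.

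First I would fix the small radius $\tau_0>0$ and constant $C_0>0$ produced by Proposition~\ref{uniformPh}, so that $\Vert P_k\Vert\le C_0$ for every $k\in W_p$ with $\Vert\log k'\Vert_{B_p}\le\tau_0$. Because $L|_{W_p}$ is a real-analytic homeomorphism onto $B_p^{\mathbb R}(\mathbb R)$ (Corollary~\ref{real-analytic}) and $d_p$ metrizes the topology of $T_p\cong W_p$, there is a radius $r_0>0$ such that $d_p(k,{\rm id})\le r_0$ implies $\Vert\log k'\Vert_{B_p}\le\tau_0$; thus $\Vert P_k\Vert\le C_0$ for all $k$ in the $d_p$-ball of radius $r_0$ about the identity. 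Next, given $h\in W_p$ with $d_p(h,{\rm id})\le R$, I would connect $h$ to ${\rm id}$ along a $d_p$-geodesic (or, if geodesics are not available in this Banach setting, along any path of uniformly bounded length, which suffices) and subdivide it into $N=\lceil R/r_0\rceil$ consecutive pieces, writing $h=k_N\circ k_{N-1}\circ\cdots\circ k_1$ in $W_p$ where each $k_j$ satisfies $d_p(k_j,{\rm id})\le r_0$ by the invariance of $d_p$ under the group operation. This decomposition uses precisely the topological group structure of Theorem~\ref{topgroup}: the composition $\ast$ is continuous, so the intermediate points along the path lie in $W_p$ and the ``increments'' $k_j$ are well-defined elements of $W_p$ near the identity.

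Then I would use the multiplicativity of the variable change operator under composition: since $P_{h}(w)=w\circ h = w\circ k_N\circ\cdots\circ k_1 = P_{k_1}\circ P_{k_2}\circ\cdots\circ P_{k_N}(w)$, we get
\begin{equation*}
\Vert P_h\Vert\le\prod_{j=1}^{N}\Vert P_{k_j}\Vert\le C_0^{N}\le C_0^{\lceil R/r_0\rceil}=:C(R).
\end{equation*}
This bound depends only on $R$, i.e.\ only on the $d_p$-distance from $h$ to the identity, as claimed. Combined with the lower bound $\Vert P_h\Vert\ge 1$ (which holds since $P_h$ is a Banach space isomorphism with $P_{h^{-1}}$ its inverse, and one can apply the same estimate to $P_{h^{-1}}$), one sees that the operator norm is squeezed between quantities determined by $d_p(h,{\rm id})$, which is the assertion of the lemma.

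The main obstacle I anticipate is the path-subdivision step: one must be sure that a point $h$ with $d_p(h,{\rm id})\le R$ really can be joined to the identity by a curve in $T_p$ of length comparable to $R$, and that the finitely many ``increments'' obtained by breaking this curve are genuine elements of $W_p$ near the identity with $d_p$-controlled size. The $p$-Weil--Petersson metric is an invariant Finsler (or, for $p=2$, Riemannian) metric on the Banach manifold $T_p$, and $d_p$ is the associated length distance, so near any point there are curves whose length approximates the distance arbitrarily well; translating such a curve by the group action (legitimate by Theorem~\ref{topgroup}) and reading off its successive increments gives the factorization. Care is needed because $d_p$ may a priori be only a pseudometric or the manifold may not be geodesically complete, but for the purpose of the bound one only needs approximate geodesics of length $\le R+\varepsilon$, which suffices to choose $N=\lceil(R+1)/r_0\rceil$ pieces; this is a routine adjustment. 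Once the decomposition is in hand, the submultiplicativity of operator norms and the uniform estimate of Proposition~\ref{uniformPh} finish the argument without further difficulty.
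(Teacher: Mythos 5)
Your proposal is correct and follows essentially the same route as the paper: choose $r_0$ so that the $d_p$-ball of radius $r_0$ about the identity maps into the $\tau_0$-ball of $B_p^{\mathbb R}(\mathbb R)$, subdivide a near-geodesic from ${\rm id}$ to $h$ into increments of $d_p$-size less than $r_0$ using right-invariance of $d_p$, and bound $\Vert P_h\Vert$ by $C_0^n$ via the multiplicativity of $P$ under composition and Proposition \ref{uniformPh}. The only cosmetic difference is that the paper takes the number of subdivision points to be minimal so that $n$ is determined by $d_p(h,{\rm id})$, which is the same as your $N=\lceil R/r_0\rceil$ bookkeeping.
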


\begin{proof}
For the constant $\tau_0$ in Proposition \ref{uniformPh}, we choose a constant $r_0>0$ such that
if $h \in W_p$ satisfies $d_p(h,{\rm id}) \leq r_0$ then $\Vert \log h' \Vert_{B_p} \leq \tau_0$.
Any element $h \in W_p$ can be joined to $\rm id$ by a curve in $W_p$
with its length arbitrary close to $d_p(h,{\rm id})$. We choose the minimal number of consecutive points
$$
{\rm id}=h_0, h_1, \ldots, h_n=h
$$
on the curve such that $d_p(h_i,h_{i-1}) < r_0$ for any $i=1,\ldots,n$. Then, the number $n$ is determined by
$d_p(h,\rm id)$, and the invariance of $d_p$ under the right translation implies that the composition
$h_i \circ h_{i-1}^{-1}$ satisfies 
$d_p(h_i \circ h_{i-1}^{-1},{\rm id}) <r_0$, and hence $\Vert \log (h_i \circ h_{i-1}^{-1})'\Vert_{B_p} \leq \tau_0$.
By decomposing $h$ into these $n$ mappings, we have
$$
P_h=P_{h_1 \circ h_0^{-1}} \circ P_{h_2 \circ h_1^{-1}} \circ \cdots \circ P_{h_n \circ h_{n-1}^{-1}}.
$$
Then, Proposition \ref{uniformPh} shows that $\Vert P_h \Vert \leq C_0^n$.
\end{proof}

\begin{remark}
For $p=2$, this claim follows from the stronger result mentioned in the remark after Proposition \ref{pullback}.
This is because the Teichm\"uller distance $d_\infty$ is bounded by a certain multiple of 
the $p$-Weil--Petersson distance $d_p$, that is, $d_\infty \lesssim d_p$. See \cite[Proposition 6.10]{Mat1}.
\end{remark}

\begin{theorem}\label{b-b}
For a bounded subset $W \subset W_p$ for $p>1$, the image $L(W)$ is
also bounded in $B_p^{\mathbb R}(\mathbb R)$. In more details, if $h \in W_p$ is
within distance $r$ from $\rm id$ in the $p$-Weil--Petersson distance $d_p$, then $\Vert \log h' \Vert_{B_p}$
is bounded by a constant depending only on $r$. 
\end{theorem}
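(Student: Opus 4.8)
The plan is to obtain this as a by-product of the argument proving Lemma~\ref{general}, run on $\log h'$ itself rather than on the operator $P_h$. I recall the constants $\tau_0,C_0>0$ from Proposition~\ref{uniformPh} and the radius $r_0>0$ chosen in the proof of Lemma~\ref{general} so that $d_p(g,{\rm id})\le r_0$ forces $\Vert\log g'\Vert_{B_p}\le\tau_0$ for $g\in W_p$. Given $h\in W_p$ with $d_p(h,{\rm id})\le r$, I would join $h$ to ${\rm id}$ by a curve in $W_p$ of length less than $r+1$ and take consecutive points ${\rm id}=h_0,h_1,\dots,h_n=h$ on it with $d_p(h_i,h_{i-1})<r_0$; the minimal such number $n$ is bounded in terms of $r$ alone, so from now on $n=n(r)$.

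Next I would write $h=g_n\circ\cdots\circ g_1$ with $g_i=h_i\circ h_{i-1}^{-1}\in W_p$ (membership in $W_p$ being the topological group property, Theorem~\ref{topgroup}). By the right invariance of $d_p$ one has $d_p(g_i,{\rm id})=d_p(h_i,h_{i-1})<r_0$, hence $\Vert\log g_i'\Vert_{B_p}\le\tau_0$ for every $i$. Iterating the chain rule $\log(g\circ f)'=P_f(\log g')+\log f'$ — legitimate because every partial composition $g_{k-1}\circ\cdots\circ g_1$ again lies in $W_p$ and is therefore locally absolutely continuous with $\log$-derivative in $B_p^{\mathbb R}(\mathbb R)$ by Lemma~\ref{real-1}, while $P$ acts boundedly by Proposition~\ref{pullback} — I arrive at
$$
\log h'=\sum_{k=1}^{n}P_{h_{k-1}}(\log g_k'),
$$
where $h_{k-1}=g_{k-1}\circ\cdots\circ g_1$ by telescoping (using $h_0={\rm id}$).

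Finally I would estimate the norm. Since $P_{h_{k-1}}=P_{g_1}\circ P_{g_2}\circ\cdots\circ P_{g_{k-1}}$ and each factor has operator norm at most $C_0$ by Proposition~\ref{uniformPh}, we get $\Vert P_{h_{k-1}}\Vert\le C_0^{\,k-1}\le C_0^{\,n}$, whence
$$
\Vert\log h'\Vert_{B_p}\le\sum_{k=1}^{n}\Vert P_{h_{k-1}}\Vert\,\Vert\log g_k'\Vert_{B_p}\le n\,C_0^{\,n}\,\tau_0,
$$
a bound depending only on $r$ through $n=n(r)$. The first assertion then follows, since any $d_p$-bounded $W\subset W_p$ lies, by the triangle inequality, in some $d_p$-ball about ${\rm id}$, and the displayed estimate controls $\Vert\log h'\Vert_{B_p}$ uniformly over $W$.

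I do not expect a genuine obstacle here: the theorem is a packaging of Lemma~\ref{general} and Proposition~\ref{uniformPh}, and the work is bookkeeping. The two points needing care are that the number of pieces $n$ depends only on $r$ — already ensured by the choice of $r_0$ in the proof of Lemma~\ref{general}, namely by the $d_p$-continuity of $h\mapsto\log h'$ at the identity — and that every intermediate composition stays inside $W_p$ with $\log$-derivative in $B_p^{\mathbb R}(\mathbb R)$, which is guaranteed by the topological group property (Theorem~\ref{topgroup}) together with Lemma~\ref{real-1}.
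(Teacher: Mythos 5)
Your proposal is correct and follows essentially the same route as the paper: the same decomposition $h=(h_n\circ h_{n-1}^{-1})\circ\cdots\circ(h_1\circ h_0^{-1})$ into $n(r)$ pieces with $\Vert\log(h_i\circ h_{i-1}^{-1})'\Vert_{B_p}\le\tau_0$, the same telescoped chain rule, and the same operator-norm bound from Proposition \ref{uniformPh}, yielding the paper's estimate $\sum_{k=0}^{n-1}C_0^{k}\tau_0$ (which you merely coarsen to $nC_0^{n}\tau_0$).
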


\begin{proof}
For $h \in W_p$ with $d_p(h,{\rm id}) \leq r$, we decompose $h$ as in the proof of
Lemma \ref{general}. Then, by this lemma, we have
\begin{align*}
\Vert \log h'\Vert_{B_p} 
&=\Vert \log ((h_n \circ h_{n-1}^{-1}) \circ (h_{n-1} \circ h_{n-2}^{-1}) \circ \cdots \circ (h_{1} \circ h_{0}^{-1}) )'\Vert_{B_p}\\
& \leq \Vert P_{h_1 \circ h_0^{-1}} \circ P_{h_2 \circ h_1^{-1}} \circ \cdots \circ P_{h_{n-1} \circ h_{n-2}^{-1}}(\log (h_n \circ h_{n-1}^{-1})')\Vert_{B_p}\\
&\quad + \cdots +\Vert \log(h_{1} \circ h_{0}^{-1})' \Vert_{B_p}\\
& \leq C_0^{n-1} \tau_0+ C_0^{n-2} \tau_0 + \cdots +\tau_0.
\end{align*}
Since $n$ depends only on $r$, the statement is proved.
\end{proof}

Developing Theorem \ref{b-b} further,
we expect (a) for any bounded subset 
$\widetilde W \subset {\rm WPC}_p$, the image $L(\widetilde W)$ is
bounded in $B_p(\mathbb R)$; (b) conversely, for any bounded subset $B \subset B_p^{\mathbb R}(\mathbb R)$,
the inverse image $L^{-1}(B)$ is bounded in $W_p$. 

Concerning statement (a),
the boundedness on ${\rm WPC}_p \cong T_p(\mathbb U) \times T_p(\mathbb L)$ is defined by
the product of the $p$-Weil--Petersson metrics. In order to prove (a), it suffices to show the 
boundedness of the image of a bounded set under the embedding $T_p \to \mathcal B_p$ into the pre-Schwarzian derivative model.
This is seen by the representation of $L$ in the proof of Theorem \ref{holo}. For the
Bers embedding $T_p \to A^p$ (see Appendix), this follows from \cite[Proposition 8.4]{Mat}, but the embedding into $\mathcal B_p$
seems more involved. 

Concerning statement (b), 
if we could prove that $\Vert \mu_u \Vert_p$ and 
$\Vert \mu_u \Vert_\infty<1$ are dominated by $\Vert u \Vert_{B_p}$
for the complex dilatation $\mu_u$ of a certain quasiconformal extension $F_u:\mathbb U \to \mathbb U$ 
of $\gamma_u:\mathbb R \to \mathbb R$,
we would obtain the result from \cite[Theorem 5.4]{Mat}.
However,
we only have the estimates of $\Vert \mu_u \Vert_p$ and $\Vert \mu_u \Vert_\infty$ in terms of $\Vert u \Vert_{B_p}$
in the case where $F_u$ is the variant of the Beurling--Ahlfors extension by the heat kernel and
$\Vert u \Vert_{B_p}$ is sufficiently small (see \cite[Proposition 3.5,
Lemma 4.2]{WM-3}).

\section{Appendix: The pre-Schwarzian derivative model on the upper half-plane}

In this appendix, we provide a complex Banach manifold structure for 
the $p$-Weil--Peters\-son Teichm\"uller space $T_p=T_p(\mathbb U)$ by using
pre-Schwarzian derivatives on $\mathbb U$. This is well-known in the case of
the unit disk $\mathbb D$ for $p=2$. However, since the pre-Schwarzian derivative is not
M\"obius invariant, we carefully treat the case of $\mathbb U$. We will see below that there is a certain advantage of
considering the pre-Schwarzian derivative model on $\mathbb U$ compared with $\mathbb D$.
The generalization to any $p > 1$ is also mentioned.
We note that if we use the Schwarzian derivative model, there is no difference between $\mathbb U$ and $\mathbb D$
due to its M\"obius invariance, and $T_p$ is equipped with
the complex Banach manifold structure for $p \geq 1$. 

Let $A^\infty(\mathbb U)$ denote the Banach space of functions $\varphi$ holomorphic on $\mathbb U$ with norm 
$$
\Vert \varphi \Vert_{A^\infty} = \sup_{z \in \mathbb U} |\varphi(z)|y^2. 
$$
For $p \geq 1$, we also denote by $A^p(\mathbb U)$ the Banach space of holomorphic functions $\varphi$ on $\mathbb U$ with norm
$$
\Vert \varphi \Vert_{A^p} = \left(\frac{1}{\pi}\iint_{\mathbb U} |\varphi(z)|^p y^{2p-2} dxdy \right)^{\frac{1}{p}}. 
$$

For any locally univalent function $g$, the derivative of the logarithm $\mathcal L_g$ and  
the Schwarzian derivative $\mathcal S_g$ are defined respectively by
\begin{equation*}\label{SL}
\mathcal L_g = \log g', \quad \mathcal S_g= \mathcal L_g'' - \frac{1}{2}(\mathcal L_g')^2.
\end{equation*}
The derivative of $\mathcal L_g$ is called the {\it pre-Schwarzian derivative} of $g$. 
We will show the following result on the upper half-plane $\mathbb U$. 
In the case of the unit disk $\mathbb D$, the corresponding theorem for $p \geq 2$ was proved in \cite[Theorems 1, 2]{Gu}.
However, since $\mathcal L_g$ is not M\"obius invariant, 
this is not straightforward from that on $\mathbb D$. As mentioned below, the case of $p=2$ was proved in \cite{STW}.

\begin{theorem}\label{Guo11}
Let $p > 1$. Let $g:\mathbb U \to \mathbb C$ be a conformal mapping on $\mathbb U$ and can be extended to the whole plane $\mathbb C$ quasiconformally  such that
$\lim_{z \to \infty}g(z)=\infty$. Then, the following statements are equivalent $\!:$ 
\begin{enumerate}
\item[(a)]
$g$ extends to a quasiconformal homeomorphism of  $\mathbb C$ whose complex dilatation $\mu$ belongs to $\mathcal M_p(\mathbb L);$
\item[(b)]
$\mathcal L_g \in \mathcal B_p(\mathbb U);$
\item[(c)]
$\mathcal S_g \in A^p(\mathbb U)$.
\end{enumerate}
\end{theorem}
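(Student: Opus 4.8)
The plan is to prove the cycle of implications (a) $\Rightarrow$ (b) $\Rightarrow$ (c) $\Rightarrow$ (a), relying on known results on the unit disk together with careful bookkeeping of how the pre-Schwarzian fails to be M\"obius invariant under the Cayley transform. The equivalence (b) $\Leftrightarrow$ (c) is the purely analytic part and should be the easiest: given the relation $\mathcal S_g = \mathcal L_g'' - \frac12 (\mathcal L_g')^2$, one shows that membership $\mathcal L_g \in \mathcal B_p(\mathbb U)$ is equivalent to $\mathcal S_g \in A^p(\mathbb U)$ for a conformal map $g$ with a quasiconformal extension. The direction $\mathcal L_g \in \mathcal B_p \Rightarrow \mathcal S_g \in A^p$ uses that $\mathcal B_p(\mathbb U)$ is closed under the Bloch-type multiplication (since $\mathcal B_p \subset \mathcal B$ and $|\mathcal L_g'(z)| y$ is bounded when $g$ is univalent with quasiconformal extension, so $(\mathcal L_g')^2 \in A^p$) together with the standard fact that $\varphi \in \mathcal B_p \Rightarrow \varphi' \in A^p$; the reverse direction integrates $\mathcal S_g$ back up, using univalence to control the lower-order terms. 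One may import the disk version of this equivalence from \cite{Gu} and \cite{STW} and transfer it via the Cayley transform $T$, noting that $\mathcal S_g$ transforms as a quadratic differential (so $A^p(\mathbb D) \leftrightarrow A^p(\mathbb U)$ isometrically up to the weight computed in Section 2) while for $\mathcal L_g$ one has the cocycle correction $\mathcal L_{g \circ T} = \mathcal L_g \circ T + \mathcal L_T$, and $\mathcal L_T \in \mathcal B_p$ must be checked directly.

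For (a) $\Rightarrow$ (b) the idea is to reduce to the unit disk. If $g$ has a quasiconformal extension with dilatation $\mu \in \mathcal M_p(\mathbb L)$, transfer by the Cayley transform to a conformal map $\tilde g = T_* g$ on $\mathbb D$ with dilatation in $\mathcal M_p$ of the exterior disk; the disk result \cite[Theorems 1,2]{Gu} (for $p \geq 2$) and \cite{STW} (for $p=2$), extended to $p>1$ as the paper indicates is routine, gives $\mathcal L_{\tilde g} \in \mathcal B_p(\mathbb D)$. The only extra point is the transfer back: $\mathcal L_g = \mathcal L_{\tilde g} \circ T - \mathcal L_T \circ \tilde{}$, i.e. one must subtract the pre-Schwarzian of the Cayley map, and one checks $\mathcal L_T \in \mathcal B_p(\mathbb U)$ — this is a one-line estimate since $T'(z) = 2i/(z+i)^2$, so $\mathcal L_T'(z) = -2/(z+i)$ and $|\mathcal L_T'(z)| y \le 2y/|z+i| \le 2$, while the $\mathcal B_p$ integral $\iint_{\mathbb U} |z+i|^{-p} y^{p-2}\,dxdy$ converges for $p>1$.

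For (c) $\Rightarrow$ (a) — which I expect to be the main obstacle — the plan is to use the Schwarzian embedding of the Weil--Petersson Teichm\"uller space. Since $g$ is conformal on $\mathbb U$ with \emph{some} quasiconformal extension (this hypothesis is assumed), $\mathcal S_g$ is a point of the Bers-embedded image of $T(\mathbb U)$ in $A^\infty(\mathbb U)$; the condition $\mathcal S_g \in A^p(\mathbb U)$ then says this point lies in the $p$-Bers slice, and the characterization of $T_p$ via the Bers embedding (cited in Section 2 as \cite[Theorem 4.4]{Ya}, \cite[Theorem 2.1]{TS}, \cite[Theorem 7.1]{WM-1}) produces a quasiconformal extension whose dilatation on $\mathbb L$ lies in $\mathcal M_p(\mathbb L)$. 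The delicate part is that the Bers reproduction formula (the Ahlfors--Weill–type section) constructs an extension from the Schwarzian, and one must verify that the resulting Beltrami coefficient is genuinely $p$-integrable against the hyperbolic metric — this is precisely where one invokes that $A^p \to \mathcal M_p$ correspondence is already established in the cited literature for $p \geq 2$ and in \cite{WM-1} for $p>1$, so that here it suffices to quote it. If one prefers a self-contained route, one proves (c) $\Rightarrow$ (b) first (the integration argument above, noting $\mathcal S_g \in A^p$ with $g$ univalent forces $\mathcal L_g' \in A^p$ by a Hardy-type inequality / the Koebe distortion bound $|\mathcal S_g(z)|y^2 \le c$), then (b) $\Rightarrow$ (a) by feeding $\mathcal L_g \in \mathcal B_p(\mathbb U)$ into the Beurling--Ahlfors–by–heat-kernel extension of \cite{FKP,WM-3}, which by \cite[Proposition 3.5, Lemma 4.2]{WM-3} yields a dilatation in $\mathcal M_p$; the subtlety there is that those estimates are stated for small norm, so one patches via post-composition with a fixed quasiconformal map and the group structure of $T_p$ to remove the smallness assumption. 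Either way, the crux is controlling the $p$-integrability of the constructed dilatation, and everything else is transfer-of-structure under $T$ and routine univalent-function estimates.
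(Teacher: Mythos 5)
Your cycle structure is reasonable, and your (c) $\Rightarrow$ (a) step matches what the paper does (it simply cites \cite[Theorem 7.1]{WM-1} for (c) $\Rightarrow$ (a), \cite[Lemma 4.1]{WM-1} for (a) $\Rightarrow$ (c), and \cite[Theorem 4.4]{STW} for (b) $\Leftrightarrow$ (c), the latter carried out directly on $\mathbb U$). But there is a genuine error at a load-bearing point of your transfer argument: the claim that $\mathcal L_T \in \mathcal B_p(\mathbb U)$ is false. Your one-line estimate $|\mathcal L_T'(z)|\,y \le 2$ only shows $\mathcal L_T$ lies in the Bloch space $\mathcal B(\mathbb U)$. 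The $\mathcal B_p$ integral does \emph{not} converge: with $\mathcal L_T'(z) = -2/(z+i)$ and polar coordinates $z = re^{i\theta}$, the integrand of
$$\iint_{\mathbb U} \frac{y^{p-2}}{|z+i|^p}\,dx\,dy$$
behaves for large $r$ like $r^{p-2}\sin^{p-2}\theta \cdot r^{-p}\cdot r\,dr\,d\theta = r^{-1}\sin^{p-2}\theta\,dr\,d\theta$, so the radial integral diverges logarithmically at infinity (the angular integral converges for $p>1$, which is perhaps what misled you). This is not a repairable slip in your framework: it is exactly the obstruction the paper flags in the Appendix, where it is stated explicitly that $\log(T^{-1})'$ does \emph{not} belong to $\mathcal B_p(\mathbb D)$, in contrast to the Bloch-space situation for the universal Teichm\"uller space. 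The whole reason Theorem \ref{Guo11} is stated and treated carefully on $\mathbb U$ rather than deduced from the disk version is that the pre-Schwarzian cocycle $\mathcal L_{g\circ T^{-1}} = \mathcal L_g\circ T^{-1} + \log(T^{-1})'$ does \emph{not} respect the $\mathcal B_p$ class.

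Consequently your (a) $\Rightarrow$ (b) argument, and the Cayley-transfer variant of (b) $\Leftrightarrow$ (c), both collapse as written. The fix is to route everything through the Schwarzian, which \emph{is} M\"obius covariant as a quadratic differential, so that $A^p(\mathbb U) \leftrightarrow A^p(\mathbb D)$ transfers isometrically: prove (a) $\Leftrightarrow$ (c) by transfer or citation, and prove (b) $\Leftrightarrow$ (c) directly on $\mathbb U$ (your sketch for that direction --- using the univalence bound $|\mathcal L_g'(z)|y \lesssim 1$ to get $(\mathcal L_g')^2 \in A^p$ from $\mathcal L_g' \in A^p$-with-shifted-weight, plus the standard derivative-shifts-the-weight fact --- is sound and is essentially the \cite{STW} argument). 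Your final remark about patching the small-norm heat-kernel estimates via the group structure of $T_p$ is a plausible alternative for (b) $\Rightarrow$ (a), but it is not needed once (a) $\Leftrightarrow$ (c) and (b) $\Leftrightarrow$ (c) are in place.
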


\begin{proof}[Proof of Theorem \ref{Guo11}]
The equivalence of (b) and (c) for $p = 2$ was investigated in \cite[Theorem 4.4]{STW}. Essentially the same argument is valid for general $p > 1$.  
The implication (a) $\Rightarrow$ (c) for $p \geq 1$ is asserted in \cite[Lemma 3.2]{WM-1}, and (c) $\Rightarrow$ (a) for $p \geq 1$ is contained in \cite[Theorem 4.1]{WM-1}.  
The equivalence of (a) and (c)  was proved formerly by \cite[Theorem 2]{Cu} for $p = 2$, and by \cite[Theorems 2]{Gu} and \cite[Theorem 2.1]{ST} for  
$p \geq 2$.
\end{proof} 

Under these preparations, we introduce the pre-Schwarzian derivative model of Teich\-m\"ul\-ler spaces on $\mathbb U$.
Let $g:\mathbb U \to \mathbb C$ be a conformal mapping on $\mathbb U$ satisfying the condition $g(\infty)=\infty$
(i.e. $\lim_{z \to \infty}g(z)=\infty$) that extends to a quasiconformal homeomorphism of 
the whole plane $\mathbb C$. Then, the set $\mathscr T$ of $\mathcal S_g \in A^\infty(\mathbb U)$ for all such $g$ 
is the Schwarzian derivative model of the universal Teichm\"uller space $T$, and
the set $\mathcal T$ of $\mathcal L_g \in {\mathcal B}(\mathbb U)$ for all such $g$ 
is the pre-Schwarzian derivative model of $T$. It is known that $\mathscr T$ is
a bounded domain in $A^\infty(\mathbb U)$ identified with $T$ (the Bers embedding), 
which defines a complex Banach manifold structure for $T$ (see \cite[Section III.4]{Le}).

However, if we do not impose the condition $g(\infty)=\infty$
on $g$ and consider $\mathcal L_g$ for all those $g$, then they are classified into uncountably many components and
$\mathcal T$ is the one containing $0$.
To see this, we consider the conformal mapping $\tilde g=g \circ T^{-1}$ 
of $\mathbb D$ pushed forward by the Cayley transformation $T:\mathbb U \to \mathbb D$.
Since 
$$
\log \tilde g'=T_* \mathcal L_g + \log(T^{-1})'
$$
and $\log(T^{-1})' \in \mathcal B(\mathbb D)$, this defines an affine isometry 
$\mathcal B(\mathbb U) \to \mathcal B(\mathbb D)$. Under this isometric isomorphism,
the components 
in ${\mathcal B}(\mathbb U)$ correspond to
those in ${\mathcal B}(\mathbb D)$ bijectively, which are ${\mathcal T}_\omega(\mathbb D)$ ($\omega \in \mathbb S$)
and ${\mathcal T}_{\rm bdd}(\mathbb D)$ characterized by the property that a point 
$\omega \in \mathbb S$ or no point of $\mathbb S$ is mapped to $\infty$ by 
the conformal mapping $\tilde g:\mathbb D \to \mathbb C$. See \cite{Zhur} and \cite[Section 4]{Sug}.
Then, the component $\mathcal T \subset \mathcal B(\mathbb U)$ containing $0$ corresponds to 
${\mathcal T}_{T(\infty)}(\mathbb D)={\mathcal T}_1(\mathbb D)$,
which is biholomorphically equivalent to the universal Teichm\"uller space $T \cong \mathscr T$.

Theorem \ref{Guo11} guarantees that we can also consider the pre-Schwarzian derivative
model of the $p$-Weil--Petersson Teichm\"uller space $T_p$ for $p > 1$ in the same manner.
Concerning the correspondence between the spaces on $\mathbb U$ and $\mathbb D$, there is
a difference from the case of $T$, which is a fact that $\log(T^{-1})'$ does not belong to $\mathcal B_p(\mathbb D)$.
Nevertheless, Theorem \ref{Guo11} asserts that there is a bijective correspondence
between $\mathcal T_p=\mathcal T \cap \mathcal B_p(\mathbb U)$ and $\mathscr T_p =\mathscr T \cap A^p(\mathbb U)$ 
under the map $\alpha:\mathcal B_p(\mathbb U) \to A^p(\mathbb U)$ 
given by $\alpha(\varphi)=\varphi''-(\varphi')^2/2$ from $\mathcal S_g= \mathcal L_g'' - \frac{1}{2}(\mathcal L_g')^2$. 
It was proved in \cite[Lemma 2.3]{TS} that $\alpha$ is holomorphic in the case of $\mathbb D$ for $p \geq 2$.
This is also true in the case of $\mathbb U$ for $p > 1$.
Moreover, $\mathscr T_p$ is a contractible domain in $A^p(\mathbb U)$ identified with
the $p$-Weil--Petersson Teichm\"uller space $T_p$ for $p > 1$, which provides the complex Banach manifold structure for $T_p$
(see \cite{Cu, Gu, WM-1, Ya}).

We finish our discussion by stating the following theorem.

\begin{theorem}\label{model}
The holomorphic map $\alpha$ restricted to $\mathcal T_p$
is a biholomorphic homeomorphism onto $\mathscr T_p$ for $p > 1$. Hence, the complex Banach manifold structures
on these two models of the
$p$-Weil--Petersson Teichm\"uller space $T_p$ are biholomorphically equivalent. 
\end{theorem}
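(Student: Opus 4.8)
The plan is to produce a holomorphic inverse of $\alpha$ on $\mathscr T_p$ by routing through the Bers embedding, which already equips $T_p(\mathbb L)\cong\mathscr T_p$ with its complex Banach manifold structure (with holomorphic Teichm\"uller projection admitting local holomorphic right inverses). From the discussion preceding the statement, $\alpha\colon\mathcal B_p(\mathbb U)\to A^p(\mathbb U)$ is holomorphic and, by Theorem \ref{Guo11}, restricts to a bijection from $\mathcal T_p$ onto $\mathscr T_p$; the only remaining points are holomorphy of the inverse and openness of $\mathcal T_p$ in $\mathcal B_p(\mathbb U)$.

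For $\mu\in\mathcal M_p(\mathbb L)$ write $g_\mu=G(0,\mu)|_{\mathbb U}$ for the associated normalized conformal map on $\mathbb U$, so that $\mathcal L_{g_\mu}\in\mathcal T_p$, $\mathcal S_{g_\mu}\in\mathscr T_p$, and $\alpha(\mathcal L_{g_\mu})=\mathcal S_{g_\mu}$. First I would check that $\mu\mapsto\mathcal L_{g_\mu}\in\mathcal B_p(\mathbb U)$ is holomorphic: the maps $g_\mu$ and $g_\mu'$ depend holomorphically on $\mu$, locally uniformly on $\mathbb U$, by the holomorphic dependence of the solution of the Beltrami equation on its coefficient, while Theorem \ref{Guo11} makes the assignment locally bounded into $\mathcal B_p(\mathbb U)$, so the coefficient-wise holomorphy upgrades to holomorphy in the $p$-Besov norm; this is exactly the holomorphy already invoked in the proof of Theorem \ref{holo}. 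Moreover $g_\mu$, hence $\mathcal L_{g_\mu}$, is determined by the Teichm\"uller class $[\mu]$. Now fix $\psi_0\in\mathscr T_p$, choose a local holomorphic right inverse $s\colon V\to\mathcal M_p(\mathbb L)$ of the Teichm\"uller projection $\pi\colon\mathcal M_p(\mathbb L)\to T_p(\mathbb L)\cong\mathscr T_p$ near $\psi_0$, and set $\ell|_V=(\mu\mapsto\mathcal L_{g_\mu})\circ s$. Since the right-hand side depends only on $\pi\circ s=\mathrm{id}$, these local maps agree on overlaps and patch to a well-defined holomorphic map $\ell\colon\mathscr T_p\to\mathcal B_p(\mathbb U)$ with image $\mathcal T_p$ and $\alpha\circ\ell=\mathrm{id}_{\mathscr T_p}$.

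Because $\alpha|_{\mathcal T_p}$ is injective, $\alpha\circ\ell=\mathrm{id}$ forces $\ell\circ\alpha=\mathrm{id}_{\mathcal T_p}$, so $\ell=(\alpha|_{\mathcal T_p})^{-1}$ is holomorphic. To see that this is a biholomorphism of Banach domains, and in particular a homeomorphism, it remains to note that $\mathcal T_p$ is open in $\mathcal B_p(\mathbb U)$. Differentiating $\alpha\circ\ell=\mathrm{id}$ shows $d_\varphi\alpha$ has a bounded right inverse at every $\varphi=\ell(\psi)\in\mathcal T_p$. On the other hand $d_\varphi\alpha(\xi)=\xi''-\varphi'\xi'$, and any $\xi$ in its kernel satisfies $\xi'=c\,e^{\varphi}=c\,g_\mu'$, hence $\xi=c\,g_\mu$ modulo constants; but $g_\mu'$ tends to a non-zero constant as $z\to\infty$, so $\iint_{\mathbb U}|g_\mu'|^p y^{p-2}\,dx\,dy=\infty$ and $g_\mu\notin\mathcal B_p(\mathbb U)$, forcing $c=0$. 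Thus $d_\varphi\alpha$ is a bounded linear isomorphism at each point of $\mathcal T_p$, so by the inverse function theorem $\alpha$ (equivalently $\ell$) is a local biholomorphism there; in particular $\mathcal T_p=\ell(\mathscr T_p)$ is open in $\mathcal B_p(\mathbb U)$, and $\alpha|_{\mathcal T_p}$ and $\ell$ are mutually inverse biholomorphisms. This gives the asserted biholomorphic equivalence of the two complex Banach manifold structures on $T_p$.

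The main obstacle I expect is the verification that $\mu\mapsto\mathcal L_{g_\mu}$ is holomorphic as a $\mathcal B_p(\mathbb U)$-valued map, i.e.\ upgrading the evident coefficient-wise holomorphy to holomorphy in the Besov norm; this is where Theorem \ref{Guo11} carries the analytic weight, by supplying the norm bounds, after which the remainder is formal. Were one instead to attempt a proof not routed through the already-established Schwarzian model, the difficulty would shift to solving the linear ODE $\xi''-\varphi'\xi'=\eta$ with $\xi\in\mathcal B_p(\mathbb U)$ for a given $\eta\in A^p(\mathbb U)$, which is the weighted-$L^p$ estimate underlying \cite[Theorem 4.4]{STW} and \cite[Theorem 2.1]{TS}; passing through $\mathscr T_p$ avoids redoing it.
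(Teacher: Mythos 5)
Your proposal is correct and follows essentially the same route as the paper: define $\ell$ on $\mathcal M_p(\mathbb L)$ by $\mu\mapsto\mathcal L_{f_\mu}$, observe that $\alpha\circ\ell$ is the Bers projection $\sigma$, and compose $\ell$ with the local holomorphic right inverses of $\sigma$ to obtain a holomorphic inverse of $\alpha$ on $\mathscr T_p$; the well-definedness/patching step you spell out is implicit in the paper. One caveat on your supplementary openness argument (which the paper does not include): the claim that $g_\mu'$ tends to a non-zero constant as $z\to\infty$ is unjustified, since $\infty$ is a boundary point of $\mathbb U$ and $g_\mu$ is conformal only on $\mathbb U$, so there is no expansion at $\infty$; the conclusion $c=0$ can instead be reached by noting that $\xi\in\mathcal B_p(\mathbb U)\subset\mathcal B(\mathbb U)$ forces $|g_\mu'(z)|\le C/(|c|\,y)$, hence at most logarithmic growth of $g_\mu$ along the imaginary axis, contradicting the H\"older lower bound $|g_\mu(z)|\gtrsim|z|^{1/K}$ for a normalized quasiconformal homeomorphism of $\mathbb C$ fixing $\infty$.
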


\begin{proof}
We have seen that $\alpha:\mathcal T_p \to \mathscr T_p$ is a holomorphic bijection.
For a Beltrami coefficient $\mu$ in $\mathcal M_p(\mathbb L)$, 
we take a conformal map $f_\mu$ on $\mathbb U$ with $f_\mu(\infty)=\infty$ that is
quasiconformally extendable to $\mathbb L$ having the complex dilatation $\mu$.
Then, the map $\ell:\mathcal M_p(\mathbb L) \to \mathcal T_p$ defined by $\mu \mapsto \mathcal L_{f_\mu}$
is continuous at $\mu$
such that $f_\mu$ (or $f^\mu$) is bi-Lipschitz on $\mathbb L$. To see this,
even in the case of $\mathbb L$ for $p > 1$,
the proof of \cite[Theorem 2.4]{TS} in the case of $\mathbb D^*$ for $p \geq 2$ can be applied once we fill the step of showing that 
the Bers projection
$\sigma:\mathcal M_p(\mathbb L) \to \mathscr T_p$ given by $\mu \mapsto \mathcal S_{f_\mu}$ is continuous at $\mu$
such that $f_\mu$ is bi-Lipschitz on $\mathbb L$.
This is shown in \cite[Lemma 3.2]{WM-1}.

Moreover, as the composition 
$\alpha\circ \ell$ coincides with 
$\sigma$, at any point $\psi \in \mathscr T_p$,
there is a local continuous right inverse $s$ of $\alpha\circ \ell$ satisfying that
$s(\psi)$ is an arbitrarily given $\mu \in \mathcal M_p(\mathbb L)$ such that $f_\mu$ is bi-Lipschitz on $\mathbb L$ 
(see \cite[Theorem 2.1]{TS}, \cite[Proposition 4.3]{Ya}, and \cite[Theorem 4.1]{WM-1}).
It follows that $\ell \circ s$ becomes a local continuous right inverse of $\alpha$ at $\psi$,
from which we see that $\alpha^{-1}$ is continuous.
By a well-known argument in this situation, the holomorphy of $\alpha^{-1}:\mathscr T_p \to \mathcal T_p$
follows from its continuity. Thus, $\alpha$ is biholomorphic.
\end{proof}

%\section*{Statements and Declarations}
%\noindent\textbf{Conflict of interest} On behalf of all authors, the corresponding author states that there is no conflict of interest regarding the publication of this paper. \textbf{Data Availability} This paper has no associated data. 
%

\end{document}